\newtheorem{theorem}{Theorem}[section]
\newtheorem{proposition}[theorem]{Proposition}
\newtheorem{lemma}[theorem]{Lemma}
\newtheorem{corollary}[theorem]{Corollary}
\theoremstyle{definition}
\newtheorem{definition}{Definition}
\newtheorem{remark}[theorem]{Remark}
\renewenvironment{description}{\begin{enumerate}}{\end{enumerate}}
\def\index#1{}
\begin{document}

\begin{frontmatter}
\pretitle{Research Article}

\title{Irregular barrier reflected BDSDEs with general jumps under stochastic Lipschitz and linear growth conditions}

\author[a]{\inits{M.}\fnms{Mohamed}~\snm{Marzougue}\thanksref{cor1}\ead[label=e1]{mohamed.marzougue@edu.uiz.ac.ma}\orcid{0000-0002-0014-1458}}
\thankstext[type=corresp,id=cor1]{Corresponding author.}
\author[b]{\inits{Y.}\fnms{Yaya}~\snm{Sagna}\ead[label=e2]{sagnayaya88@gmail.com}}
\address[a]{Laboratory of Analysis and Applied Mathematics (LAMA), Faculty of sciences Agadir, \institution{Ibn Zohr University},  80000, Agadir, \cny{Morocco}}
\address[b]{LERSTAD, UFR de Sciences Appliqu\'{e}es et de Technologie, \institution{Universit\'{e} Gaston Berger}, BP 234, Saint-Louis, \cny{Senegal}}

\runtitle{Irregular barrier RBDSDEJs under stochastic Lipschitz and linear growth conditions}


\begin{abstract}
In this paper, a solution is given to reflected backward doubly stochastic
differential equations when the barrier is not necessarily right-continuous,
and the noise is driven by  two independent Brownian motions and an independent
Poisson random measure. The existence and uniqueness of the solution is shown,
firstly when the coefficients are stochastic Lipschitz, and secondly by
weakening the conditions on the stochastic growth coefficient.
\end{abstract}
\begin{keywords}
\kwd{Reflected backward doubly stochastic differential equations}
\kwd{irregular barrier}
\kwd{Mertens decomposition}
\kwd{stochastic Lipschitz condition}
\kwd{stochastic linear growth condition}
\end{keywords}
\begin{keywords}[MSC2010]%
\kwd{60H20}
\kwd{60H30}
\end{keywords}

\received{\sday{6} \smonth{3} \syear{2020}}
\revised{\sday{5} \smonth{5} \syear{2020}}
\accepted{\sday{29} \smonth{5} \syear{2020}}
\publishedonline{\sday{10} \smonth{6} \syear{2020}}

\end{frontmatter}
%

\section{Introduction}

Nonlinear backward stochastic differential equations (BSDEs\index{BSDEs} in short) were
first introduced by Pardoux and Peng \cite{Par-Peng90} with the uniform Lipschitz
condition under which they proved the celebrated existence and uniqueness
result. Since then, the theory of BSDEs\index{BSDEs} has\vadjust{\goodbreak} been intensively developed
in the last years. The great interest in this theory comes from its connections
with many other fields of research, such as mathematical finance
\cite{EPeQ:1997,EQ:1997}, stochastic control and stochastic games
\cite{EH:2003} and partial differential equations \cite{Par-Peng92}. After
Pardoux and Peng introduced the theory of \mbox{BSDEs},\index{BSDEs}
they considered \cite{Par-Peng94} a new kind of BSDEs,\index{BSDEs} that is a class of backward
doubly stochastic differential equations (BDSDEs\index{BDSDEs} in short) with two different
directions of stochastic integrals with respect to two independent Brownian
motions. They proved the existence and uniqueness of solutions to BDSDEs\index{BDSDEs}
under uniform Lipschitz conditions on the coefficients.

In the setting of reflected BSDEs (resp. BDSDEs\index{BDSDEs}), an additional nondecreasing
process is added in order to keep the solution above a certain lower-boundary
process, called barrier (or obstacle), and to do this in a minimal fashion.
The reflected BSDEs (RBSDEs\index{RBSDEs} in short) were introduced by El Karoui et al.
\cite{EKPPQ:1997}, 
again under the uniform Lipschitz condition on the coefficients.
The authors of \cite{EKPPQ:1997} proved the existence and uniqueness results in the case of
a Brownian filtration and a continuous barrier. The reflected BDSDEs (RBDSDEs\index{RBDSDEs}
in short) were introduced by Bahlali et al. \cite{bahlali} 
where the authors
studied the case of RBDSDEs\index{RBDSDEs} with continuous coefficients, and 
proved the existence and uniqueness of the solution.

To the best of our knowledge, the paper by Grigorova et al. \cite{GIOOQ:2017} is the
first 
one which studied RBSDEs in the case where the barrier is not necessarily
right-continuous (just right upper semi-continuous). The authors of \cite{GIOOQ:2017} studied
the existence and uniqueness result under the Lipschitz assumption on the
coefficients in a filtration that supports a Brownian motion and an independent
Poisson random measure.\index{independent Poisson random measure} Later, several authors have studied the RBSDEs\index{RBSDEs}
following 
Grigorova et al. \cite{GIOOQ:2017} (see e.g.
\cite{Akdim:2019,BO:2017,BO:2018,KRS:2019,M:2020,ME:2019}). Recently, Berrhazi
et al. \cite{berrhazi:2019} discussed the case of RBDSDE with a right upper
semi-continuous barrier under Lipschitz coefficients.\index{Lipschitz coefficients}

Our aim in this paper is to extend the work on RBDSDEs\index{RBDSDEs} with jumps (RBDSDEJs\index{RBDSDEJs}
in short) to the case of an irregular barrier\index{irregular barrier} (which is assumed to be not necessarily
right-continuous). The specificity of such equations lies in the fact that
the two independent Brownian motions are coupled with an independent Poisson
random measure.\index{independent Poisson random measure} We'll prove the existence and uniqueness of the solution
to such equations under the so-called stochastic Lipschitz coefficients.\index{Lipschitz coefficients}
The interest in this last condition is based on the fact that, unfortunately, in many applications,
the usual Lipschitz conditions cannot be satisfied. For example, the pricing
of the American claim is equivalent to solving the linear RBDSE
%
\begin{equation}
\label{price}
\left \{
\begin{array}[c]{ll}%
\displaystyle -dV_{t}=(r_{t}V_{t}+\theta _{t}Z_{t})dt-Z_{t}dW_{t}+dK_{t},
\quad V_{T}=\xi _{T};
\\
V_{t}\geq \xi _{t},\quad (V_{t}-\xi _{t})dK_{t}=0\quad \text{a.s. }&
\end{array}
\right .
\end{equation}
where $\xi _{t}$ is the amount received from the seller at time $t$,
$r_{t}$ is the interest rate process and $\theta _{t}$ is the risk premium
process. The additional process $K$ is needed for this problem because
there exists no replicating strategy for the option. We have to use a super-replicating
strategy with a consumption process $K$. The minimality condition\index{minimality condition} on
$K$ just states that we only invest money in the portfolio when
$V_{t}>\xi _{t}$. Here both $r_{t}$ and $\theta _{t}$ are not bounded in
general. So, it is not possible to solve the RBSDE \eqref{price} by the
result of El Karoui et al. \cite{EKPPQ:1997}. Thus, in order to study more
general RBSDEs\index{RBSDEs} (resp. RBDSDEs\index{RBDSDEs}), one needs to relax the uniform Lipschitz
conditions on the coefficients. 
To this direction, several attempts have been
done. Among others, we refer to
\cite{Bah-Elo-N'zi-1,Bah-Elo-N'zi-2,EE:2020,GIOQ:2019,ME:2017,ME:2018,ME:2019,ME:2020}
for the case of BSDEs\index{BSDEs}, and \cite{hu,owo,owo18,sow-sagna} 
for BDSDEs.\index{BDSDEs}

In our paper, we use a generalization of the Doob--Meyer decomposition
called the Mertens decomposition.\index{Mertens decomposition} This decomposition is used for strong optional
supermartingales\index{optional supermartingale} which are not necessarily right-continuous. We also use
some tools from the optimal stopping theory, as well as a generalization
of the It\^{o} formula to the case of a strong optional supermartingale\index{optional supermartingale}
called the Gal'chouk--Lenglart formula due to Lenglart \cite{Len:1980}.

The paper is organized as follows. In Section \ref{s1}, we give some notations,
assumptions and main contributions needed in this paper. In Section \ref{s2}, we prove the existence and uniqueness of the solution to RBDSDEJs\index{RBDSDEJs}
with a stochastic Lipschitz coefficients $(f,g)$\index{Lipschitz coefficients} and an irregular barrier
$\xi $,\index{irregular barrier} and we also give a comparison theorem for solutions. Section \ref{s3} is devoted to prove the existence of a minimal solution\index{minimal solution} to RBDSDEJs\index{RBDSDEJs}
under a stochastic growth coefficient $f$.

\section{Definitions and preliminary results}
\label{s1}

Let $0< T < +\infty $ be a non-random horizon time, $\Omega $ be a non-empty
set, ${\mathscr{F}}$ be a $\sigma $-algebra of sets of $\Omega $ and
${\mathbf{P}}$ be a probability measure defined on $\mathscr{F}$. The triple
$(\Omega ,\mathscr{F},{\mathbf{P}})$ defines a probability space which is assumed
to be complete. We assume there are three mutually independent processes:
\begin{itemize}
\item a $d$-dimensional Brownian motion
$(W_{t})_{t\le T}$,
\item a $\ell $-dimensional Brownian motion
$(B_{t})_{t\le T}$,
\item a random Poisson measure $\mu $ on
$E \times \mathbf{R}_{+}$ with compensator
$\nu (dt, de) =\break  \lambda (de) dt$, where the space
$E = \mathbf{R}^{\ell }-\{0\}$ is equipped with its Borel field
${\mathcal{E}}$ such that
$\{\widetilde{\mu }([0, t]\times \mathcal{B})= (\mu - \nu )[0, t]
\times \mathcal{B} \}$ is a martingale for any
$\mathcal{B}\in {\mathcal{E}}$ satisfying
$\lambda (\mathcal{B})<\infty $. $\lambda $ is a $\sigma $-finite measure
on ${\mathcal{E}}$ and satisfies
\begin{equation*}
\int _{E} (1\wedge |e|^{2}) \lambda (de) < \infty .
\end{equation*}
\end{itemize}

We consider the family $({\mathscr{F}}_{t})_{t \le T}$ given by
\begin{equation*}
{\mathscr{F}}_{t} = {\mathscr{F}}^{W}_{t} \vee {\mathscr{F}}^{B}_{t,T}
\vee {\mathscr{F}}^{\mu }_{t},\quad  0\leq t\leq T,
\end{equation*}
where for any process
$(\eta _{t})_{t\le T}, \; {\mathscr{F}}^{\eta }_{s,t} = \sigma \{\eta _{r}-
\eta _{s}, \; s\le r\le t\}\vee {\mathcal{N}}, \; \; {\mathscr{F}}^{\eta }_{t}
= {\mathscr{F}}^{\eta }_{0,t} $. Here $\mathcal{N}$ denotes the class of
${\mathbf{P}}$-null sets of $\mathscr{F}$. Note that the family
$({\mathscr{F}}_{t})_{t \le T}$ does not constitute a classical filtration.

For an integer $k \ge 1$, $ | \;.\; |$ and
$\left \langle .,.\right \rangle $ stand for the Euclidian norm and the
inner product in $\mathbf{R}^{k}$, $\mathcal{T}_{[t,T]}$ denotes the set
of stopping times $\tau $ such that $\tau \in [t,T]$ and
${\mathscr{P}}$ denotes the $\sigma $-algebra of ${\mathscr{F}}_{t}$-predictable
sets of $\Omega \times [0,T]$.

For every ${\mathscr{F}}_{t}$-measurable process $(a_{t})_{t\leq T}$, we
define an increasing process\break  $(A_{t})_{t\leq T}$ by setting
$A_{t} = \int _{0}^{t}a^{2}_{s}ds$.

For every $\beta > 0$, we consider the following sets (where
${\mathbf{E}}$ denotes the mathematical expectation with respect to the probability
measure ${\mathbf{P}}$):
\begin{itemize}
\item $\displaystyle {\mathscr{S}}^{2}(\mathbf{R}^{k})$ and
$\displaystyle {\mathscr{S}}^{2}_{\beta }(\mathbf{R}^{k})$ are the spaces of
${\mathscr{F}}_{t} $-adapted optional processes
$\Psi : \Omega \times\break  [0, T] \longrightarrow \mathbf{R}^{k}$ which satisfy,
respectively,
\begin{align*}
\left \Vert \Psi \right \Vert ^{2}_{{\mathscr{S}}^{2} ( \mathbf{R}^{k})}
= {\mathbf{E}}\left (\operatorname*{ess\,sup}_{\tau \in \mathcal{T}_{[0,T]}}
|\Psi _{\tau }|^{2} \right ) < + \infty
\end{align*}
and
\begin{align*}
\left \Vert \Psi \right \Vert ^{2}_{{\mathscr{S}}^{2}_{\beta }(
\mathbf{R}^{k})} = {\mathbf{E}}\left (\operatorname*{ess\,sup}_{\tau
\in \mathcal{T}_{[0,T]}} e^{\beta A_{\tau }}|\Psi _{\tau }|^{2} \right ) <
+ \infty .
\end{align*}
\item
$\displaystyle \mathscr{M}^{2} (\mathbf{R}^{k\times d}) $,
$\displaystyle \mathscr{M}^{2}_{\beta }(\mathbf{R}^{k\times d})$ and
$\displaystyle \mathscr{M}^{2,a}_{\beta }(\mathbf{R}^{k})$ are the spaces of
${\mathscr{F}}_{t}$-progressively measurable processes
$\Psi : \Omega \times [0, T] \longrightarrow \mathbf{R}^{k\times d}$ (resp.
$\mathbf{R}^{k}$)  which satisfy, respectively,
\begin{gather*}
\left \Vert \Psi \right \Vert ^{2}_{\mathscr{M}^{2} (\mathbf{R}^{k
\times d})} = {\mathbf{E}}\left ( \int _{0}^{T} |\Psi _{t}|^{2} \, dt
\right ) < + \infty ,
\\
\left \Vert \Psi \right \Vert ^{2}_{\mathscr{M}^{2}_{\beta }(
\mathbf{R}^{k\times d})} = {\mathbf{E}}\left ( \int _{0}^{T} e^{\beta A_{t}}|
\Psi _{t}|^{2} \, dt \right ) < + \infty
\end{gather*}
and
\begin{align*}
\left \Vert  \Psi \right \Vert ^{2}_{\mathscr{M}^{2,a}_{\beta }(
\mathbf{R}^{k})} = {\mathbf{E}}\left (\int _{0}^{T} e^{\beta A_{t}}a^{2}_{t}|
\Psi _{t}|^{2} \, dt\right ) < + \infty .
\end{align*}
\item $\mathscr{L}_{\lambda }$ is the space of
${\mathscr{P}} \otimes {\mathcal{E}}$-measurable mappings
$U: E \longrightarrow \mathbf{R}^{k}$ such that
\begin{align*}
\left \Vert  U \right \Vert ^{2}_{\lambda }=\int _{E}|U(e)|^{2}
\lambda (de) < +\infty .
\end{align*}
\item $ \mathscr{L}^{2}_{\beta }(\mathbf{R}^{k})$ is the space
of ${\mathscr{P}} \otimes {\mathcal{E}}$-measurable processes
$ U : \Omega \times [0, T] \times E \longrightarrow \mathbf{R}^{k}$ such
that
\begin{equation*}
\left \Vert U \right \Vert ^{2}_{\mathscr{L}^{2}_{\beta }(\mathbf{R}^{k})}
= {\mathbf{E}}\left (\int _{0}^{T} e^{\beta A_{t}} \left \Vert  U_{t}
\right \Vert ^{2}_{\lambda }dt\right ) < + \infty .
\end{equation*}
\end{itemize}

Notice that the space
\begin{equation*}
\mathcal{A}^{2}_{\beta }(\mathbf{R}^{k})=\mathscr{M}^{2,a}_{\beta }(
\mathbf{R}^{k})\times \mathscr{M}^{2}_{\beta }(\mathbf{R}^{k\times d})
\times \mathscr{L}^{2}_{\beta }(\mathbf{R}^{k})
\end{equation*}
endowed with the norm
\begin{equation*}
\left \Vert (Y, Z, U) \right \Vert ^{2}_{\mathcal{A}^{2}_{\beta }(
\mathbf{R}^{k})}=\left \Vert  Y \right \Vert _{\mathscr{M}^{2,a}_{\beta }(\mathbf{R}^{k})}^{2} + \left \Vert Z \right \Vert _{
\mathscr{M}^{2}_{\beta }(\mathbf{R}^{k\times d})}^{2} + \left \Vert U
\right \Vert _{\mathscr{L}^{2}_{\beta }(\mathbf{R}^{k})}^{2}
\end{equation*}
is a Banach space as is the space
\begin{equation*}
\mathscr{B}^{2}_{\beta }(\mathbf{R}^{k})=(\mathscr{M}^{2,a}_{\beta }(
\mathbf{R}^{k})\cap {\mathscr{S}}^{2}_{\beta }(\mathbf{R}^{k}))
\times \mathscr{M}^{2}_{\beta }(\mathbf{R}^{k\times d})\times
\mathscr{L}^{2}_{\beta }(\mathbf{R}^{k})
\end{equation*}
with the norm
\begin{equation*}
\left \Vert (Y, Z, U) \right \Vert ^{2}_{\mathscr{B}^{2}_{\beta }(
\mathbf{R}^{k})} =\left \Vert  Y \right \Vert _{{\mathscr{S}}^{2}_{\beta }(\mathbf{R}^{k})}^{2} + \left \Vert  Y \right \Vert _{
\mathscr{M}^{2,a}_{\beta }(\mathbf{R}^{k})}^{2} + \left \Vert Z
\right \Vert _{\mathscr{M}^{2}_{\beta }(\mathbf{R}^{k\times d})}^{2} +
\left \Vert U \right \Vert _{\mathscr{L}^{2}_{\beta }( \mathbf{R}^{k})
}^{2} .\vadjust{\goodbreak}
\end{equation*}

For a l\`{a}dl\`{a}g (limited from right and left) process
$(Y_{t})_{t\leq T}$, we denote by:
\begin{itemize}
\item $Y_{t-}=\lim \limits _{s\nearrow t}Y_{s}$ the left-hand limit of
$Y$ at $t\in [0,T]$, $(Y_{0-}=Y_{0})$, $Y_{-}:=(Y_{t-})_{t\leq T}$ and
$\Delta Y_{t}:=Y_{t}-Y_{t-}$ the size of the left jump of $Y$ at $t$.
\item $Y_{t+}=\lim \limits _{s\searrow t}Y_{s}$ the right-hand limit of
$Y$ at $t\in [0,T]$, $(Y_{T+}=Y_{T})$, $Y_{+}:=(Y_{t+})_{t\leq T}$ and
$\Delta _{+}Y_{t}:=Y_{t+}-Y_{t}$ the size of the right jump of $Y$ at
$t$.
\end{itemize}

Let
$f : \Omega \times [0, T]\times \mathbf{R}^{k}\times \mathbf{R}^{k
\times d}\times \mathbf{R}^{k} \to \mathbf{R}^{k}$,
$\, g : \Omega \times [0, T] \times \mathbf{R}^{k}\times \mathbf{R}^{k
\times d}\times \mathbf{R}^{k} \to \mathbf{R}^{k\times \ell } $, and
$(\xi _{t})_{t\leq T}$ be an optional process which is assumed to be right
upper semi-continuous and limited from left. The process $(\xi _{t})_{t\leq T}$ will be
called \textbf{irregular barrier}. We are interested in the following RBDSDEJs
associated with parameters $(f,g,\xi )$:
%
\begin{equation}
\label{backw}
\left \{
\begin{array}{ll}
Y_{\tau }=\xi _{T} +\displaystyle \int _{\tau }^{T} f(s,\Theta _{s})ds+
\int _{\tau }^{T} g(s,\Theta _{s})dB_{s}-\int _{\tau }^{T}Z_{s}dW_{s}&
\text{}
\\
\hspace{0.5cm}
-\displaystyle \int _{\tau }^{T}\int _{E}U_{s}(e)\widetilde{\mu }(ds,de)+K_{T}-K_{
\tau }+C_{T-}-C_{\tau -}\quad \tau \in \mathcal{T}_{[0,T]},& \text{}
\\
Y_{\tau }\geq \xi _{\tau }\quad \forall \tau \in \mathcal{T}_{[0,T]},&
\text{}
\\
K=K^{c}+K^{d} \text{ (continuous + purely discontinuous part) is a}&
\text{}
\\
\text{nondecreasing right-continuous predictable process with}& \text{}
\\
K_{0}=0 \text{ such that }& \text{}
\\
\displaystyle \int _{0}^{T}\mathbh{1}_{\{Y_{t}>\xi _{t}\}}dK^{c}_{t}=0
\ \text{a.s. and } (Y_{\tau -}-\xi _{\tau -})\Delta K^{d}_{\tau }=0 \ \text{a.s.}
\quad \forall \tau \in \mathcal{T}^p_{[0,T]},& \text{}
\\
C \text{ is a nondecreasing right-continuous predictable purely}&
\text{}
\\
\text{discontinuous process with }C_{0-}=0 \text{ such that }& \text{}
\\
(Y_{\tau }-\xi _{\tau })\Delta C_{\tau }=0 \ \text{a.s.} \quad \forall \tau
\in \mathcal{T}_{[0,T]}. & \text{}
\end{array}
\right .
\end{equation}
Here $\Theta _{s}$ stands for the triple $(Y_{s}, Z_{s}, U_{s})$.

Let us consider the filtration $(\mathcal{G}_{t})_{t\leq T}$ given by
$\mathcal{G}_{t} = \mathscr{F}_{t}^{W}\vee \mathscr{F}_{T}^{B}\vee
\mathscr{F}_{t}^{\mu },\,\,\, 0 \leq t\leq T$ which is assumed to be right-continuous
and quasi-left-continuous, and 
make precise the notion of solution
to RBDSDEJ \eqref{backw}.
%
\begin{definition}
Let $\xi $ be an irregular barrier.\index{irregular barrier} A process $(Y,Z,U,K,C)$ is called a
solution to RBDSDEJ associated with parameters $(f,g,\xi )$, if it satisfies
the system \eqref{backw} and
\begin{itemize}
\item $(Y,Z,U)\in \mathscr{B}^{2}_{\beta }(\mathbf{R}^{k})$,
\item
$(K,C)\in \mathscr{S}^{2}(\mathbf{R}^{k})\times \mathscr{S}^{2}(
\mathbf{R}^{k})$.
\end{itemize}
\end{definition}

\begin{remark}
We note that a process
$(Y,Z,U,K,C)\in \mathscr{B}^{2}_{\beta }(\mathbf{R}^{k})\times
\mathscr{S}^{2}(\mathbf{R}^{k})\times \mathscr{S}^{2}(\mathbf{R}^{k})$
satisfies the equation \eqref{backw} if and only if
\begin{eqnarray*}
Y_{t}&=&\xi _{T} +\displaystyle \int _{t}^{T} f(s,\Theta _{s})ds+
\int _{t}^{T} g(s,\Theta _{s})dB_{s}-\int _{t}^{T}Z_{s}dW_{s}
\\
&&-\int _{t}^{T}\int _{E}U_{s}(e)\widetilde{\mu }(ds,de)+K_{T}-K_{t}+C_{T-}-C_{t-}.
\end{eqnarray*}
\end{remark}

\begin{remark}
If $(Y,Z,U,K,C)$ is a solution to RBDSDEJ \eqref{backw}, then
$\Delta C_{t}=Y_{t}- Y_{t+}$ for all $t\leq T$ outside an evanescent set.
It follows that $Y_{t}\geq Y_{t+}$ for all $t\leq T$, which implies that
$Y$ is necessarily right upper semi-continuous. Moreover, the process
$\left (Y_{t}+\int _{0}^{t}f(s,\Theta _{s})ds\right )_{t\leq T}$ is a strong
supermartingale. Actually, by using H\"{o}lder's inequality and the stochastic
Lipschitz condition on $f$ (below), we obtain, for each
$\tau \in \mathcal{T}_{[0,T]}$,
\begin{eqnarray*}
&&\mathbb{E}\left |Y_{\tau }+\int _{0}^{\tau }f(s,\Theta _{s})ds\right |^{2}
\\
&\leq &2\left (\mathbb{E}\left |Y_{\tau }\right |^{2}+\frac{1}{\beta }
\mathbb{E}\int _{0}^{T} e^{\beta A_{s}}\left |
\frac{f(s,\Theta _{s})}{a_{s}}\right |^{2}ds\right )
\\
&\leq & 2\left (\|Y\|_{{\mathscr{S}}^{2}_{\beta }(\mathbf{R}^{k})}^{2}+
\frac{4}{\beta }\|Y\|_{\mathscr{M}^{2,a}_{\beta }(\mathbf{R}^{k})}^{2}+
\frac{4}{\beta }\|Z\|_{\mathscr{M}^{2}_{\beta }(\mathbf{R}^{k\times d})}^{2}+
\frac{4}{\beta }\|U\|_{\mathscr{L}^{2}_{\beta }(\mathbf{R}^{k})}^{2}
\right .
\\
&&\left .+\frac{4}{\beta }\left \|  \frac{f(.,0)}{a}\right \|  _{
\mathscr{M}^{2}_{\beta }(\mathbf{R}^{k})}^{2}\right )<+\infty .
\end{eqnarray*}
Moreover, for all $\tau ,\nu \in \mathcal{T}_{[0,T]}$ with
$\nu \leq \tau $ we have
\begin{equation*}
\mathbb{E}\left [Y_{\tau }-Y_{\nu }-\int _{\nu }^{\tau }f(s,\Theta _{s})ds|
\mathcal{G}_{\nu }\right ]=\mathbb{E}\left [K_{\nu }-K_{\tau }+C_{\nu ^{-}}-C_{
\tau ^{-}}|\mathcal{G}_{\nu }\right ]\;\; a.s.
\end{equation*}
Since $K$ and $C$ are nondecreasing processes, and
$ \left( Y_{t}+\int _{0}^{t}f(s,\Theta _{s})ds \right)_{t\leq T}$ is a
$\mathcal{G}_{t}$-adapted process then the observation follows.
\end{remark}

\begin{remark}%
\label{rem02}
In our framework the filtration is quasi-left-continuous, martingales have
only totally inaccessible jumps\index{totally inaccessible jumps} and $Y$ has two type of left-jumps: totally
inaccessible jumps\index{totally inaccessible jumps} which stem from stochastic integral with respect to
$\widetilde{\mu }$, and predictable jumps\index{predictable jumps} which come from the predictable
jumps\index{predictable jumps} of the irregular barrier\index{irregular barrier} $\xi $. The latter are the source of the
predictability of $K$. Moreover, the processes $K$ and $\mu $ do not have
jumps in common.
\end{remark}

\begin{remark}[The particular case of a right-continuous barrier]
If the barrier $\xi $ is right-continuous, we have
$Y_{t}\geq Y_{t+}\geq \xi _{t+}=\xi _{t}$. Hence, if $t$ is such that
$Y_{t}=\xi _{t}$, then $Y_{t}=Y_{t+}=\xi _{t}$. If $t$ is such that
$Y_{t}>\xi _{t}$, then by the minimality condition\index{minimality condition} on $C$,
$Y_{t}-Y_{t+}=C_{t}-C_{t-}=0$. Thus, in both cases, $Y_{t}=Y_{t+}$, so
$Y$ is right-continuous. Moreover, the right-continuity of $Y$ combined
with the fact that $\Delta C_{t}=Y_{t}- Y_{t+}$ give $C_{t}=C_{t-}$ for
all $t\leq T$. As $C$ is right-continuous, purely discontinuous and such
that $C_{0-}=0$, we deduce $C=0$. Thus, we recover the usual formulation
of RBDSDEJs\index{RBDSDEJs} with a right-continuous barrier.
\end{remark}

\begin{proposition}%
\label{prop}
Let $(Y,Z,U)\in \mathscr{B}^{2}_{\beta }(\mathbf{R}^{k})$ with $Y$ 
being a l\`{a}dl\`{a}g process, and let a coefficient
$g(.)\in \mathscr{M}^{2}_{\beta }(\mathbf{R}^{k\times \ell })$. Then
\begin{equation*}
\left (\displaystyle \int _{0}^{t} e^{\beta A_{s}}\left (\langle Y_{s-},Z_{s}dW_{s}
\rangle + \langle Y_{s-},g(s)dB_{s}\rangle + \int _{E} \langle Y_{s-},U_{s}(e)
\widetilde{\mu }(ds,de)\rangle \right )\right )_{t\leq T}
\end{equation*}
is a martingale.
\end{proposition}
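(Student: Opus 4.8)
The plan is to handle the three stochastic integrals
\[N^1_t=\int_0^t e^{\beta A_s}\langle Y_{s-},Z_s\,dW_s\rangle,\qquad N^2_t=\int_0^t e^{\beta A_s}\langle Y_{s-},g(s)\,dB_s\rangle,\]
\[N^3_t=\int_0^t\int_E e^{\beta A_s}\langle Y_{s-},U_s(e)\rangle\,\widetilde\mu(ds,de)\]
one at a time, showing that each is a uniformly integrable martingale on $[0,T]$; the assertion then follows since the sum $N^1+N^2+N^3$ is a martingale. Each $N^i$ is, to begin with, only a local martingale: the integrand $e^{\beta A_s}Y_{s-}$ is predictable (the process $A$ is continuous and adapted, and the left-limit process $Y_-$ of the làdlàg process $Y$ is left-continuous and adapted, hence predictable), while $W$, $B$ and $\widetilde\mu$ are locally square-integrable martingales for $(\mathcal G_t)_{t\le T}$. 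So the real work is to upgrade each $N^i$ to a true martingale, and for this I would use the Burkholder--Davis--Gundy inequality together with the integrability built into $(Y,Z,U)\in\mathscr B^2_\beta(\mathbf R^k)$ and $g(\cdot)\in\mathscr M^2_\beta(\mathbf R^{k\times\ell})$.

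Take $N^1$ as the model case. Since $W$ is continuous, $[N^1]_T=\langle N^1\rangle_T$, and by the Cauchy--Schwarz inequality $d\langle N^1\rangle_s\le e^{2\beta A_s}|Y_{s-}|^2|Z_s|^2\,ds$. Splitting $e^{2\beta A_s}=e^{\beta A_s}\cdot e^{\beta A_s}$ and absorbing the first factor into $|Y_{s-}|^2$ gives
\[[N^1]_T\le\Big(\operatorname*{ess\,sup}_{\tau\in\mathcal T_{[0,T]}}e^{\beta A_\tau}|Y_\tau|^2\Big)\int_0^T e^{\beta A_s}|Z_s|^2\,ds,\]
using $e^{\beta A_s}|Y_{s-}|^2\le\operatorname*{ess\,sup}_{\tau\in\mathcal T_{[0,T]}}e^{\beta A_\tau}|Y_\tau|^2$ (valid up to a modification, $A$ being continuous). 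Taking square roots and then applying $ab\le\frac12 a^2+\frac12 b^2$ yields $\mathbf E\big([N^1]_T^{1/2}\big)\le\frac12\|Y\|^2_{\mathscr S^2_\beta(\mathbf R^k)}+\frac12\|Z\|^2_{\mathscr M^2_\beta(\mathbf R^{k\times d})}<\infty$. By BDG, $\mathbf E\big(\sup_{t\le T}|N^1_t|\big)<\infty$; and a local martingale on $[0,T]$ whose running supremum is integrable is a uniformly integrable martingale (stop along a localizing sequence and pass to the limit by dominated convergence). Hence $N^1$ is a martingale.

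For $N^2$ the argument is the same word for word, with $Z$ replaced by $g(\cdot)$. For $N^3$, using $\langle Y_{s-},U_s(e)\rangle^2\le|Y_{s-}|^2|U_s(e)|^2$ and compensating $\mu$ into $\nu(ds,de)=\lambda(de)\,ds$ on the nonnegative predictable integrand,
\[[N^3]_T\le\Big(\operatorname*{ess\,sup}_{\tau\in\mathcal T_{[0,T]}}e^{\beta A_\tau}|Y_\tau|^2\Big)\int_0^T\int_E e^{\beta A_s}|U_s(e)|^2\,\mu(ds,de),\]
and since $\mathbf E\int_0^T\int_E e^{\beta A_s}|U_s(e)|^2\,\mu(ds,de)=\mathbf E\int_0^T e^{\beta A_s}\|U_s\|^2_\lambda\,ds=\|U\|^2_{\mathscr L^2_\beta(\mathbf R^k)}$, the same square-root and $ab\le\frac12a^2+\frac12b^2$ steps give $\mathbf E\big([N^3]_T^{1/2}\big)\le\frac12\|Y\|^2_{\mathscr S^2_\beta(\mathbf R^k)}+\frac12\|U\|^2_{\mathscr L^2_\beta(\mathbf R^k)}<\infty$, and BDG concludes as before. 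The step needing the most care — the one I would flag as the crux — is the handling of the weight $e^{\beta A_s}$: it is not bounded in general, since $A_T=\int_0^T a_s^2\,ds$ need not be bounded, so it cannot simply be pulled out; the splitting $e^{2\beta A_s}=e^{\beta A_s}\cdot e^{\beta A_s}$, with one factor going onto $Y_-$ (controlled by the $\mathscr S^2_\beta$-norm) and the other onto $Z$, $g$, $U$ (controlled by the $\mathscr M^2_\beta$- resp. $\mathscr L^2_\beta$-norm), is precisely what keeps the BDG bounds finite.
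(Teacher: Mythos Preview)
Your proof is correct and follows essentially the same route as the paper: both split the weight as $e^{2\beta A_s}=e^{\beta A_s}\cdot e^{\beta A_s}$, pull out the factor $\operatorname*{ess\,sup}_{\tau}e^{\beta A_\tau}|Y_\tau|^2$, apply $ab\le\tfrac12 a^2+\tfrac12 b^2$ after the square root, and conclude via a BDG-type bound on $\mathbf E[N^i]_T^{1/2}$. Your write-up is in fact a bit more explicit than the paper's about invoking BDG and the passage from local to true martingale; conversely, the paper spells out more carefully why $e^{\beta A_s}|Y_{s-}|^2$ is dominated by the essential supremum over stopping times (via left-continuity and a countable-supremum argument), a point you gloss over with ``valid up to a modification''.
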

\begin{proof}
Using the left-continuity of trajectories of the process $Y_{s-}$, we
have
\begin{equation*}
|Y_{s-}(\omega )|^{2}\leq \sup _{t\in [0,T]\cap \mathbb{Q}}|Y_{t-}(
\omega )|^{2} \quad \forall (s,\omega )\in [0,T]\times \Omega .
\end{equation*}
On the other hand, we have
$|Y_{t-}|^{2}\leq \operatorname*{ess
\,sup}\limits _{\tau \in \mathcal{T}_{[0,T]}}|Y_{\tau }|^{2}$ which implies
\begin{equation*}
\sup \limits _{t\in [0,T]\cap \mathbb{Q}}|Y_{t-}|^{2}\leq
\operatorname*{ess
\,sup}\limits _{\tau \in \mathcal{T}_{[0,T]}}|Y_{\tau }|^{2}.
\end{equation*}
Then for all $\tau \leq T$
\begin{eqnarray*}
\int _{0}^{\tau }e^{2\beta A_{s}}|Y_{s-}|^{2}|Z_{s}|^{2}ds&\leq &\int _{0}^{\tau }e^{2\beta A_{s}}\sup _{t\in [0,T]\cap \mathbb{Q}}|Y_{t-}|^{2}|Z_{s}|^{2}ds
\\
&\leq &\int _{0}^{\tau }e^{2\beta A_{s}} \operatorname*{ess
\,sup}_{\tau \in \mathcal{T}_{[0,T]}}|Y_{\tau }|^{2}|Z_{s}|^{2}ds.
\end{eqnarray*}
Further, we have
\begin{equation*}
\int _{0}^{\tau }e^{2\beta A_{s}} \operatorname*{ess
\,sup}_{\tau \in \mathcal{T}_{[0,T]}}|Y_{\tau }|^{2}|Z_{s}|^{2}ds
\leq \operatorname*{ess
\,sup}_{\tau \in \mathcal{T}_{[0,T]}}e^{\beta A_{\tau }}|Y_{\tau }|^{2}
\int _{0}^{\tau }e^{\beta A_{s}}|Z_{s}|^{2}ds.
\end{equation*}
Hence
\begin{eqnarray*}
{\mathbf{E}}\sqrt{\int _{0}^{\tau }e^{2\beta A_{s}}|Y_{s-}|^{2}|Z_{s}|^{2}ds}&
\leq & {\mathbf{E}}\sqrt{\operatorname*{ess
\,sup}_{\tau \in \mathcal{T}_{[0,T]}}e^{\beta A_{\tau }}|Y_{\tau }|^{2}
\int _{0}^{T}e^{\beta A_{s}}|Z_{s}|^{2}ds}
\\
&\leq &\frac{1}{2}\left (\|Y\|^{2}_{\mathscr{S}^{2}_{\beta }(
\mathbf{R}^{k})}+\|Z\|^{2}_{\mathscr{M}^{2}_{\beta }(\mathbf{R}^{k
\times d})}\right ).
\end{eqnarray*}
Since
$(Y,Z)\in \mathscr{S}^{2}_{\beta }(\mathbf{R}^{k})\times \mathscr{M}^{2}_{\beta }(\mathbf{R}^{k\times d})$, 
we get the finite expectation. Since the process
$\left (\int _{0}^{t} e^{\beta A_{s}}\langle Y_{s},Z_{s}dW_{s}
\rangle \right )_{t\leq T}$ is adapted, 
it is a martingale.

By the same arguments,
\begin{equation*}
\left (\displaystyle \int _{0}^{t}\int _{E} e^{\beta A_{s}}\langle Y_{s-},U_{s}(e)
\widetilde{\mu }(ds,de)\rangle \right )_{t\leq T}\quad\ \ \text{and}\quad\ \ \left (
\displaystyle \int _{0}^{t}e^{\beta A_{s}}\langle Y_{s-},g(s)dB_{s}
\rangle \right )_{t\leq T}
\end{equation*}
are martingales since
\begin{eqnarray*}
&&%
\hspace{-1.5cm}%
{\mathbf{E}}\sqrt{\int _{0}^{\tau }\int _{E}e^{2\beta A_{s}}|Y_{s-}|^{2}|U_{s}(e)|^{2}
\lambda (de)ds}
\\
&&%
\hspace{3cm}%
\leq \; {\mathbf{E}}\sqrt{\operatorname*{ess\,sup}_{\tau \in \mathcal{T}_{[0,T]}}e^{
\beta A_{\tau }}|Y_{\tau }|^{2}\int _{0}^{T}e^{\beta A_{s}}\|U_{s}\|_{\lambda }^{2}ds}
\\
&&%
\hspace{3cm}%
\leq \;\frac{1}{2}\left (\|Y\|^{2}_{\mathscr{S}^{2}_{\beta }(
\mathbf{R}^{k})}+\|U\|^{2}_{\mathscr{L}^{2}_{\beta }(\mathbf{R}^{k})}
\right )
\end{eqnarray*}
and
\begin{align*}
{\mathbf{E}}\sqrt{\int _{0}^{\tau }e^{2\beta A_{s}}|Y_{s-}|^{2}|g(s)|^{2}ds}&
\leq  {\mathbf{E}}\sqrt{\operatorname*{ess
\,sup}_{\tau \in \mathcal{T}_{[0,T]}}e^{\beta A_{\tau }}|Y_{\tau }|^{2}
\int _{0}^{T}e^{\beta A_{s}}|g(s)|^{2}ds}
\\
&\leq  \frac{1}{2}\left (\|Y\|_{\mathscr{S}^{2}_{\beta }(\mathbf{R}^{k})}^{2}+
\|g\|_{\mathscr{M}^{2}_{\beta }(\mathbf{R}^{k\times \ell })}^{2}
\right ).\qedhere
\end{align*}
\end{proof}

Let us recall some results 
from the general theory of optional processes,
which will be useful in the sequel.

\begin{theorem}[Mertens decomposition\index{Mertens decomposition}]%
\label{ap1}
Let $\tilde{Y}$ be a strong optional supermartingale\index{optional supermartingale} of class (D). There
exists a unique uniformly integrable martingale (c\`{a}dl\`{a}g) $N$, a
unique nondecreasing right-continuous predictable process $K$ with
$K_{0}=0$ and ${\mathbf{E}}|K_{T}|^{2}<+\infty $, and a unique nondecreasing
right-continuous adapted purely discontinuous process $C$ with
$C_{0-}=0$ and ${\mathbf{E}}|C_{T}|^{2}<+\infty $, such that
\begin{equation*}
\tilde{Y}_{t}=N_{t}-K_{t}-C_{t-}\quad \forall t\leq T \; a.s.
\end{equation*}
\end{theorem}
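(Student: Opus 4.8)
The plan is to reduce the statement to the classical Doob--Meyer decomposition for c\`adl\`ag supermartingales, by first regularizing $\tilde Y$ on the right and then stripping off its right jumps. Throughout, the class (D) property supplies the uniform integrability needed to pass to limits, and the right-continuity of $(\mathcal G_t)_{t\le T}$ guarantees that right limits of optional processes stay adapted.

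\textbf{Existence.} First I would check that the right-regularization $\tilde Y_+:=(\tilde Y_{t+})_{t\le T}$ is a c\`adl\`ag supermartingale of class (D) with $\tilde Y_{t+}\le\tilde Y_t$ for every $t$: the right limits exist by the classical regularization theorem applied to $\tilde Y$ restricted to $\mathbb Q$, $\tilde Y_+$ is optional because $(\mathcal G_t)$ is right-continuous, and the supermartingale inequality for $\tilde Y_+$ follows by a standard limiting argument that uses class (D). Next, set $R_t:=\tilde Y_t-\tilde Y_{t+}\ge 0$ and $C_t:=\sum_{0\le s\le t}R_s$; by exhausting the right-jump times of $\tilde Y$ by a sequence of stopping times, truncating, and comparing with the Doob--Meyer decomposition of $\tilde Y_+$, one shows $C$ is finite and square-integrable, hence a nondecreasing right-continuous purely discontinuous adapted process with $C_{0-}=0$. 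Then $\bar Y:=\tilde Y+C_-\;(=\tilde Y_++C)$ is c\`adl\`ag — its right jumps cancel those of $-C_-$ — and is again a supermartingale; the latter comes down to the inequality $\mathbf E[\,C_{t-}-C_{s-}\mid\mathcal G_s\,]\le\tilde Y_s-\mathbf E[\,\tilde Y_t\mid\mathcal G_s\,]$ for $s\le t$, which follows from the strong supermartingale property at stopping times (using, at predictable times $\sigma$, the inequality $\tilde Y_{\sigma-}\ge\mathbf E[\,\tilde Y_\sigma\mid\mathcal G_{\sigma-}\,]$). Since $|\bar Y_\tau|\le|\tilde Y_\tau|+C_T$, the process $\bar Y$ is a c\`adl\`ag supermartingale of class (D), and the classical Doob--Meyer decomposition yields $\bar Y_t=N_t-K_t$ with $N$ a c\`adl\`ag uniformly integrable martingale and $K$ a nondecreasing right-continuous predictable process, $K_0=0$. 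Hence $\tilde Y_t=\bar Y_t-C_{t-}=N_t-K_t-C_{t-}$, the desired decomposition; the $L^2$ bounds on $K$, $C$ and $N$ come from the $L^2$ version of Doob--Meyer together with the square-integrability of $\tilde Y$ that is in force in our framework.

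\textbf{Uniqueness.} Given two decompositions $\tilde Y=N-K-C_-=N'-K'-C'_-$, passing to right limits gives $N-N'=(K-K')+(C-C')$, so $M:=N-N'$ is a c\`adl\`ag uniformly integrable martingale with paths of finite variation and $M_0=0$. For every stopping time $\tau$ one has $\Delta C_\tau=\tilde Y_\tau-\tilde Y_{\tau+}=\Delta C'_\tau$, so the purely discontinuous increasing processes $C$ and $C'$ have the same jumps and the same initial value, hence coincide; then $M=K-K'$ is a predictable finite-variation martingale null at $0$, which forces $M\equiv 0$, so $N=N'$ and $K=K'$.

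\textbf{The hard part} will be the analysis of the right-jump process $C$ in the existence step: one has to establish simultaneously that the accumulated right jumps of $\tilde Y$ form a (square-)integrable increasing process, and that subtracting their left-continuous version from $\tilde Y$ restores a genuine c\`adl\`ag supermartingale rather than merely a process whose right-regularization is a supermartingale. This is exactly where the \emph{strong} supermartingale inequality at arbitrary stopping times — not just at deterministic times — together with the class (D) assumption, is indispensable; the remainder is a reduction to the classical c\`adl\`ag theory.
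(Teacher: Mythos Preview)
The paper does not prove this statement; Theorem~\ref{ap1} is merely quoted among the ``results from the general theory of optional processes'' recalled in Section~\ref{s1}, with the underlying reference being Dellacherie--Meyer~\cite{DM}. There is therefore no proof in the paper to compare your proposal against.

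For what it is worth, your outline is the standard route to Mertens' decomposition: right-regularize $\tilde Y$, strip off the right jumps into the purely discontinuous increasing process $C$, and apply the classical Doob--Meyer theorem to the resulting c\`adl\`ag supermartingale $\bar Y=\tilde Y+C_{-}=\tilde Y_{+}+C$. The uniqueness argument via $\Delta C_\tau=\tilde Y_\tau-\tilde Y_{\tau+}$ and the fact that a predictable finite-variation martingale null at~$0$ vanishes is correct. One point worth flagging: the square-integrability claims ${\mathbf E}|K_T|^2<+\infty$ and ${\mathbf E}|C_T|^2<+\infty$ do \emph{not} follow from class~(D) alone, and you are right to note that an extra $L^2$ hypothesis on $\tilde Y$ is needed; in the paper these bounds are in fact obtained separately via Corollary~\ref{ap2} rather than from the Mertens decomposition itself, so the $L^2$ clauses in the statement of Theorem~\ref{ap1} are somewhat informal.
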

%
\begin{theorem}[Dellacherie--Meyer]
Let $K$ be a nondecreasing predictable process. Let $U$ be the potential
of the process $K$, i.e. $U:={\mathbf{E}}[K_{T}|\mathcal{G}_{t}]-K_{t}$ for
all $t\leq T$. We assume that there exists a positive
$\mathcal{G}_{T}$-measurable random variable $X$ such that
$|U_{\nu }|\leq {\mathbf{E}}[X|\mathcal{G}_{\nu }]$ a.s. for all
$\nu \in \mathcal{T}_{[0,T]}$. Then
${\mathbf{E}}|K_{T}|^{2}\leq c{\mathbf{E}}|X|^{2}$, where $c$ is a positive constant.
\end{theorem}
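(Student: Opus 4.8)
The plan is to run the classical energy (Garsia-type) argument for the Dellacherie--Meyer domination inequality inside the filtration $(\mathcal{G}_t)_{t\le T}$, which is assumed right-continuous (with the underlying space complete), so that the usual conditions are available. We may assume $\mathbf{E}|X|^2<\infty$, the inequality being trivial otherwise. Write $(V_t)_{t\le T}$ for the c\`{a}dl\`{a}g version of the martingale $V_t=\mathbf{E}[X\mid\mathcal{G}_t]$, and let $U$ denote the c\`{a}dl\`{a}g version of $t\mapsto\mathbf{E}[K_T\mid\mathcal{G}_t]-K_t$; since $K$ is nondecreasing, $U$ is a nonnegative supermartingale, and the hypothesis says precisely that $U_\nu\le V_\nu$ a.s. for every stopping time $\nu$. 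By the optional section theorem this forces $U\le V$ outside an evanescent set, hence also $U_-\le V_-$; moreover, since the predictable projection of a c\`{a}dl\`{a}g martingale is its left-limit process, $U_{s-}=\mathbf{E}[K_T\mid\mathcal{G}_{s-}]-K_{s-}$ and $V_{s-}=\mathbf{E}[X\mid\mathcal{G}_{s-}]$ up to an evanescent set.

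Before the main estimate I would reduce to bounded $K$. For $N\ge 1$ set $K^N:=K\wedge N$: it is again nondecreasing, right-continuous, predictable and null at $0$, and it is bounded, so $(K^N_T)^2\le N^2$ is integrable. A short case split on $\{K_\nu\ge N\}$ and $\{K_\nu<N\}$ shows that the potential of $K^N$ is still dominated by $\mathbf{E}[X\mid\mathcal{G}_\nu]$ at every stopping time $\nu$. Hence it suffices to establish $\mathbf{E}(K^N_T)^2\le 4\,\mathbf{E}|X|^2$ with a constant independent of $N$ and then let $N\uparrow\infty$ by monotone convergence. From now on $K$ is assumed bounded, in particular $K_T\in L^2$.

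For the core step, integration by parts for the finite-variation process $K$ gives $K_T^2=2\int_0^T K_{s-}\,dK_s+\sum_{0<s\le T}(\Delta K_s)^2$, whence the pathwise bound
\begin{equation*}
K_T^2\;\le\;2\int_0^T (K_T-K_{s-})\,dK_s .
\end{equation*}
Now take expectations. Since $K$ is predictable, an integrand under $\mathbf{E}\!\int(\cdot)\,dK_s$ may be replaced by its predictable projection; thus $K_T$ becomes $\mathbf{E}[K_T\mid\mathcal{G}_{s-}]$, and using $\mathbf{E}[K_T\mid\mathcal{G}_{s-}]-K_{s-}=U_{s-}$, the domination $U_{s-}\le V_{s-}=\mathbf{E}[X\mid\mathcal{G}_{s-}]$, the same projection property applied to the constant integrand $X$, and $\int_0^T dK_s=K_T$, we obtain
\begin{equation*}
\mathbf{E}|K_T|^2\;\le\;2\,\mathbf{E}\!\int_0^T U_{s-}\,dK_s\;\le\;2\,\mathbf{E}\!\int_0^T \mathbf{E}[X\mid\mathcal{G}_{s-}]\,dK_s\;=\;2\,\mathbf{E}[X\,K_T].
\end{equation*}
Finally, Cauchy--Schwarz yields $\mathbf{E}|K_T|^2\le 2\,(\mathbf{E}|X|^2)^{1/2}(\mathbf{E}|K_T|^2)^{1/2}$, and dividing (legitimate since $\mathbf{E}|K_T|^2<\infty$ after the truncation) gives $\mathbf{E}|K_T|^2\le 4\,\mathbf{E}|X|^2$. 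Undoing the localization yields the claim with $c=4$.

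I expect the main difficulty to be bookkeeping rather than conceptual: the two uses of the predictable projection and the identifications $U_{s-}=\mathbf{E}[K_T\mid\mathcal{G}_{s-}]-K_{s-}$ and $V_{s-}=\mathbf{E}[X\mid\mathcal{G}_{s-}]$, which rely on $K$ being predictable and on $(\mathcal{G}_t)$ satisfying the usual conditions; and the verification that the domination hypothesis --- stated only at stopping times --- descends both to left limits (via optional section) and to the truncated processes $K\wedge N$. The integration-by-parts inequality and the concluding Cauchy--Schwarz step are routine.
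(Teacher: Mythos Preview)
The paper does not supply its own proof here: it simply cites Dellacherie--Meyer \cite{DM}, Chapter~VI, Theorem~99, and explicitly remarks that the result there covers ``the case of a nondecreasing process which is not necessarily right-continuous nor left-continuous.'' Your argument is the classical Garsia-type energy inequality, and for a \emph{right-continuous} nondecreasing predictable process $K$ with $K_0=0$ it is correct: the integration-by-parts identity $K_T^2=2\int_0^T K_{s-}\,dK_s+\sum_{0<s\le T}(\Delta K_s)^2$, the truncation $K\wedge N$, and the two predictable-projection substitutions are all sound in that setting, and you arrive at the explicit constant $c=4$.

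The gap relative to what the paper actually needs is the regularity of $K$. You tacitly assume $K$ is c\`adl\`ag (you describe $K^N=K\wedge N$ as ``right-continuous'' and ``null at $0$'', neither of which is in the hypotheses), but the corollary immediately following applies the theorem to the Mertens process $\widetilde K_t=K_t+C_{t-}$, which carries right jumps from $C$ and is therefore only l\`adl\`ag. For such a process your product formula is not the right one: the Gal'chouk--Lenglart formula (Theorem~\ref{gal} in the paper) produces additional terms from the right jumps $\Delta_+K$, and the identity $\int_0^T dK_s=K_T-K_0$ must be read accordingly. The repair is routine --- replace the c\`adl\`ag integration by parts by its l\`adl\`ag version and rerun the projection/Cauchy--Schwarz step --- but as written your proof covers only the right-continuous case, whereas the paper deliberately defers to \cite{DM} precisely because the general l\`adl\`ag statement is required.
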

The proof is established in chapter VI, Theorem 99, \cite{DM} for the case
of a nondecreasing process which is not necessarily right-continuous nor
left-continuous.
%
\begin{corollary}%
\label{ap2}
Let $Y$ be a strong optional supermartingale\index{optional supermartingale} of class (D) such that, for
all $\nu \in \mathcal{T}_{[0,T]}$,
$|Y_{\nu }|\leq {\mathbf{E}}[X|\mathcal{G}_{\nu }]$ a.s., where $X$ is a nonnegative
$\mathcal{G}_{T}$-measurable random variable. Let $\tilde{K}$ be the Mertens
process\index{Mertens process} associated with $Y$. Then there exists a positive constant $c$ such
that ${\mathbf{E}}|\tilde{K}_{T}|^{2}\leq c{\mathbf{E}}|X|^{2}$.
\end{corollary}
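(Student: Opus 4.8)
The plan is to combine the Mertens decomposition of $Y$ with the Dellacherie--Meyer potential estimate quoted above, the point being that the object to be estimated, $\tilde{K}_T$, can be written purely in terms of $Y$ and a closed uniformly integrable martingale. First I would apply Theorem~\ref{ap1} to $Y$: since $Y$ is a strong optional supermartingale of class (D), there exist a uniformly integrable c\`{a}dl\`{a}g martingale $N$, a nondecreasing right-continuous predictable process $K$ with $K_0=0$, and a nondecreasing right-continuous adapted purely discontinuous process $C$ with $C_{0-}=0$, such that $Y_t=N_t-K_t-C_{t-}$ for all $t\le T$ a.s. By definition the Mertens process associated with $Y$ is $\tilde{K}_t:=K_t+C_{t-}$, so that $Y_t=N_t-\tilde{K}_t$; note that $\tilde{K}$ is a nonnegative, nondecreasing, l\`{a}dl\`{a}g optional process with $\tilde{K}_0=0$, which is in general neither right- nor left-continuous.

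Next I would identify the potential of $\tilde{K}$. Evaluating the decomposition at $t=T$ gives $N_T=Y_T+\tilde{K}_T$, and since $N$ is a uniformly integrable martingale closed by $N_T$ we have $\mathbf{E}[N_T\mid\mathcal{G}_\nu]=N_\nu$ for every $\nu\in\mathcal{T}_{[0,T]}$. Hence, writing $\tilde{K}_T=N_T-Y_T$ and $\tilde{K}_\nu=N_\nu-Y_\nu$,
\[
U_\nu:=\mathbf{E}[\tilde{K}_T\mid\mathcal{G}_\nu]-\tilde{K}_\nu=\mathbf{E}[N_T-Y_T\mid\mathcal{G}_\nu]-(N_\nu-Y_\nu)=Y_\nu-\mathbf{E}[Y_T\mid\mathcal{G}_\nu].
\]
Using the hypothesis $|Y_\nu|\le\mathbf{E}[X\mid\mathcal{G}_\nu]$ at the stopping time $\nu$ and at $T$ (so that $|Y_T|\le\mathbf{E}[X\mid\mathcal{G}_T]=X$ a.s., $X$ being $\mathcal{G}_T$-measurable), this yields
\[
|U_\nu|\le|Y_\nu|+\mathbf{E}\big[|Y_T|\,\big|\,\mathcal{G}_\nu\big]\le 2\,\mathbf{E}[X\mid\mathcal{G}_\nu]=\mathbf{E}[2X\mid\mathcal{G}_\nu]\qquad\text{for all }\nu\in\mathcal{T}_{[0,T]}.
\]

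Finally I would apply the Dellacherie--Meyer estimate to $\tilde{K}$ with the dominating variable $2X$. Although that theorem was stated for a nondecreasing predictable process, the cited result (\cite{DM}, Ch.~VI, Thm.~99) is valid for an arbitrary nondecreasing process which need not be right- or left-continuous, so it does apply to $\tilde{K}=K+C_{-}$; it gives $\mathbf{E}|\tilde{K}_T|^2\le c'\,\mathbf{E}|2X|^2=4c'\,\mathbf{E}|X|^2$, and setting $c:=4c'$ completes the proof. The only delicate point is precisely this last step: one must use the general (raw-process) form of the Dellacherie--Meyer inequality rather than its classical predictable version, since $\tilde{K}$ is not predictable and not right-continuous; and one must keep track of the terminal-time conventions (the identity $N_T=Y_T+\tilde{K}_T$ and the meaning of $C_{T-}$) so that the potential identity for $U_\nu$ holds up to and including $t=T$.
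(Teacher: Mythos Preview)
Your argument is correct and is exactly the natural route; the paper itself does not give a proof at all but simply cites \cite{ME:2019}, so there is nothing to compare beyond noting that what you wrote is the standard derivation one would expect to find there.

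One small correction to your closing remarks: $\tilde{K}=K+C_{-}$ \emph{is} in fact predictable, because $K$ is predictable by assumption and $C_{-}$, being left-continuous and adapted, is predictable as well. What fails is right-continuity (since $C_{-}$ is left-continuous), so you are right that the general form of the Dellacherie--Meyer inequality---valid for nondecreasing processes that need be neither right- nor left-continuous, as the paper itself notes after the statement of that theorem---is the relevant one. Everything else is fine.
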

The proof is established in \cite{ME:2019}.
%
\begin{theorem}[Gal'chouk--Lenglart formula]%
\label{gal}
Let $n\in \mathbb{N}$. Let $Y$ be an $n$-dimensional optional semimartingale
with the decomposition $Y^{k}=Y^{k}_{0}+M^{k}+R^{k}+O^{k}$, for all
$k=1,\ldots ,n$, where $M^{k}$ is a (c\`{a}dl\`{a}g) local martingale,
$R^{k}$ is a right-continuous process of finite variation such that
$R^{k}_{0} =0$ and $O^{k}$ is a left-continuous process of finite variation
which is purely discontinuous and such that $O^{k}_{0} =0$. Let $F$ be
a twice continuously differentiable function on $\mathbb{R}^{n}$. Then,
almost surely, for all $t\geq 0$,
\begin{eqnarray*}
F(Y_{t})&=&F(Y_{0}) +\sum _{k=1}^{n}\int _{0}^{t}D^{k} F(Y_{s-})d(M^{k}+R^{k})_{s}+
\sum _{k=1}^{n}\int _{0}^{t}D^{k} F(Y_{s})dO^{k}_{s+}
\\
&&+\frac{1}{2}\sum _{k,l=1}^{n}\int _{0}^{t}D^{k}D^{l} F(Y_{s-})d[M^{k,c},M^{l,c}]_{s}
\\
&&+\sum _{0<s\leq t}\left [F(Y_{s})-F(Y_{s-})-\sum _{k=1}^{n}D^{k} F(Y_{s-})
\Delta Y^{k}_{s}\right ]
\\
&&+\sum _{0\leq s<t}\left [F(Y_{s+})-F(Y_{s})-\sum _{k=1}^{n}D^{k} F(Y_{s})
\Delta _{+}Y^{k}_{s}\right ],
\end{eqnarray*}
where $D^{k}$ denotes the differentiation operator with respect to the
$k$-th coordinate, and $M^{k,c}$ denotes the continuous part of
$M^{k}$.
\end{theorem}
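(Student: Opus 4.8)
The plan is to bootstrap from the classical change‑of‑variables formula for c\`{a}dl\`{a}g semimartingales, treating the left‑continuous purely discontinuous parts $O^{k}$ separately by an elementary summation argument; one could of course instead simply quote \cite{Len:1980}. First I would localize: choose stopping times $\tau _{n}\uparrow T$ making each $M^{k}$ a square‑integrable martingale, $\int _{0}^{T}|dR^{k}_{s}|$ and $\int _{0}^{T}|dO^{k}_{s}|$ bounded, and $Y$ confined to a compact set on which $F$, $D^{k}F$ and $D^{k}D^{l}F$ are bounded. Since both sides of the asserted identity commute with stopping and are stable under the obvious limiting operations, it suffices to argue under these hypotheses. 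Because $O^{k}$ is left‑continuous, purely discontinuous with $O^{k}_{0}=0$, one has $O^{k}_{t}=\sum _{0\le s<t}\Delta _{+}O^{k}_{s}$ with $\sum _{s}|\Delta _{+}O^{k}_{s}|<\infty $; I would then set $O^{k,n}_{t}:=\sum _{0\le s<t,\ |\Delta _{+}O^{k}_{s}|\ge 1/n}\Delta _{+}O^{k}_{s}$, which has only finitely many jumps on $[0,T]$, and put $Y^{n}:=Y_{0}+M+R+O^{n}$, so that $O^{n}\to O$ and $Y^{n}\to Y$ uniformly, a.s.

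Next I would establish the formula for $Y^{n}$, i.e. in the case where the left‑continuous part has finitely many jump times $0\le s_{1}<\dots <s_{N}\le t$. Writing $\bar Y^{k}:=Y^{k}_{0}+M^{k}+R^{k}$, a c\`{a}dl\`{a}g semimartingale, one has $Y=\bar Y+O$, and $O$ is constant, equal to $c_{i-1}:=O_{s_{i-1}+}$, on $(s_{i-1},s_{i}]$ (with $c_{0}=0$ on $[0,s_{1}]$). On each of the intervals $[0,s_{1}],(s_{1},s_{2}],\dots ,(s_{N},t]$ the process $Y$ equals $\bar Y$ shifted by a constant, so the classical formula applied to $F(\cdot +c_{i-1})$ yields the increment of $F(Y)$ over that interval; at each $s_{i}$ one adds the genuine right‑jump increment $F(Y_{s_{i}+})-F(Y_{s_{i}})$, and telescoping gives $F(Y_{t})-F(Y_{0})$. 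Using $Y_{s-}=\bar Y_{s-}+c_{i-1}$, $dY^{k}=d\bar Y^{k}$ and $[Y^{k,c},Y^{l,c}]=[M^{k,c},M^{l,c}]$ on each interval, the integral and bracket contributions reassemble into $\sum _{k}\int _{0}^{t}D^{k}F(Y_{s-})\,d(M^{k}+R^{k})_{s}+\tfrac 12\sum _{k,l}\int _{0}^{t}D^{k}D^{l}F(Y_{s-})\,d[M^{k,c},M^{l,c}]_{s}$, while the left‑jump Taylor remainders of the c\`{a}dl\`{a}g pieces sum to $\sum _{0<s\le t}\bigl[F(Y_{s})-F(Y_{s-})-\sum _{k}D^{k}F(Y_{s-})\Delta Y^{k}_{s}\bigr]$ (here $\Delta Y=\Delta \bar Y$). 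Finally, at each $s_{i}$ one writes
\[
F(Y_{s_{i}+})-F(Y_{s_{i}})=\sum _{k}D^{k}F(Y_{s_{i}})\Delta _{+}O^{k}_{s_{i}}+\Bigl(F(Y_{s_{i}+})-F(Y_{s_{i}})-\sum _{k}D^{k}F(Y_{s_{i}})\Delta _{+}Y^{k}_{s_{i}}\Bigr),
\]
using $\Delta _{+}Y_{s_{i}}=\Delta _{+}O_{s_{i}}$; summing over $i$ produces $\sum _{k}\int _{0}^{t}D^{k}F(Y_{s})\,dO^{k}_{s+}$ together with $\sum _{0\le s<t}\bigl[F(Y_{s+})-F(Y_{s})-\sum _{k}D^{k}F(Y_{s})\Delta _{+}Y^{k}_{s}\bigr]$. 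Assembling these contributions gives the identity for $Y^{n}$.

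The last step is to let $n\to \infty $ and then remove the localization. The continuous‑bracket term does not depend on $n$; the stochastic and Stieltjes integrals converge by dominated convergence, using the $L^{2}$‑bound on $M$, the bounded variation of $R$, and the boundedness of the derivatives of $F$; and the two jump series converge because $\Delta Y^{n}=\Delta Y$, $\Delta _{+}Y^{n}=\Delta _{+}O^{n}\to \Delta _{+}O$, $Y^{n}_{s\pm }\to Y_{s\pm }$, while the $n$‑th summands are dominated respectively by $\tfrac 12\|D^{2}F\|_{\infty }|\Delta Y_{s}|^{2}$ and $\tfrac 12\|D^{2}F\|_{\infty }|\Delta _{+}O_{s}|^{2}$. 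I expect this limiting passage to be the main obstacle: one must handle all five families of terms simultaneously, and the whole argument hinges on the a.s. summability $\sum _{s}|\Delta Y_{s}|^{2}+\sum _{s}|\Delta _{+}O_{s}|<\infty $ furnished by the localization, which is precisely what legitimizes dominated convergence for the jump series and for the $dO_{+}$‑integral. The algebraic bookkeeping of the second step (matching left limits against values at jump times, and checking that $O$, having no continuous part, leaves the bracket $[M^{k,c},M^{l,c}]$ untouched) is routine but must be carried out carefully so as to land exactly on the six terms on the right‑hand side of the statement.
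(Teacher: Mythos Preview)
The paper does not supply a proof of this theorem: it is stated as the Gal'chouk--Lenglart formula and attributed to Lenglart \cite{Len:1980}, with no argument given. So there is nothing in the paper to compare your proposal against.

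That said, your outline is a reasonable and essentially correct strategy for establishing the formula independently. The reduction to finitely many right jumps by truncating $|\Delta_{+}O_s|\ge 1/n$, the piecewise application of the classical c\`adl\`ag It\^o formula on the intervals $(s_{i-1},s_i]$, and the telescoping assembly are all sound. One point to be careful about: your dominated-convergence step for the right-jump series relies on the bound $\tfrac12\|D^2F\|_\infty |\Delta_+O_s|^2$, but the summability you have from finite variation is only $\sum_s|\Delta_+O_s|<\infty$, not $\sum_s|\Delta_+O_s|^2$. This is harmless here because $|\Delta_+O_s|^2\le C|\Delta_+O_s|$ once the jumps are bounded (which your localization ensures), but you should say so explicitly. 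Similarly, for the $dO_+$-integral you need $D^kF(Y^n_s)\to D^kF(Y_s)$ uniformly in $s$, which follows from $Y^n\to Y$ uniformly and the uniform continuity of $D^kF$ on the compact set; again this is fine under your localization but deserves a line. With these two clarifications the limiting argument goes through, and the algebraic bookkeeping in the second step is exactly as you describe.
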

%
\begin{corollary}
Let $Y$ be an optional semimartingale\index{optional semimartingale} with the decomposition
$Y=Y_{0}+M+R+O$ where $M$, $R$ and $O$ are as in Theorem \ref{gal}. Then, almost surely, for all $t\geq 0$,
\begin{eqnarray*}
&& e^{\beta A_{t}}|Y_{t}|^{2}+\beta \int _{t}^{T}e^{\beta A_{s}}a^{2}_{s}|Y_{s}|^{2}ds
\\
&=&e^{\beta A_{T}}|Y_{T}|^{2}+ 2\int _{t}^{T}e^{\beta A_{s}} Y_{s-}d(M+R)_{s}+
\int _{t}^{T}e^{\beta A_{s}} Y_{s}dO_{s+}
\\
&&+\int _{t}^{T}e^{\beta A_{s}}d[M^{c},M^{c}]_{s}-\sum _{t<s\leq T}e^{
\beta A_{s}}\left (\Delta Y_{s}\right )^{2}-\sum _{t\leq s<T}e^{
\beta A_{s}}\left (\Delta _{+}Y_{s}\right )^{2}.
\end{eqnarray*}
\end{corollary}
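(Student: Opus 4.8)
The plan is to derive the identity as an immediate consequence of the Gal'chouk--Lenglart formula (Theorem~\ref{gal}), applied to the smooth function $F(y,a)=e^{\beta a}|y|^{2}$ along a suitably augmented optional semimartingale, followed by a rearrangement of the resulting identity. The first step is to observe that $(A_{t})_{t\le T}$, $A_{t}=\int_{0}^{t}a_{s}^{2}\,ds$, is continuous, adapted and of finite variation with $A_{0}=0$, so that the $(n+1)$-dimensional process $(Y,A)$ is again an optional semimartingale whose decomposition in the sense of Theorem~\ref{gal} is $(Y,A)=(Y_{0},0)+(M,0)+(R,A)+(O,0)$; in particular the appended coordinate contributes nothing to the local martingale part, nothing to the purely discontinuous left-continuous part, and has no jumps ($\Delta A=\Delta_{+}A=0$, $A_{s-}=A_{s+}=A_{s}$).

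Next I would apply Theorem~\ref{gal} with $F(y,a)=e^{\beta a}|y|^{2}$, which is $C^{2}$ on $\mathbf{R}^{n}\times\mathbf{R}$, and read off the terms from the partial derivatives $D^{k}F=2e^{\beta a}y_{k}$ for $k\le n$, $D^{n+1}F=\beta e^{\beta a}|y|^{2}$, $D^{k}D^{l}F=2e^{\beta a}\delta_{kl}$ for $k,l\le n$, and the remaining second derivatives (those involving the $a$-coordinate). The first-order right-continuous term then yields $2\int e^{\beta A_{s}}\langle Y_{s-},d(M+R)_{s}\rangle$ together with the contribution $\beta\int e^{\beta A_{s}}a_{s}^{2}|Y_{s-}|^{2}\,ds$ coming from the finite-variation part $A$ of the appended coordinate; the left-continuous term yields $2\int e^{\beta A_{s}}\langle Y_{s},dO_{s+}\rangle$; the quadratic-variation term reduces to $\int e^{\beta A_{s}}\,d[M^{c},M^{c}]_{s}$, since all brackets involving $M^{n+1,c}=0$ vanish and $\tfrac12\cdot 2\sum_{k}[M^{k,c},M^{k,c}]=[M^{c},M^{c}]$; and, using the elementary identity $|b|^{2}-|a|^{2}-2\langle a,b-a\rangle=|b-a|^{2}$ together with $\Delta A=\Delta_{+}A=0$, the two jump sums collapse to $\sum e^{\beta A_{s}}(\Delta Y_{s})^{2}$ and $\sum e^{\beta A_{s}}(\Delta_{+}Y_{s})^{2}$. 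Writing this identity at time $T$ and at time $t$ and subtracting gives the version on $[t,T]$; finally, since $Y$ is l\`{a}dl\`{a}g it has at most countably many jumps, so $|Y_{s-}|^{2}=|Y_{s}|^{2}$ for Lebesgue-a.e.\ $s$, which lets one replace $Y_{s-}$ by $Y_{s}$ in the absolutely continuous integral and move that term to the left-hand side, producing the stated formula.

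The computation is essentially bookkeeping and there is no genuine obstacle; the two points that deserve care are (i) verifying that the appended coordinate $A$ produces no extra contribution to the second-order term --- so that only $\int e^{\beta A_{s}}d[M^{c},M^{c}]_{s}$ survives and no $\beta$- or $\beta^{2}$-terms appear --- and no contribution to either jump sum, and (ii) tracking the signs when passing from the forward form of Theorem~\ref{gal} to the backward form on $[t,T]$ and moving the $\beta$-term across. No integrability hypothesis is needed at this stage, because the Gal'chouk--Lenglart formula is a pathwise identity for optional semimartingales; the integrability built into the solution spaces enters only afterwards, when the stochastic integrals appearing here are shown to be (local) martingales, e.g.\ via Proposition~\ref{prop}. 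As an alternative route one could first apply Theorem~\ref{gal} to $|y|^{2}$ alone and then multiply by the continuous finite-variation process $e^{\beta A_{t}}$ by integration by parts (which carries no bracket correction, $A$ being continuous of finite variation); the two-coordinate argument above simply merges these two steps.
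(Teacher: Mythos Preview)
Your proposal is correct and is essentially the same approach as the paper's: the paper's one-line proof applies Theorem~\ref{gal} with $F(X,Y)=XY^{2}$ to the augmented process with appended coordinate $X_{t}=e^{\beta A_{t}}$, whereas you append $A_{t}$ itself and absorb the exponential into $F$; since $e^{\beta A_{t}}$ is a continuous finite-variation transform of $A_{t}$, the two parametrizations are interchangeable and yield the same computation. Your explicit checks (no second-order cross terms from the appended coordinate, handling of the jump sums via $\Delta A=\Delta_{+}A=0$, and the $Y_{s-}\to Y_{s}$ replacement in the Lebesgue integral) are exactly the bookkeeping the paper leaves implicit.
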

\begin{proof}
To prove the corollary, it suffices to apply the change of variables formula
from Theorem \ref{gal} with $F(X,Y)=XY^{2}$ for
$X_{t}=e^{\beta A_{t}}$.
\end{proof}

\begin{lemma}%
\label{lem1}
Let
$ Y\in {\mathscr{S}}^{2}_{\beta }(\mathbf{R}^{k}),\vartheta \in
\mathscr{M}^{2}_{\beta }(\mathbf{R}^{k}),\zeta \in \mathscr{M}^{2}_{
\beta }(\mathbf{R}^{k\times \ell }),\pi \in \mathscr{M}^{2}_{\beta } (
\mathbf{R}^{k\times d})$ and
$ \phi \in \mathscr{L}^{2}_{\beta }(\mathbf{R}^{k})$ be such that
\begin{equation*}
Y_{t} = Y_{0} - \int _{0}^{t}\vartheta _{s}ds - \int _{0}^{t}\zeta _{s}dB_{s}
+ \int _{0}^{t}\pi _{s} dW_{s} + \int _{0}^{t}\int _{E}\phi _{s}(e)
\widetilde{\mu }(ds,de)-K_{t}-C_{t-},
\end{equation*}
where ${\mathbf{E}}|K_{T}|^{2}+{\mathbf{E}}|C_{T}|^{2}<+\infty $. Then $Y$ is an
optional semimartingale\index{optional semimartingale} with the decomposition $Y=Y_{0}+M+R+O$ where
$M_{t}=- \int _{0}^{t}\zeta _{s}dB_{s} + \int _{0}^{t}\pi _{s} dW_{s} +
\int _{0}^{t}\int _{E}\phi _{s}(e)\widetilde{\mu }(ds,de)$,
$R_{t}= - \int _{0}^{t}\vartheta _{s}ds-K_{t}$ and $O_{t}=-C_{t-}$, and
we have, for any $\beta >0$ and $t\leq T$,
\begin{eqnarray*}
&& e^{\beta A_{t}}|Y_{t}|^{2}+\beta \int _{t}^{T}e^{\beta A_{s}}a^{2}_{s}|Y_{s}|^{2}ds+
\int _{t}^{T}e^{\beta A_{s}}|\pi _{s}|^{2}ds
\\
&=&e^{\beta A_{T}}|Y_{T}|^{2}+ 2\int _{t}^{T}e^{\beta A_{s}}\langle Y_{s-},
\vartheta _{s}\rangle ds+ 2\int _{t}^{T}e^{\beta A_{s}}\langle Y_{s-},
\zeta _{s}dB_{s}\rangle
\\
&& -2\int _{t}^{T}e^{\beta A_{s}}\langle Y_{s-},\pi _{s}dW_{s}
\rangle - 2\int _{t}^{T}\int _{E} e^{\beta A_{s}}\langle Y_{s-},
\phi _{s}(e)\widetilde{\mu }(de ,ds)\rangle
\\
&&+\int _{t}^{T}e^{\beta A_{s}}|\zeta _{s}|^{2}ds+ 2\int _{t}^{T}e^{
\beta A_{s}}\langle Y_{s-},dK_{s}\rangle + 2\int _{t}^{T}e^{\beta A_{s}}
\langle Y_{s},dC_{s}\rangle
\\
&&-\sum _{t<s\leq T}e^{\beta A_{s}}\left (\Delta Y_{s}\right )^{2}-
\sum _{t\leq s<T}e^{\beta A_{s}}\left (\Delta _{+}Y_{s}\right )^{2}.
\end{eqnarray*}
\end{lemma}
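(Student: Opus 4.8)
The plan is to recognize the process $Y$ of the statement as an optional semimartingale of exactly the type handled by Theorem~\ref{gal}, and then to apply the corollary of that theorem (the $e^{\beta A_{t}}|Y_{t}|^{2}$ change-of-variables formula, i.e. the case $F(x,y)=xy^{2}$, $X_{t}=e^{\beta A_{t}}$).

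First I would verify the decomposition $Y=Y_{0}+M+R+O$. Put $M_{t}=-\int_{0}^{t}\zeta_{s}dB_{s}+\int_{0}^{t}\pi_{s}dW_{s}+\int_{0}^{t}\int_{E}\phi_{s}(e)\widetilde{\mu}(ds,de)$; this is a c\`adl\`ag local martingale (in fact, thanks to $\zeta,\pi\in\mathscr{M}^{2}_{\beta}$ and $\phi\in\mathscr{L}^{2}_{\beta}$, a square-integrable martingale, since $e^{\beta A_{s}}\ge 1$). Put $R_{t}=-\int_{0}^{t}\vartheta_{s}ds-K_{t}$; this is right-continuous, of finite variation (the drift integral is absolutely continuous and integrable because $\vartheta\in\mathscr{M}^{2}_{\beta}$, and $K$ is nondecreasing with $\mathbf{E}|K_{T}|^{2}<+\infty$), and $R_{0}=0$ since $K_{0}=0$. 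Put $O_{t}=-C_{t-}$; since $C$ is nondecreasing, right-continuous, purely discontinuous with $\mathbf{E}|C_{T}|^{2}<+\infty$, the process $O$ is left-continuous, of finite variation, purely discontinuous, with $O_{0}=-C_{0-}=0$. Hence the hypotheses of Theorem~\ref{gal} are met. I would also record that the continuous martingale part is $M^{c}_{t}=-\int_{0}^{t}\zeta_{s}dB_{s}+\int_{0}^{t}\pi_{s}dW_{s}$ — the compensated Poisson integral is purely discontinuous — so that, using the independence of $B$ and $W$, $d[M^{c},M^{c}]_{s}=(|\zeta_{s}|^{2}+|\pi_{s}|^{2})\,ds$.

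Next I would apply the corollary of Theorem~\ref{gal} to this $Y$ and substitute the explicit pieces. In the term $2\int_{t}^{T}e^{\beta A_{s}}Y_{s-}\,d(M+R)_{s}$ I would expand $d(M+R)_{s}=-\zeta_{s}dB_{s}+\pi_{s}dW_{s}+\int_{E}\phi_{s}(e)\widetilde{\mu}(ds,de)-\vartheta_{s}ds-dK_{s}$; in the term $2\int_{t}^{T}e^{\beta A_{s}}Y_{s}\,dO_{s+}$ I would use that $O_{s+}=\lim_{u\downarrow s}(-C_{u-})=-C_{s}$ by right-continuity of $C$, so this equals $-2\int_{t}^{T}e^{\beta A_{s}}\langle Y_{s},dC_{s}\rangle$; and I would split $\int_{t}^{T}e^{\beta A_{s}}\,d[M^{c},M^{c}]_{s}=\int_{t}^{T}e^{\beta A_{s}}|\zeta_{s}|^{2}ds+\int_{t}^{T}e^{\beta A_{s}}|\pi_{s}|^{2}ds$, transferring the $|\pi_{s}|^{2}$-integral to the left-hand side. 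Collecting all the resulting terms reproduces the stated identity.

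As for difficulty, none of the steps is deep: the work is essentially mechanical once the corollary is available. The one point requiring genuine care is the bookkeeping of the non-c\`adl\`ag structure — correctly identifying $O=-C_{t-}$ as the \emph{left-continuous, purely discontinuous} finite-variation component (so that $K$ belongs to $R$ and $C$ to $O$), handling the right-limit integral $\int Y_{s}\,dO_{s+}$, keeping the two separate jump sums $\sum(\Delta Y_{s})^{2}$ and $\sum(\Delta_{+}Y_{s})^{2}$, and checking that only the two Brownian integrals feed the continuous bracket $[M^{c},M^{c}]$. The integrability assumptions ($Y\in\mathscr{S}^{2}_{\beta}$, $\vartheta,\zeta,\pi\in\mathscr{M}^{2}_{\beta}$, $\phi\in\mathscr{L}^{2}_{\beta}$, $\mathbf{E}|K_{T}|^{2}+\mathbf{E}|C_{T}|^{2}<+\infty$) play no role in the change-of-variables computation itself, which is pathwise; they only guarantee that $Y$ sits in the proper space and that every integral and sum occurring in the formula is a.s.\ finite (and, later on, usable under expectation).
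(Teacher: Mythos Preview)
Your proposal is correct and follows exactly the route the paper intends: the paper states Lemma~\ref{lem1} immediately after the Corollary of Theorem~\ref{gal} (whose one-line proof is ``apply Theorem~\ref{gal} with $F(X,Y)=XY^{2}$, $X_{t}=e^{\beta A_{t}}$'') and leaves Lemma~\ref{lem1} without a separate proof, the implicit argument being precisely your identification of $M$, $R$, $O$ followed by substitution into that Corollary. Your bookkeeping of $M^{c}$, the bracket $d[M^{c},M^{c}]_{s}=(|\zeta_{s}|^{2}+|\pi_{s}|^{2})\,ds$, and the handling of $dO_{s+}=-dC_{s}$ are all in line with the paper's conventions.
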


\section{Reflected BDSDEJs with stochastic Lipschitz coefficients\index{Lipschitz coefficients}}
\label{s2}

\subsection{Assumptions}

We assume that the parameters $(f,g,\xi )$ satisfy the following assumptions
\textbf{(A1)}, for some $\beta > 0$ (where we define for all
$t\leq T,\,\, h(t, 0) = h(t, 0, 0, 0), \,\,\text{for}\,\, h\in \left
\{  f, g\right \}  $ to ease the reading).
\begin{description}
\item[\bf(A1.1):] $f$ and $g$ are jointly measurable, and
there exists a constant $\alpha \in\ ]0,1[$ and four non-negative,
${\mathscr{F}}^{W}_{t}$-measurable processes
$(\gamma _{t})_{t\le T}$, $(\kappa _{t})_{t\le T}$,
$(\sigma _{t})_{t\le T}$ and $(\varrho _{t})_{t\le T}$ such that for all
$(y, y^{\prime }) \in (\mathbf{R}^{k})^{2}$,
$(z, z^{\prime }) \in (\mathbf{R}^{k\times d})^{2}$ and
$(u,u^{\prime })\in (\mathscr{L}_{\lambda })^{2}$,
\begin{eqnarray*}
&\vert f(t, y, z, u) - f(t, y^{\prime }, z^{\prime }, u^{\prime }) \vert
\le \gamma _{t} |y - y^{\prime }| + \kappa _{t} |z-z^{\prime }| + \sigma _{t}
\left \Vert  u-u^{\prime }\right \Vert _{\lambda },
\\
&\vert g(t, y, z, u) - g(t, y^{\prime }, z^{\prime }, u^{\prime }) \vert ^{2}
\le \varrho _{t}|y - y^{\prime }|^{2} + \alpha \left (|z-z^{\prime }|^{2} +
\left \Vert  u-u^{\prime }\right \Vert ^{2}_{\lambda }\right ).
\end{eqnarray*}
\item[\bf(A1.2):] For all
$0\leq t \leq T,\,\, a^{2}_{t} = \gamma _{t} + \kappa ^{2}_{t} +
\sigma ^{2}_{t} + \varrho _{t} > 0$.
\item[\bf(A1.3):] For any
$(t, y, z, u)\in [0, T]\times \mathbf{R}^{k}\times \mathbf{R}^{k
\times d}\times \mathscr{L}_{\lambda }, f(t,y,z,u)$ and
$g(t,y,z,u)$ are $\mathscr{F}_{t}$-measurable with
$\frac{f(.,0)}{a}\in \mathscr{M}^{2}_{\beta }(\mathbf{R}^{k})$ and
$g(.,0)\in \mathscr{M}^{2}_{\beta }(\mathbf{R}^{k\times \ell })$.
\item[\bf(A1.4):] The irregular barrier\index{irregular barrier}
$(\xi _{t})_{t\le T}$ is in
$\mathscr{S}^{2}_{2\beta }(\mathbf{R}^{k})$.
\end{description}

\subsection{Existence and uniqueness of solution}

Before proving the existence and uniqueness, let us establish the corresponding
result in the case where the coefficients $f$ and $g$ do not depend on
the variables $Y$, $Z$ and $U$. So we consider the RBDSDEJ,
$\forall \tau \in \mathcal{T}_{[0,T]}$,
%
\begin{equation}
\label{eq1}
\left \{
\begin{array}{@{}l}
Y_{\tau }=\xi _{T} +\displaystyle \int _{\tau }^{T} f(s)ds+\int _{\tau }^{T} g(s)dB_{s}-\int _{\tau }^{T}Z_{s}dW_{s}-\int _{\tau }^{T}
\int _{E}U_{s}(e)\widetilde{\mu }(ds,de)
\\
\hspace{1cm}
+K_{T}-K_{\tau }+C_{T-}-C_{\tau -},
\\
Y_{\tau }\geq \xi _{\tau },
\\
\displaystyle \int _{0}^{T}\mathbh{1}_{\{Y_{t}>\xi _{t}\}}dK^{c}_{t}=0,\quad
 (Y_{\tau -}-\xi _{\tau -})\Delta K^{d}_{\tau }=0\quad  \text{and}\quad (Y_{\tau }-
\xi _{\tau })\Delta C_{\tau }=0 \ \text{a.s.}
\end{array}
\right .
\end{equation}
where $K=K^{c}+K^{d}$ (continuous + purely discontinuous part) is a nondecreasing
right-continuous predictable process with $K_{0}=0$ and $C$ is a nondecreasing
right-continuous predictable purely discontinuous process with
$C_{0-}=0$. Moreover, the irregular barrier\index{irregular barrier} $\xi $ satisfies \textbf{(A1.4)}
and the coefficients $(f,g)$ satisfy the following condition:
\begin{description}
\item[\bf(A1.5):] For any $t \leq T$, $f(t)$ and $g(t)$ are
$\mathscr{F}_{t}$-measurable with
$\frac{f(.)}{a}\in \mathscr{M}^{2}_{\beta }(\mathbf{R}^{k})$ and
$g(.)\in \mathscr{M}^{2}_{\beta }(\mathbf{R}^{k\times \ell })$.
\end{description}

Let us prove an a priori estimate of the solution in the following lemma.
%
\begin{lemma}%
\label{ll}
Let $(Y^{1},Z^{1},U^{1},K^{1},C^{1})$ and
$(Y^{2},Z^{2},U^{2},K^{2},C^{2})$ be two solutions to RBDSDEJs\index{RBDSDEJs} with parameters
$(f^{1}(.),g^{1}(.),\xi ^{1})$ and $(f^{2}(.),g^{2}(.),\xi ^{2})$, respectively.
We denote $\overline{\Re }:=\Re ^{1}-\Re ^{2}$ for
$\Re \in \{Y,Z,U,K,C,f,g,\xi \}$. Then there exists a constant
$\kappa (\beta )$ depending on $\beta $ such that for all $\beta>1$
\begin{eqnarray*}
&&\|\overline{Y}\|_{\mathscr{B}^{2}_{\beta }(\mathbf{R}^{k})}^{2}+\|
\overline{Z}\|_{\mathscr{M}^{2}_{\beta }(\mathbf{R}^{k\times d})}^{2}+
\|\overline{U}\|_{\mathscr{L}^{2}_{\beta }(\mathbf{R}^{k})}^{2}
\\
&\leq &\kappa (\beta ) \left (\|\overline{\xi }\|_{\mathscr{S}^{2}_{2
\beta }(\mathbf{R}^{k})}^{2}+\left \|  \frac{\overline{f}}{a}\right
\|  _{\mathscr{M}^{2}_{\beta }(\mathbf{R}^{k})}^{2} +\left \|
\overline{g}\right \|  _{\mathscr{M}^{2}_{\beta }(\mathbf{R}^{k\times
\ell })}^{2}\right ).
\end{eqnarray*}
\end{lemma}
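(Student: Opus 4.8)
The plan is to apply the It\^{o}-type formula of Lemma \ref{lem1} (or its corollary) to the difference process $\overline{Y}$, which satisfies a relation of the required form with driver $\overline{\vartheta}_s = f^1(s)-f^2(s) =: \overline{f}(s)$, diffusion coefficients $\overline{g}(s)$ and $\overline{Z}_s$, jump part $\overline{U}_s$, and increasing-process part $\overline{K}, \overline{C}$. Writing the resulting identity between times $\tau$ (a generic stopping time) and $T$, and using $\overline{Y}_T = \overline{\xi}_T$, I get an expression for $e^{\beta A_\tau}|\overline{Y}_\tau|^2 + \beta\int_\tau^T e^{\beta A_s} a_s^2 |\overline{Y}_s|^2\,ds + \int_\tau^T e^{\beta A_s}|\overline{Z}_s|^2\,ds$ in terms of $e^{\beta A_T}|\overline{\xi}_T|^2$, the stochastic integrals (which are martingales by Proposition \ref{prop} and have zero expectation at fixed times), the cross term $\int_\tau^T e^{\beta A_s}|\overline{g}(s)|^2\,ds$, the boundary terms $2\int_\tau^T e^{\beta A_s}\langle \overline{Y}_{s-}, d\overline{K}_s\rangle + 2\int_\tau^T e^{\beta A_s}\langle \overline{Y}_s, d\overline{C}_s\rangle$, and the (nonpositive) jump sums $-\sum e^{\beta A_s}(\Delta \overline{Y}_s)^2 - \sum e^{\beta A_s}(\Delta_+\overline{Y}_s)^2$, which I simply drop.

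The key estimates are, in order: first, the boundary terms. Using the minimality (Skorokhod) conditions on $K^i$ and $C^i$ together with $Y^i_\tau \ge \xi^i_\tau$, one shows $\langle \overline{Y}_{s-}, d\overline{K}_s\rangle \le \langle \overline{\xi}_{s-}, d\overline{K}_s\rangle$ and $\langle \overline{Y}_s, d\overline{C}_s\rangle \le \langle \overline{\xi}_s, d\overline{C}_s\rangle$ in the appropriate integrated sense, so these terms are controlled by $\|\overline{\xi}\|$ against the total variations of $\overline{K}, \overline{C}$; by Corollary \ref{ap2} (or the Dellacherie--Meyer estimate) the $\mathscr{S}^2$-norms of $K^i$ and $C^i$ are in turn bounded by the data, and Young's inequality absorbs them. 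Second, for the driver term I use $|\overline{f}(s)| \le a_s \cdot |\overline{f}(s)/a_s|$ and split $2e^{\beta A_s}\langle \overline{Y}_{s-}, \overline{f}(s)\rangle \le \beta e^{\beta A_s} a_s^2|\overline{Y}_s|^2 + \tfrac{1}{\beta} e^{\beta A_s}|\overline{f}(s)/a_s|^2$, so the first piece is absorbed by the $\beta\int a_s^2|\overline{Y}_s|^2$ term on the left. Taking expectations and then the essential supremum over $\tau$ (after first going through the usual Burkholder--Davis--Gundy argument applied to the martingale part to control $\mathbf{E}\,\mathrm{ess\,sup}_\tau e^{\beta A_\tau}|\overline{Y}_\tau|^2$, i.e. the $\mathscr{S}^2_\beta$-norm) yields, for $\beta$ large enough, the stated inequality with a constant $\kappa(\beta)$.

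The main obstacle I expect is twofold: handling the boundary/reflection terms rigorously in the irregular (merely right-USC, l\`adl\`ag) setting — one must be careful that the products $(Y^i_{\tau-}-\xi^i_{\tau-})\Delta K^{d,i}_\tau$ and $(Y^i_\tau - \xi^i_\tau)\Delta C^i_\tau$ vanish exactly at the right predictable/optional times, so that the Skorokhod-type inequality for $\overline{K}$ (predictable part) and for $\overline{C}$ (optional purely-discontinuous part) both go through, and that the continuous part $K^{c,i}$ only charges $\{Y^i = \xi^i\}$; and closing the estimate requires that $\beta$ be chosen large enough (hence the hypothesis $\beta > 1$, and the constants $4/\beta$ appearing as in Remark \ref{rem02}) so that all the terms proportional to the left-hand side — including the contribution from the BDG constant and from $\alpha<1$ in the bound on $\overline{g}$ — can genuinely be absorbed, leaving a clean $\kappa(\beta)$ depending only on $\beta$.
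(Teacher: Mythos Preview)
Your overall strategy is right, but there is one genuine gap and two places where you diverge from the paper in ways that cost you the result.

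\textbf{The jump sums cannot be ``simply dropped''.} In Lemma~\ref{lem1} the left-hand side contains $\int e^{\beta A_s}|\overline{Z}_s|^2\,ds$ but \emph{not} $\int e^{\beta A_s}\|\overline{U}_s\|^2_\lambda\,ds$. The only way the $\overline{U}$-norm enters is through the identity (see Remark~\ref{rem02})
\[
\sum_{t<s\le T}e^{\beta A_s}(\Delta\overline{Y}_s)^2
=\int_t^T\!\!\int_E e^{\beta A_s}|\overline{U}_s(e)|^2\,\mu(ds,de)
+\sum_{t<s\le T}e^{\beta A_s}(\Delta\overline{K}_s)^2,
\]
which uses that $\overline{K}$ (predictable) and $\mu$ (totally inaccessible) do not jump together. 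Adding $\int e^{\beta A_s}\|\overline{U}_s\|^2_\lambda\,ds$ to both sides, the paper gets
\[
\int_t^T e^{\beta A_s}\|\overline{U}_s\|^2_\lambda\,ds-\sum_{t<s\le T}e^{\beta A_s}(\Delta\overline{Y}_s)^2
\le -\int_t^T\!\!\int_E e^{\beta A_s}|\overline{U}_s(e)|^2\,\widetilde{\mu}(ds,de),
\]
and the right-hand side is a martingale increment with zero expectation. If you simply discard $-\sum e^{\beta A_s}(\Delta\overline{Y}_s)^2$ as a nonpositive term, you never produce the $\|\overline{U}\|^2_{\mathscr{L}^2_\beta}$ term claimed in the lemma.

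\textbf{Boundary terms.} The paper does not bound $\langle\overline{Y}_{s-},d\overline{K}_s\rangle$ and $\langle\overline{Y}_s,d\overline{C}_s\rangle$ by $\overline{\xi}$ against the variation of $K^i,C^i$; it shows directly that both are $\le 0$ by expanding and using the Skorokhod/minimality conditions together with $Y^i\ge\xi^i$. Your route would require an a~priori bound on $K^i_T,C^i_T$ in terms of the data $(f^i,g^i,\xi^i)$, and after Young's inequality the constant would depend on those data, not only on $\beta$; that is not the statement.

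\textbf{Driver splitting.} Your split $2\langle\overline{Y},\overline{f}\rangle\le \beta a^2|\overline{Y}|^2+\frac{1}{\beta}|\overline{f}/a|^2$ consumes the entire $\beta\int e^{\beta A_s}a_s^2|\overline{Y}_s|^2\,ds$ on the left, leaving no $\|\overline{Y}\|^2_{\mathscr{M}^{2,a}_\beta}$-term. Since $\|\overline{Y}\|^2_{\mathscr{B}^2_\beta}=\|\overline{Y}\|^2_{\mathscr{S}^2_\beta}+\|\overline{Y}\|^2_{\mathscr{M}^{2,a}_\beta}$, you lose part of the conclusion. The paper uses the coefficient $(\beta-1)$ instead of $\beta$, which is exactly why the hypothesis $\beta>1$ appears.
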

\begin{proof}
Let $\tau \in \mathcal{T}_{[0,T]}$. It is obvious that the process
$\overline{Y}$ is an optional semimartingale\index{optional semimartingale} with the decomposition
$\overline{Y}_{\tau }=\overline{Y}_{0}+M_{\tau }+R_{\tau }+O_{\tau }$ where
$M_{\tau }=-\int _{0}^{\tau }\overline{g}(s)dB_{s}+\int _{0}^{\tau }
\overline{Z}_{s}dW_{s}+\int _{0}^{\tau }\int _{E}\overline{U}_{s}(e)
\widetilde{\mu }(ds,de)$,
$R_{\tau }=-\int _{0}^{\tau }\overline{f}(s)ds-\overline{K}_{\tau }$ and
$O_{\tau }=-\overline{C}_{\tau -}$. Then, from Lemma \ref{lem1}, we have
%
\begin{eqnarray}
\label{e4}
&&e^{\beta A_{t}}|\overline{Y}_{t}|^{2}+\beta \int _{t}^{T} e^{
\beta A_{s}} a^{2}_{s}|\overline{Y}_{s}|^{2}ds+\int _{t}^{T}e^{
\beta A_{s}}|\overline{Z}_{s}|^{2}ds
\nonumber
\\
&=&e^{\beta A_{T}}|\overline{\xi }_{T}|^{2}+ 2\int _{t}^{T}e^{\beta A_{s}}
\langle \overline{Y}_{s-},\overline{f}(s)\rangle ds+2\int _{t}^{T}e^{
\beta A_{s}}\langle \overline{Y}_{s-},d\overline{K}_{s}\rangle
\nonumber
\\
&&-2\int _{t}^{T}e^{\beta A_{s}}\langle \overline{Y}_{s-},
\overline{Z}_{s}dW_{s}\rangle -2\int _{t}^{T}\int _{E} e^{\beta A_{s}}
\langle \overline{Y}_{s-}, \overline{U}_{s}(e)\widetilde{\mu }(ds,de)
\rangle
\nonumber
\\
&&+2\int _{t}^{T}e^{\beta A_{s}}\langle \overline{Y}_{s-},
\overline{g}(s)dB_{s}\rangle +\int _{t}^{T}e^{\beta A_{s}}|
\overline{g}(s)|^{2}ds+2\int _{t}^{T}e^{\beta A_{s}}\langle
\overline{Y}_{s},d\overline{C}_{s}\rangle
\nonumber
\\
&&-\sum _{t<s\leq T}e^{\beta A_{s}}(\Delta \overline{Y}_{s})^{2}-
\sum _{t\leq s<T}e^{\beta A_{s}}(\Delta _{+} \overline{Y}_{s})^{2}.
\end{eqnarray}
From Remark \ref{rem02}, the processes $\overline{K}$ and $\mu $ do not
have jumps in common, but $\overline{K}$ jumps at predictable stopping
times\index{predictable stopping times} and $\mu $ jumps only at totally inaccessible stopping times.\index{totally inaccessible stopping times} Then
we can note that
\begin{equation*}
\sum _{t<s\leq T}e^{\beta A_{s}}(\Delta \overline{Y}_{s})^{2}=\int _{t}^{T}
\int _{E}e^{\beta A_{s}}|\overline{U}_{s}(e)|^{2}\mu (ds,de)+\sum _{t<s
\leq T}e^{\beta A_{s}}(\Delta \overline{K}_{s})^{2}.
\end{equation*}
Hence
\begin{eqnarray*}
&&\int _{t}^{T}e^{\beta A_{s}}\|\overline{U}_{s}\|^{2}_{\lambda }ds-
\sum _{t<s\leq T}e^{\beta A_{s}}(\Delta \overline{Y}_{s})^{2}
\\
&=&\int _{t}^{T}e^{\beta A_{s}}\|\overline{U}_{s}\|^{2}_{\lambda }ds-
\int _{t}^{T}\int _{E}e^{\beta A_{s}}|\overline{U}_{s}(e)|^{2}\mu (ds,de)-
\sum _{t<s\leq T}e^{\beta A_{s}}(\Delta \overline{K}_{s})^{2}
\\
&\leq &-\int _{t}^{T}\int _{E}e^{\beta A_{s}}|\overline{U}_{s}(e)|^{2}
\widetilde{\mu }(ds,de).
\end{eqnarray*}
On the other hand, by using the Skorokhod\index{Skorokhod condition} and minimality conditions\index{minimality condition} on
$\overline{K}$ and $\overline{C}$ we can show that
$\langle \overline{Y}_{s-},d\overline{K}_{s}\rangle \leq 0$ and
$\langle \overline{Y}_{s},d\overline{C}_{s}\rangle \leq 0$. Indeed, for
all $s\leq T$
\begin{eqnarray*}
\langle \overline{Y}_{s-},d\overline{K}_{s}\rangle &=&\langle Y^{1}_{s-}-
\xi _{s-},dK^{1,c}_{s}+\Delta K^{1,d}_{s}\rangle -\langle Y^{2}_{s-}-
\xi _{s-},dK^{1,c}_{s}+\Delta K^{1,d}_{s}\rangle
\\
&&-\langle Y^{1}_{s-}-\xi _{s-},dK^{2,c}_{s}+\Delta K^{2,d}_{s}
\rangle +\langle Y^{2}_{s-}-\xi _{s-},dK^{2,c}_{s}+\Delta K^{2,d}_{s}
\rangle
\\
&=&-\langle Y^{2}_{s-}-\xi _{s-},dK^{1,c}_{s}+\Delta K^{1,d}_{s}
\rangle -\langle Y^{1}_{s-}-\xi _{s-},dK^{2,c}_{s}+\Delta K^{2,d}_{s}
\rangle
\\
&\leq &0, \quad \text{ since }Y^{i}\geq \xi \text{ for }i=1,2.
\end{eqnarray*}
Furthermore we have
$\langle \overline{Y}_{s},d\overline{C}_{s}\rangle =\langle
\overline{Y}_{s},\Delta \overline{C}_{s}\rangle $, and by the same arguments,
we have, for all $s\leq T$,
\begin{eqnarray*}
\langle \overline{Y}_{s},\Delta \overline{C}_{s}\rangle &=&\langle Y^{1}_{s}-
\xi _{s},\Delta C^{1}_{s}\rangle -\langle Y^{2}_{s}-\xi _{s},\Delta C^{1}_{s}
\rangle -\langle Y^{1}_{s}-\xi _{s},\Delta C^{2}_{s}\rangle
\\
&&-\langle \xi _{s}-Y^{2}_{s},\Delta C^{2}_{s}\rangle
\\
&=&0-\langle Y^{2}_{s}-\xi _{s},\Delta C^{1}_{s}\rangle -\langle Y^{1}_{s}-
\xi _{s},\Delta C^{2}_{s}\rangle -0
\\
&\leq &0, \quad \text{ since }Y^{i}\geq \xi \text{ for }i=1,2.
\end{eqnarray*}
Moreover, by using the fact that
\begin{equation*}
2\langle \overline{Y}_{s},\overline{f}(s)\rangle \leq (\beta -1)a_{s}^{2}|
\overline{Y}_{s}|^{2}+\frac{1}{\beta -1}
\frac{|\overline{f}(s)|^{2}}{a_{s}^{2}} \quad \forall \beta >1,
\end{equation*}
the inequality \eqref{e4} becomes
%
\begin{eqnarray}
\label{e5}
&&e^{\beta A_{t}}|\overline{Y}_{t}|^{2}+\int _{t}^{T} e^{\beta A_{s}}
a^{2}_{s}|\overline{Y}_{s}|^{2}ds+\int _{t}^{T}e^{\beta A_{s}}|
\overline{Z}_{s}|^{2} ds+\int _{t}^{T}e^{\beta A_{s}}\|\overline{U}_{s}
\|^{2}_{\lambda }ds
\nonumber
\\
&\leq &\operatorname*{ess
\,sup}_{\tau \in \mathcal{T}_{[0,T]}}e^{2\beta A_{\tau }}|
\overline{\xi }_{\tau }|^{2}+\frac{1}{\beta -1}\int _{t}^{T}e^{
\beta A_{s}}\left |\frac{\overline{f}(s)}{a_{s}}\right |^{2}ds-2
\int _{t}^{T}e^{\beta A_{s}}\langle \overline{Y}_{s-},\overline{Z}_{s}dW_{s}
\rangle
\nonumber
\\
&&-2\int _{t}^{T}\int _{E} e^{\beta A_{s}}\langle \overline{Y}_{s-},
\overline{U}_{s}(e)\widetilde{\mu }(ds,de)\rangle +2\int _{t}^{T}e^{
\beta A_{s}}\langle \overline{Y}_{s-},\overline{g}(s)dB_{s}\rangle
\nonumber
\\
&&+\int _{t}^{T}e^{\beta A_{s}}|\overline{g}(s)|^{2}ds.
\end{eqnarray}
Taking the expectation on the both sides of the inequality (\ref{e5}) and
using Proposition \ref{prop}, we get, for all $\beta >1$,
%
\begin{eqnarray}
\label{e6}
&&\|\overline{Y}\|_{\mathscr{M}^{2,a}_{\beta }(\mathbf{R}^{k})}^{2}+
\|\overline{Z}\|_{\mathscr{M}^{2}_{\beta }(\mathbf{R}^{k\times d})}^{2}+
\|\overline{U}\|_{\mathscr{L}^{2}_{\beta }(\mathbf{R}^{k})}^{2}
\nonumber
\\
&\leq & \|\overline{\xi }\|_{\mathscr{S}^{2}_{2\beta }(\mathbf{R}^{k})}^{2}+
\frac{1}{\beta -1}\left \|  \frac{\overline{f}}{a}\right \|  _{
\mathscr{M}^{2}_{\beta }(\mathbf{R}^{k})}^{2} +\left \|  \overline{g}
\right \|  _{\mathscr{M}^{2}_{\beta }(\mathbf{R}^{k\times \ell })}^{2}.
\end{eqnarray}
On the other hand, by taking the essential supremum over
$\tau \in \mathcal{T}_{[0,T]}$ and then the expectation on both sides of
inequality (\ref{e5}) we obtain
\begin{eqnarray*}
&&{\mathbf{E}}\operatorname*{ess
\,sup}_{\tau \in \mathcal{T}_{[0,T]}}e^{\beta A_{\tau }}|\overline{Y}_{
\tau }|^{2}
\\
&&%
\hspace{-0.75cm}%
\leq {\mathbf{E}}\operatorname*{ess
\,sup}_{\tau \in \mathcal{T}_{[0,T]}}e^{2\beta A_{\tau }}|
\overline{\xi }_{\tau }|^{2}+\frac{1}{\beta -1}{\mathbf{E}}\int _{0}^{T}e^{
\beta A_{s}}\left |\frac{\overline{f}(s)}{a_{s}}\right |^{2}ds
\\
&&+2{\mathbf{E}}\operatorname*{ess
\,sup}_{\tau \in \mathcal{T}_{[0,T]}}\left |\int _{0}^{\tau }e^{\beta A_{s}}
\langle \overline{Y}_{s-},\overline{Z}_{s}dW_{s}\rangle \right |
\\
&&+2{\mathbf{E}}\operatorname*{ess
\,sup}_{\tau \in \mathcal{T}_{[0,T]}}\left |\int _{0}^{\tau }\int _{E} e^{
\beta A_{s}}\langle \overline{Y}_{s-}, \overline{U}_{s}(e)
\widetilde{\mu }(ds,de)\rangle \right |
\\
&&+2{\mathbf{E}}\operatorname*{ess
\,sup}_{\tau \in \mathcal{T}_{[0,T]}}\left |\int _{0}^{\tau }e^{\beta A_{s}}
\langle \overline{Y}_{s-},\overline{g}(s)dB_{s}\rangle \right | +
\int _{0}^{T}e^{\beta A_{s}}|\overline{g}(s)|^{2}ds.
\end{eqnarray*}
From the Burkh\"{o}lder--Davis--Gundy inequality, there exists a universal
constant $c$ such that
\begin{eqnarray*}
2{\mathbf{E}}\operatorname*{ess
\,sup}_{\tau \in \mathcal{T}_{[0,T]}}\left |\int _{0}^{\tau }e^{\beta A_{s}}
\langle \overline{Y}_{s-},\overline{Z}_{s}dW_{s}\rangle \right | &
\leq & 2c{\mathbf{E}}\sqrt{\int _{0}^{T} e^{2\beta A_{s}}|\overline{Y}_{s-}|^{2}|
\overline{Z}_{s}|^{2}ds}
\\
&\leq &\frac{1}{4}\|\overline{Y}\|_{\mathscr{S}^{2}_{\beta }(
\mathbf{R}^{k})}^{2}+4c^{2}\|\overline{Z}\|_{\mathscr{M}^{2}_{\beta }(
\mathbf{R}^{k\times d})}^{2},
\end{eqnarray*}
\begin{eqnarray*}
&&2{\mathbf{E}}\operatorname*{ess
\,sup}_{\tau \in \mathcal{T}_{[0,T]}}\left |\int _{0}^{\tau }\int _{E} e^{
\beta A_{s}}\langle \overline{Y}_{s-}, \overline{U}_{s}(e)
\widetilde{\mu }(ds,de)\rangle \right |
\\
&\leq & 2c{\mathbf{E}}\sqrt{\int _{0}^{T} e^{2\beta A_{s}}|\overline{Y}_{s-}|^{2}
\|\overline{U}_{s}\|_{\lambda }^{2}ds} \;\;\leq \;\;\frac{1}{4}\|
\overline{Y}\|_{\mathscr{S}^{2}_{\beta }(\mathbf{R}^{k})}^{2}+4c^{2}
\|\overline{U}\|_{\mathscr{L}^{2}_{\beta }(\mathbf{R}^{k})}^{2}
\end{eqnarray*}
and
\begin{eqnarray*}
2{\mathbf{E}}\operatorname*{ess
\,sup}_{\tau \in \mathcal{T}_{[0,T]}}\left |\int _{0}^{\tau }e^{\beta A_{s}}
\langle \overline{Y}_{s-},\overline{g}(s)dB_{s}\rangle \right | &
\leq & 2c{\mathbf{E}}\sqrt{\int _{0}^{T} e^{2\beta A_{s}}|\overline{Y}_{s-}|^{2}|
\overline{g}(s)|^{2}ds}
\\
&\leq &\frac{1}{4}\|\overline{Y}\|_{\mathscr{S}^{2}_{\beta }(
\mathbf{R}^{k})}^{2}+4c^{2}\|\overline{g}\|_{\mathscr{M}^{2}_{\beta }(
\mathbf{R}^{k\times \ell })}^{2}.
\end{eqnarray*}
Consequently,
%
\begin{eqnarray}
\label{e7}
\|\overline{Y}\|_{\mathscr{S}^{2}_{\beta }(\mathbf{R}^{k})}^{2} &
\leq &4\left (\|\overline{\xi }\|_{\mathscr{S}^{2}_{2\beta }(
\mathbf{R}^{k})}^{2}+\frac{1}{\beta -1}\left \|
\frac{\overline{f}}{a}\right \|  _{\mathscr{M}^{2}_{\beta }(\mathbf{R}^{k})}^{2}
+(4c^{2}+1)\left \|  \overline{g}\right \|  _{\mathscr{M}^{2}_{\beta }(
\mathbf{R}^{k\times \ell })}^{2}\right .
\nonumber
\\
&&\left .+4c^{2}\|\overline{Z}\|_{\mathscr{M}^{2}_{\beta }(\mathbf{R}^{k
\times d})}^{2} +4c^{2}\|\overline{U}\|_{\mathscr{L}^{2}_{\beta }(
\mathbf{R}^{k})}^{2}\right ).
\end{eqnarray}
The desired result is obtained by combining the estimates (\ref{e6}) and (\ref{e7})
for $\beta >1$.
\end{proof}

In the following, we state the existence and uniqueness result for the solution
to\break  RBDSDEJ \eqref{eq1}.

\begin{proposition}
\label{pro1}
Under the assumptions \emph{\textbf{(A1.4)}} and \emph{\textbf{(A1.5)}}, the RBDSDEJ \eqref{eq1} admits a unique solution
$(Y,Z,U,K,C)\in \mathscr{B}^{2}_{\beta }(\mathbf{R}^{k})\times
\mathscr{S}^{2}(\mathbf{R}^{k})\times \mathscr{S}^{2}(\mathbf{R}^{k})$ for all $\beta>1$,
and for each $\nu \in \mathcal{T}_{[0,T]}$ we have
\begin{equation*}
Y_{\nu }=\operatorname*{ess
\,sup}_{\tau \in \mathcal{T}_{[\nu ,T]}}{\mathbf{E}}\left [\xi _{\tau }+
\int _{\nu }^{\tau }f(t)dt +\int _{\nu }^{\tau }g(t)dB_{t}| \mathcal{G}_{\nu
}\right ]\quad a.s.
\end{equation*}
\end{proposition}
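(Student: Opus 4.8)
The plan is to prove existence and uniqueness separately, deriving the explicit optimal-stopping representation along the way. Uniqueness is immediate from Lemma~\ref{ll}: if $(Y^1,Z^1,U^1,K^1,C^1)$ and $(Y^2,Z^2,U^2,K^2,C^2)$ both solve RBDSDEJ~\eqref{eq1} with the \emph{same} data $(f,g,\xi)$, then $\overline f=0$, $\overline g=0$, $\overline\xi=0$, so the right-hand side of the estimate vanishes and $\|\overline Y\|^2_{\mathscr{B}^2_\beta}+\|\overline Z\|^2_{\mathscr{M}^2_\beta}+\|\overline U\|^2_{\mathscr{L}^2_\beta}=0$; this forces $Y^1=Y^2$, $Z^1=Z^2$, $U^1=U^2$ (up to the usual identifications), and then from the equation~\eqref{eq1} the processes $K^1-C^1$ and $K^2-C^2$ agree, whence by the uniqueness in the Mertens decomposition (Theorem~\ref{ap1}) we get $K^1=K^2$ and $C^1=C^2$.

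For existence, the first step is to introduce the $\mathcal{G}_t$-optional process
\begin{equation*}
\widetilde Y_\nu := \operatorname*{ess\,sup}_{\tau\in\mathcal{T}_{[\nu,T]}}{\mathbf{E}}\!\left[\xi_\tau+\int_\nu^\tau f(t)\,dt+\int_\nu^\tau g(t)\,dB_t\,\Big|\,\mathcal{G}_\nu\right],
\end{equation*}
equivalently, setting $\widehat\xi_t:=\xi_t+\int_0^t f(s)\,ds+\int_0^t g(s)\,dB_s$ and $\widehat Y_t:=\widetilde Y_t+\int_0^t f(s)\,ds+\int_0^t g(s)\,dB_s$, the process $\widehat Y$ is the Snell envelope (in the sense of strong optional supermartingales) of the optional process $\widehat\xi$. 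The integrability assumptions \textbf{(A1.4)} and \textbf{(A1.5)} give, via H\"older's inequality and $\int_0^t f(s)\,ds=\int_0^t a_s\frac{f(s)}{a_s}\,ds$ together with the It\^o isometry for $\int_0^\cdot g(s)\,dB_s$, that $\widehat\xi$ is of class (D) and that the Snell envelope $\widehat Y$ is a strong optional supermartingale of class (D) dominating $\widehat\xi$; moreover $\widehat\xi$ inherits right upper semi-continuity from $\xi$, which is exactly the regularity needed for the optimal-stopping theory over $\mathcal{T}_{[\nu,T]}$ to apply. Then I would apply the Mertens decomposition (Theorem~\ref{ap1}) to $\widehat Y$: $\widehat Y_t=N_t-K_t-C_{t-}$ with $N$ a c\`adl\`ag uniformly integrable martingale, $K$ nondecreasing right-continuous predictable with $K_0=0$ and $C$ nondecreasing right-continuous purely discontinuous adapted with $C_{0-}=0$. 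Representing the martingale $N-{\mathbf E}[\widehat\xi_T\,|\,\mathcal{G}_\cdot]$ through the filtration $(\mathcal{G}_t)$, which is generated by $W$, $B$ (all of $\mathscr{F}^B_T$ being in $\mathcal{G}_0$ so $B$ contributes nothing beyond $t=0$) and the Poisson measure $\mu$, and which is right-continuous and quasi-left-continuous, yields a martingale representation $N_t=N_0+\int_0^t Z_s\,dW_s+\int_0^t\int_E U_s(e)\,\widetilde\mu(ds,de)$ for some $Z\in\mathscr{M}^2_\beta$, $U\in\mathscr{L}^2_\beta$; unwinding the definitions gives that $Y:=\widetilde Y$ together with $(Z,U,K,C)$ satisfies the forward form of the equation in~\eqref{eq1}, and taking $\tau=0$ vs.\ $\tau\in\mathcal{T}_{[0,T]}$ produces the backward form.

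The second step is to check the three Skorokhod/minimality conditions, which is where the optimal-stopping characterization of the Mertens process does the work. The condition $Y_\tau\geq\xi_\tau$ is immediate since $\widehat Y\geq\widehat\xi$. For $\int_0^T\mathbh{1}_{\{Y_t>\xi_t\}}\,dK^c_t=0$ one uses that on the (predictable) set where $\widehat Y>\widehat\xi$ the Snell envelope is a local martingale, so its predictable nondecreasing part $K^c$ does not increase there; the jump conditions $(Y_{\tau-}-\xi_{\tau-})\Delta K^d_\tau=0$ for predictable $\tau$ and $(Y_\tau-\xi_\tau)\Delta C_\tau=0$ for all stopping times $\tau$ follow from the known structure of the Mertens decomposition of a Snell envelope — $C$ jumps only where $\widehat Y$ has a downward right-jump, which by optimal-stopping theory can happen only at times where $\widehat Y_t=\widehat\xi_t$ (i.e.\ $Y_t=\xi_t$), and the predictable jumps of $\widehat Y$ which are not right-jumps occur only where $\widehat Y_{\tau-}=\widehat\xi_{\tau-}$ (i.e.\ $Y_{\tau-}=\xi_{\tau-}$), these being the source of $K^d$. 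Finally, $(Y,Z,U)\in\mathscr{B}^2_\beta$ and $(K,C)\in\mathscr{S}^2\times\mathscr{S}^2$ follow from the class-(D) bound on $\widehat Y$ combined with Corollary~\ref{ap2} (applied to get ${\mathbf E}|K_T|^2+{\mathbf E}|C_T|^2<\infty$ from a dominating random variable $X$ built out of $\sup_\tau|\xi_\tau|$, $\int_0^T|f(s)/a_s|^2\,ds$ and $\int_0^T|g(s)|^2\,ds$) and then the a~priori estimate of Lemma~\ref{ll} (used with $\Re^2\equiv0$) to control $Z$ and $U$. The main obstacle I anticipate is the careful justification of the martingale representation and of the jump-structure of $C$ and $K$ in this non-right-continuous, quasi-left-continuous filtration setting: one must invoke precisely the optimal-stopping results for strong optional supermartingales (for which right upper semi-continuity of $\widehat\xi$ is essential) to locate the jumps of $\widehat Y$ relative to the barrier, rather than relying on the classical c\`adl\`ag Snell-envelope arguments.
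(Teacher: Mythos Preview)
Your proposal is correct and follows essentially the same route as the paper: Snell-envelope construction of the candidate $Y$, Mertens decomposition of the shifted supermartingale, martingale representation to obtain $(Z,U)$, verification of the Skorokhod/minimality conditions via optimal-stopping results (the paper cites Remark~A.4 in \cite{GIOOQ:2017} and Proposition~B.11 in \cite{KQ:2012} for precisely the jump-location facts you describe), Corollary~\ref{ap2} for the $\mathscr{S}^2$-bounds on $(K,C)$, and uniqueness from Lemma~\ref{ll}.

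One small presentational point: invoking Lemma~\ref{ll} with $\Re^2\equiv 0$ to control $Z$ and $U$ is slightly circular, since the hypotheses of that lemma already assume $(Y,Z,U)\in\mathscr{B}^2_\beta$. The paper instead redoes the Gal'chouk--Lenglart computation of Lemma~\ref{lem1} directly for the candidate $\overline{\overline Y}$, bounding $\|Z\|^2_{\mathscr{M}^2_\beta}+\|U\|^2_{\mathscr{L}^2_\beta}$ by $\|\xi\|^2_{\mathscr{S}^2_{2\beta}}+\|f/a\|^2_{\mathscr{M}^2_\beta}+\|g\|^2_{\mathscr{M}^2_\beta}+{\mathbf E}|K_T|^2+{\mathbf E}|C_T|^2$; this is the same estimate underlying Lemma~\ref{ll}, just applied without presupposing the conclusion. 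In practice both versions need a localization step (the martingale property of the stochastic integrals uses Proposition~\ref{prop}, which itself assumes the weighted integrability), so the difference is cosmetic.
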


\begin{proof}
Let $\nu \in \mathcal{T}_{[0,T]}$. We define the value function
$\overline{\overline{Y}}(\nu )$ by
\begin{equation*}
\overline{\overline{Y}}(\nu )=\operatorname*{ess
\,sup}_{\tau \in \mathcal{T}_{[\nu ,T]}}{\mathbf{E}}\left [\xi _{\tau }+
\int _{\nu }^{\tau }f(t)dt +\int _{\nu }^{\tau }g(t)dB_{t}| \mathcal{G}_{\nu
}\right ],
\end{equation*}
and $\widetilde{Y}(\nu )$ by
\begin{eqnarray*}
\widetilde{Y}(\nu )&=&\overline{\overline{Y}}(\nu )+\int _{0}^{\nu }f(t)dt
+\int _{0}^{\nu }g(t)dB_{t}
\\
&=&\operatorname*{ess
\,sup}_{\tau \in \mathcal{T}_{[\nu ,T]}}{\mathbf{E}}\left [\xi _{\tau }+
\int _{0}^{\tau }f(t)dt+\int _{0}^{\tau }g(t)dB_{t}| \mathcal{G}_{\nu
}\right ].
\end{eqnarray*}
The process
$\left (\xi _{t}+\int _{0}^{t} f(s)ds +\int _{0}^{t} g(s)dB_{s}
\right )_{t\leq T}$ is progressively measurable. Therefore, the family
$(\widetilde{Y}(\nu ))_{\nu \in \mathcal{T}_{[0,T]}}$ is a supermartingale
family. This observation with the Remark \textbf{b.} page 435 in
\cite{DM} ensures the existence of a strong optional supermartingale
$\widetilde{Y}$\index{optional supermartingale} such that
$\widetilde{Y}_{\nu }=\widetilde{Y}(\nu )$ for all
$\nu \in \mathcal{T}_{[0,T]}$. Thus, we have
$\overline{\overline{Y}}(\nu )=\widetilde{Y}_{\nu }-\int _{0}^{\nu }f(t)dt
-\int _{0}^{\nu }g(t)dB_{t}$. On the other hand, almost all trajectories
of the strong optional supermartingale\index{optional supermartingale} are l\`{a}dl\`{a}g, then the l\`{a}dl\`{a}g
optional process
$(\overline{\overline{Y}}_{t})_{t\leq T}:=\left (\widetilde{Y}_{t}-
\int _{0}^{t} f(s)ds -\int _{0}^{t} g(s)dB_{s}\right )_{t\leq T}$ aggregates
the family
$(\overline{\overline{Y}}(\nu ))_{\nu \in \mathcal{T}_{[0,T]}}$.

Now, it remains to show that the candidate
$\overline{\overline{Y}}\in \mathscr{S}^{2}_{\beta }(\mathbf{R}^{k})$.
Using the Jensen's, Young's and H\"{o}lder's inequalities respectively, we
obtain
\begin{eqnarray*}
&&e^{\frac{\beta }{2}A_{\nu }}|\overline{\overline{Y}}_{\nu }|
\\
&=&\Biggl |\operatorname*{ess
\,sup}_{\tau \in \mathcal{T}_{[\nu ,T]}}{\mathbf{E}}\Biggl [e^{
\frac{\beta }{2}A_{\nu }}\xi _{\tau }+e^{\frac{\beta }{2}A_{\nu }}\int _{\nu }^{\tau }f(t)dt +e^{\frac{\beta }{2}A_{\nu }}\int _{\nu }^{\tau }g(t)dB_{t}|
\mathcal{G}_{\nu }\Biggr ]\Biggr |
\\
&\leq &{\mathbf{E}}\Biggl [\Biggl \{  \Biggl |\operatorname*{ess
\,sup}_{\tau \in \mathcal{T}_{[\nu ,T]}}e^{\frac{\beta }{2}A_{\nu }}
\xi _{\tau }+e^{\frac{\beta }{2}A_{\nu }}\int _{\nu }^{T} f(t)dt
\\
&&
\hspace{4.5cm}
+\operatorname*{ess
\,sup}_{\tau \in \mathcal{T}_{[\nu ,T]}}e^{\frac{\beta }{2}A_{\nu }}
\int _{\nu }^{\tau }g(t)dB_{t}\Biggr |^{2}\Biggr \}  ^{\frac{1}{2}}|
\mathcal{G}_{\nu }\Biggr ]
\\
&\leq &\sqrt{3}{\mathbf{E}}\Biggl [\Biggl \{  \operatorname*{ess
\,sup}_{\tau \in \mathcal{T}_{[\nu ,T]}}e^{\beta A_{\tau }}|\xi _{\tau }|^{2}+e^{
\beta A_{\nu }}\Biggl |\int _{\nu }^{T} f(t)dt\Biggr |^{2}
\\
&&
\hspace{4.5cm}
+\operatorname*{ess
\,sup}_{\tau \in \mathcal{T}_{[\nu ,T]}}e^{\beta A_{\nu }}\Biggl |\int _{\nu }^{\tau }g(t)dB_{t}\Biggr |^{2}\Biggr \}  ^{\frac{1}{2}}|\mathcal{G}_{\nu }\Biggr ]
\\
&\leq &\sqrt{3}{\mathbf{E}}\Biggl [\Biggl \{  \operatorname*{ess
\,sup}_{\tau \in \mathcal{T}_{[\nu ,T]}}e^{2\beta A_{\tau }}|\xi _{\tau }|^{2}+e^{
\beta A_{\nu }}\Biggl (\int _{\nu }^{T}e^{-\beta A_{t}}a_{t}^{2}dt\Biggr )
\times
\\
&&
\hspace{2.5cm}
\Biggl (\int _{\nu }^{T}e^{\beta A_{t}} \Biggl |\frac{f(t)}{a_{t}}\Biggr |^{2}dt
\Biggr )+c\int _{0}^{T}e^{\beta A_{t}} |g(t)|^{2}dt\Biggr \}  ^{\frac{1}{2}}|\mathcal{G}_{\nu }\Biggr ]
\\
&\leq &\sqrt{3}{\mathbf{E}}\Biggl [\Biggl \{  \operatorname*{ess
\,sup}_{\tau \in \mathcal{T}_{[0,T]}}e^{2\beta A_{\tau }}|\xi _{\tau }|^{2}+
\frac{1}{\beta }\int _{0}^{T} e^{\beta A_{t}}\Biggl |
\frac{f(t)}{a_{t}}\Biggr |^{2}dt
\\
&&
\hspace{6cm}
+c\int _{0}^{T}e^{\beta A_{t}} |g(t)|^{2}dt\Biggr \}  ^{\frac{1}{2}}|
\mathcal{G}_{\nu }\Biggr ].
\end{eqnarray*}
Taking the essential supremum over $\nu \in \mathcal{T}_{[0,T]}$ on the
above sides and using the Doob's martingale inequality, we conclude that
\begin{eqnarray*}
&&{\mathbf{E}}\operatorname*{ess
\,sup}_{\nu \in \mathcal{T}_{[0,T]}} e^{\beta A_{\nu }}|
\overline{\overline{Y}}_{\nu }|^{2}
\\
&\leq & \kappa '(\beta ){\mathbf{E}}\left ( \operatorname*{ess
\,sup}_{\tau \in \mathcal{T}_{[\nu ,T]}}e^{2\beta A_{\tau }}|\xi _{\tau }|^{2}+
\int _{0}^{T} e^{\beta A_{t}}\left |\frac{f(t)}{a_{t}}\right |^{2}dt+
\int _{0}^{T}e^{\beta A_{t}} |g(t)|^{2}dt\right )
\end{eqnarray*}
where $\kappa '(\beta )$ is a positive constant depending on $\beta $. It
follows that
$\overline{\overline{Y}}\in \mathscr{S}^{2}_{\beta }(\mathbf{R}^{k})$.

Note that the strong optional supermartingale $\widetilde{Y}$\index{optional supermartingale} is of class
(D) (i.e. the set of all random variables $\widetilde{Y}_{\nu }$, for each
finite stopping time $\nu $, is uniformly integrable). Then by the Mertens
decomposition\index{Mertens decomposition} (see Theorem \ref{ap1}), there exists a uniformly integrable
martingale (c\`{a}dl\`{a}g) $N$, a nondecreasing right-continuous predictable
process $K$ (with $K_{0}=0$) such that ${\mathbf{E}}|K_{T}|^{2}<+\infty $ and
a nondecreasing right-continuous adapted purely discontinuous process
$C$ (with $C_{0-}=0$) such that ${\mathbf{E}}|C_{T}|^{2}<+\infty $, 
with the following equality:
\begin{equation*}
\widetilde{Y}_{\tau }=N_{\tau }-K_{\tau }-C_{\tau -}\quad \forall \tau
\in \mathcal{T}_{[0,T]}.
\end{equation*}
By an extension of It\^{o}'s martingale representation Theorem, there exists
a unique pair of predictable processes
$(Z,U)\in \mathscr{M}^{2}(\mathbf{R}^{k\times d})\times \mathscr{L}^{2}(
\mathbf{R}^{k})$ such that
\begin{equation*}
N_{\tau }=N_{0}+\int _{0}^{\tau }Z_{s}dW_{s}+\int _{0}^{\tau }\int _{E} U_{s}(e)
\widetilde{\mu }(ds,de).
\end{equation*}
Hence for each $\tau \in \mathcal{T}_{[0,T]}$
%
\begin{eqnarray}
\label{e9}
\overline{\overline{Y}}_{\tau }&=&-\int _{0}^{\tau }f(s)ds-\int _{0}^{\tau }g(s)dB_{s}+N_{0}+\int _{0}^{\tau }Z_{s}dW_{s}
\nonumber
\\
&&-\int _{0}^{\tau }\int _{E} U_{s}(e)\widetilde{\mu }(ds,de)-K_{\tau }-C_{
\tau -}
\end{eqnarray}
with
$\overline{\overline{Y}}_{T}=\overline{\overline{Y}}(T)=\xi _{T}$ and
$\overline{\overline{Y}}_{\tau }=\overline{\overline{Y}}(\tau )
\geq \xi _{\tau }$ a.s for all $\tau \in \mathcal{T}_{[0,T]}$. Next, let
us focus on the Skorokhod\index{Skorokhod condition} and minimality conditions\index{minimality condition}. Since
$\Delta _{+}\overline{\overline{Y}}_{\tau }=\mathbh{1}_{\{
\overline{\overline{Y}}_{\tau }=\xi _{\tau }\}}\Delta _{+}
\overline{\overline{Y}}_{\tau }$ a.s.(see Remark A.4 in
\cite{GIOOQ:2017}), 
from (\ref{e9}) we have
$\Delta C_{\tau }=-\Delta _{+}\overline{\overline{Y}}_{\tau }$ a.s., then
$\Delta C_{\tau }=\mathbh{1}_{\{\overline{\overline{Y}}_{\tau }=\xi _{\tau }\}}\Delta C_{\tau }$ a.s. It follows that the minimality condition\index{minimality condition} on
$C$ is satisfied. Further, 
due to a result 
from the optimal stopping
theory (see Proposition B.11 in \cite{KQ:2012}), for each predictable stopping
time $\tau $, we have
$\int _{0}^{T}\mathbh{1}_{\{\overline{\overline{Y}}_{t}>\xi _{t}\}}dK^{c}_{t}=0$
a.s. and
$\Delta K^{d}_{\tau }=\mathbh{1}_{\{\overline{\overline{Y}}_{\tau -}=
\xi _{\tau -}\}}\Delta K^{d}_{\tau }$ a.s. Then the process $K$ satisfies
the Skorokhod condition.\index{Skorokhod condition} 
Thus, we found a process
$(\overline{\overline{Y}},Z,U,K,C)$ which satisfies the RBDSDEJ \eqref{eq1}.

Now, it remains to show that
$(\overline{\overline{Y}},Z,U,K,C)\in \mathscr{B}^{2}_{\beta }(
\mathbf{R}^{k})\times \mathscr{S}^{2}(\mathbf{R}^{k})\times
\mathscr{S}^{2}(\mathbf{R}^{k})$. Indeed, let
$\widetilde{K}_{t}:=K_{t}+C_{t-}$  
be the Mertens process\index{Mertens process} associated with
$\widetilde{Y}$. By the definition of $\widetilde{Y}_{\nu }$, we see that
\begin{eqnarray*}
|\widetilde{Y}_{\nu }|&=&\left |\operatorname*{ess
\,sup}_{\tau \in \mathcal{T}_{[\nu ,T]}}{\mathbf{E}}\left [\xi _{\tau }+
\int _{0}^{\tau }f(t)dt +\int _{0}^{\tau }g(t)dB_{t}|\mathcal{G}_{\nu
}\right ]\right |
\\
&\leq &{\mathbf{E}}\left [\operatorname*{ess
\,sup}_{\tau \in \mathcal{T}_{[\nu ,T]}}|\xi _{\tau }|+\int _{0}^{T} |f(t)|dt+
\operatorname*{ess
\,sup}_{\tau \in \mathcal{T}_{[\nu ,T]}}\left |\int _{0}^{\tau }g(t)dB_{t}
\right ||\mathcal{G}_{\nu }\right ].
\end{eqnarray*}
From Corollary \ref{ap2}, there exists a positive constant $c$ such
that
\begin{eqnarray*}
{\mathbf{E}}|\widetilde{K}_{T}|^{2}&\leq &c{\mathbf{E}}\left |
\operatorname*{ess
\,sup}_{\tau \in \mathcal{T}_{[\nu ,T]}}|\xi _{\tau }|+\int _{0}^{T} |f(t)|dt+
\operatorname*{ess
\,sup}_{\tau \in \mathcal{T}_{[\nu ,T]}}\left |\int _{0}^{\tau }g(t)dB_{t}
\right |\right |^{2}
\\
&\leq & c(\beta )\left (\|\xi \|_{\mathscr{S}^{2}_{2\beta }(
\mathbf{R}^{k})}^{2}+\left \|  \frac{f}{a}\right \|  _{\mathscr{M}^{2}_{\beta }(\mathbf{R}^{k})}^{2} +\left \|  g\right \|  _{\mathscr{M}^{2}_{\beta }(\mathbf{R}^{k\times \ell })}^{2}\right )
\end{eqnarray*}
where $c(\beta )$ is a positive constant depending on $\beta $. 
$\widetilde{K}$ is nondecreasing, and it implies that
\begin{equation*}
{\mathbf{E}}\operatorname*{ess
\,sup}_{\nu \in \mathcal{T}_{[0,T]}}|\widetilde{K}_{\tau }|^{2}\leq {
\mathbf{E}}|\widetilde{K}_{T}|^{2}<+\infty .
\end{equation*}
It follows that $\widetilde{K}\in \mathscr{S}^{2}(\mathbf{R}^{k})$, then
$(K,C)\in \mathscr{S}^{2}(\mathbf{R}^{k})\times \mathscr{S}^{2}(
\mathbf{R}^{k})$. On the other hand, from Lemma \ref{lem1} we have
\begin{eqnarray*}
&&e^{\beta A_{t}}|\overline{\overline{Y}}_{t}|^{2}+\beta \int _{t}^{T}
e^{\beta A_{s}} a^{2}_{s}|\overline{\overline{Y}}_{s}|^{2}ds+\int _{t}^{T}e^{
\beta A_{s}}|Z_{s}|^{2}ds
\nonumber
\\
&=&e^{\beta A_{T}}|\xi _{T}|^{2}+ 2\int _{t}^{T}e^{\beta A_{s}}
\langle \overline{\overline{Y}}_{s-},f(s)\rangle ds+2\int _{t}^{T}e^{
\beta A_{s}}\langle \overline{\overline{Y}}_{s-},dK_{s}\rangle
\nonumber
\\
&&-2\int _{t}^{T}e^{\beta A_{s}}\langle \overline{\overline{Y}}_{s-},Z_{s}dW_{s}
\rangle -2\int _{t}^{T}\int _{E} e^{\beta A_{s}}\langle
\overline{\overline{Y}}_{s-}, U_{s}(e)\widetilde{\mu }(ds,de)
\rangle
\nonumber
\\
&&+2\int _{t}^{T}e^{\beta A_{s}}\langle \overline{\overline{Y}}_{s-},g(s)dB_{s}
\rangle +\int _{t}^{T}e^{\beta A_{s}}|g(s)|^{2}ds+2\int _{t}^{T}e^{
\beta A_{s}}\langle \overline{\overline{Y}}_{s},dC_{s}\rangle
\nonumber
\\
&&-\sum _{t<s\leq T}e^{\beta A_{s}}(\Delta \overline{\overline{Y}}_{s})^{2}-
\sum _{t\leq s<T}e^{\beta A_{s}}(\Delta _{+}
\overline{\overline{Y}}_{s})^{2}.
\end{eqnarray*}
From Remark \ref{rem02}, the processes $K$ and $\mu $ do not have jumps
in common, but $K$ jumps at predictable stopping times\index{predictable stopping times} and $\mu $ jumps
only at totally inaccessible stopping times,\index{totally inaccessible stopping times} then we can write
\begin{equation*}
\sum _{t<s\leq T}e^{\beta A_{s}}(\Delta \overline{\overline{Y}}_{s})^{2}=
\int _{t}^{T}\int _{E}e^{\beta A_{s}}|U_{s}(e)|^{2}\mu (ds,de)+\sum _{t<s
\leq T}e^{\beta A_{s}}(\Delta K_{s})^{2}.
\end{equation*}
Hence
\begin{eqnarray*}
&&\int _{t}^{T}e^{\beta A_{s}}\|U_{s}\|^{2}_{\lambda }ds-\sum _{t<s
\leq T}e^{\beta A_{s}}(\Delta \overline{\overline{Y}}_{s})^{2}
\\
&=&\int _{t}^{T}e^{\beta A_{s}}\|U_{s}\|^{2}_{\lambda }ds-\int _{t}^{T}
\int _{E}e^{\beta A_{s}}|U_{s}(e)|^{2}\mu (ds,de)-\sum _{t<s\leq T}e^{
\beta A_{s}}(\Delta K_{s})^{2}
\\
&\leq &-\int _{t}^{T}\int _{E}e^{\beta A_{s}}|U_{s}(e)|^{2}
\widetilde{\mu }(ds,de).
\end{eqnarray*}
Consequently,
\begin{eqnarray*}
&&\int _{t}^{T} e^{\beta A_{s}} a^{2}_{s}|\overline{\overline{Y}}_{s}|^{2}ds+
\int _{t}^{T}e^{\beta A_{s}}|Z_{s}|^{2}ds+\int _{t}^{T}e^{\beta A_{s}}
\|U_{s}\|^{2}_{\lambda }ds
\nonumber
\\
&\leq &e^{\beta A_{T}}|\xi _{T}|^{2}+ \frac{1}{\beta -1}\int _{t}^{T}e^{
\beta A_{s}}\left |\frac{f(s)}{a_{s}}\right |^{2}ds-2\int _{t}^{T}e^{
\beta A_{s}}\langle \overline{\overline{Y}}_{s-},Z_{s}dW_{s}
\rangle
\nonumber
\\
&&-2\int _{t}^{T}\int _{E} e^{\beta A_{s}}\langle
\overline{\overline{Y}}_{s-}, U_{s}(e)\widetilde{\mu }(ds,de)
\rangle +2\int _{t}^{T}e^{\beta A_{s}}\langle
\overline{\overline{Y}}_{s-},g(s)dB_{s}\rangle
\nonumber
\\
&&+\int _{t}^{T}e^{\beta A_{s}}|g(s)|^{2}ds+2
\operatorname*{ess
\,sup}_{\tau \in \mathcal{T}_{[0,T]}}e^{2\beta A_{\tau }}|\xi _{\tau }|^{2}+K_{T}^{2}+C_{T}^{2}.
\end{eqnarray*}
Here we have used also the Skorokhod\index{Skorokhod condition} and minimality conditions\index{minimality condition} on
$K$ and $C$. Next, by taking the expectation on both sides of above inequality,
we get
\begin{eqnarray*}
&&\|\overline{\overline{Y}}\|_{\mathscr{M}^{2,a}_{\beta }(\mathbf{R}^{k})}^{2}+
\|Z\|_{\mathscr{M}^{2}_{\beta }(\mathbf{R}^{k\times d})}^{2}+\|U\|_{
\mathscr{L}^{2}_{\beta }(\mathbf{R}^{k})}^{2}
\\
&\leq & 3\|\xi \|_{\mathscr{S}^{2}_{2\beta }(\mathbf{R}^{k})}^{2}+
\frac{1}{\beta -1}\left \|  \frac{f}{a}\right \|  _{\mathscr{M}^{2}_{\beta }(\mathbf{R}^{k})}^{2} +\left \|  g\right \|  _{\mathscr{M}^{2}_{\beta }(\mathbf{R}^{k\times \ell })}^{2}+{\mathbf{E}}|K_{T}|^{2}+{\mathbf{E}}|C_{T}|^{2}.
\end{eqnarray*}
Then
$(\overline{\overline{Y}},Z,U)\in \mathscr{M}^{2,a}_{\beta }(
\mathbf{R}^{k})\times \mathscr{M}^{2}_{\beta }(\mathbf{R}^{k\times d})
\times \mathscr{L}^{2}_{\beta }(\mathbf{R}^{k})$.

Finally, it is remarkabe that the uniqueness of the solution comes from the
uniqueness of the Mertens decomposition\index{Mertens decomposition} and the It\^{o}'s martingale representation
Theorem, and if $\overline{\overline{Y}}$ and $Y$ are two first-components
of the solution, then by Lemma \ref{ll} we have immediately
$\overline{\overline{Y}}=Y$.
\end{proof}

\begin{proposition}
\label{pro2}
Assume that the assumptions \emph{\textbf{(A1.1)}}--\emph{\textbf{(A1.4)}} are true. Then, if
$(y,z,\break u)\in \mathscr{B}^{2}_{\beta }(\mathbf{R}^{k})$ for $\beta>1$, there exists a unique
process
$(Y,Z,U,K,C)\in \mathscr{B}^{2}_{\beta }(\mathbf{R}^{k})\times
\mathscr{S}^{2}(\mathbf{R}^{k})\times \mathscr{S}^{2}(\mathbf{R}^{k})$
being a solution to the following RBDSDEJ, for all
$\tau \in \mathcal{T}_{[0,T]}$,
\begin{equation*}
\left \{
\begin{array}{l}
Y_{\tau }=\xi _{T} +\displaystyle \int _{\tau }^{T} f(s,y_{s},z_{s},u_{s})ds+
\int _{\tau }^{T} g(s,y_{s},z_{s},u_{s})dB_{s}-\int _{\tau }^{T}Z_{s}dW_{s}

\\
\hspace{1cm}
-\displaystyle \int _{\tau }^{T}\int _{E}U_{s}(e)\widetilde{\mu }(ds,de)+K_{T}-K_{
\tau }+C_{T-}-C_{\tau -},
\\
Y_{\tau }\geq \xi _{\tau },
\\
\displaystyle \int _{0}^{T}\mathbh{1}_{\{Y_{t}>\xi _{t}\}}dK^{c}_{t}=0,\quad
 (Y_{\tau -}-\xi _{\tau -})\Delta K^{d}_{\tau }=0\quad  \text{and}\quad (Y_{\tau }-
\xi _{\tau })\Delta C_{\tau }=0 \; a.s.
\end{array}
\right .
\end{equation*}
\end{proposition}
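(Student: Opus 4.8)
The plan is to reduce the statement to Proposition \ref{pro1} by \emph{freezing} the data. Given $(y,z,u)\in\mathscr{B}^{2}_{\beta}(\mathbf{R}^{k})$, set $\widehat{f}(s):=f(s,y_{s},z_{s},u_{s})$ and $\widehat{g}(s):=g(s,y_{s},z_{s},u_{s})$ for $s\le T$. Since $f$ and $g$ are jointly measurable and, by \textbf{(A1.3)}, $f(t,\cdot)$ and $g(t,\cdot)$ are $\mathscr{F}_{t}$-measurable while $(y,z,u)$ is $\mathscr{F}_{t}$-progressively measurable, the processes $\widehat{f}$ and $\widehat{g}$ are $\mathscr{F}_{t}$-measurable. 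It then suffices to check that $(\widehat{f},\widehat{g},\xi)$ satisfies the hypotheses of Proposition \ref{pro1}, namely \textbf{(A1.4)}, which holds by assumption, and \textbf{(A1.5)}, i.e. $\widehat{f}/a\in\mathscr{M}^{2}_{\beta}(\mathbf{R}^{k})$ and $\widehat{g}\in\mathscr{M}^{2}_{\beta}(\mathbf{R}^{k\times\ell})$; once this is done, Proposition \ref{pro1} yields, for every $\beta>1$, a unique $(Y,Z,U,K,C)\in\mathscr{B}^{2}_{\beta}(\mathbf{R}^{k})\times\mathscr{S}^{2}(\mathbf{R}^{k})\times\mathscr{S}^{2}(\mathbf{R}^{k})$ solving the stated RBDSDEJ.

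The core of the argument is the verification of \textbf{(A1.5)}. From the stochastic Lipschitz condition \textbf{(A1.1)},
\[
|\widehat{f}(s)|\le |f(s,0)|+\gamma_{s}|y_{s}|+\kappa_{s}|z_{s}|+\sigma_{s}\|u_{s}\|_{\lambda}.
\]
Dividing by $a_{s}$ and using \textbf{(A1.2)}, which gives $\gamma_{s}\le a_{s}^{2}$, $\kappa_{s}^{2}\le a_{s}^{2}$, $\sigma_{s}^{2}\le a_{s}^{2}$ and $\varrho_{s}\le a_{s}^{2}$ (so in particular $\gamma_{s}^{2}/a_{s}^{2}\le \gamma_{s}\le a_{s}^{2}$, $\kappa_{s}^{2}/a_{s}^{2}\le 1$, $\sigma_{s}^{2}/a_{s}^{2}\le 1$), one obtains
\[
e^{\beta A_{s}}\Bigl|\tfrac{\widehat{f}(s)}{a_{s}}\Bigr|^{2}\le 4\,e^{\beta A_{s}}\Bigl(\bigl|\tfrac{f(s,0)}{a_{s}}\bigr|^{2}+a_{s}^{2}|y_{s}|^{2}+|z_{s}|^{2}+\|u_{s}\|_{\lambda}^{2}\Bigr).
\]
Integrating over $[0,T]$ and taking expectations, the right-hand side is dominated by $4\bigl(\|f(\cdot,0)/a\|_{\mathscr{M}^{2}_{\beta}(\mathbf{R}^{k})}^{2}+\|y\|_{\mathscr{M}^{2,a}_{\beta}(\mathbf{R}^{k})}^{2}+\|z\|_{\mathscr{M}^{2}_{\beta}(\mathbf{R}^{k\times d})}^{2}+\|u\|_{\mathscr{L}^{2}_{\beta}(\mathbf{R}^{k})}^{2}\bigr)$, which is finite by \textbf{(A1.3)} and $(y,z,u)\in\mathscr{B}^{2}_{\beta}(\mathbf{R}^{k})$; hence $\widehat{f}/a\in\mathscr{M}^{2}_{\beta}(\mathbf{R}^{k})$. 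For $\widehat{g}$, I would write $|\widehat{g}(s)|^{2}\le 2|g(s,0)|^{2}+2|\widehat{g}(s)-g(s,0)|^{2}$ and apply the second inequality in \textbf{(A1.1)} together with $\varrho_{s}\le a_{s}^{2}$ to get $e^{\beta A_{s}}|\widehat{g}(s)|^{2}\le 2e^{\beta A_{s}}\bigl(|g(s,0)|^{2}+a_{s}^{2}|y_{s}|^{2}+\alpha(|z_{s}|^{2}+\|u_{s}\|_{\lambda}^{2})\bigr)$, so that integrating and taking expectations gives $\widehat{g}\in\mathscr{M}^{2}_{\beta}(\mathbf{R}^{k\times\ell})$.

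With \textbf{(A1.4)} and \textbf{(A1.5)} established for $(\widehat{f},\widehat{g},\xi)$, Proposition \ref{pro1} applies and gives existence of a solution in the required spaces, while uniqueness follows either from the uniqueness part of Proposition \ref{pro1} or, directly, from Lemma \ref{ll} with $f^{1}=f^{2}=\widehat{f}$, $g^{1}=g^{2}=\widehat{g}$, $\xi^{1}=\xi^{2}=\xi$, which forces $\overline{Y}=\overline{Z}=\overline{U}=0$ (and then $\overline{K}=\overline{C}=0$ from the equation). There is no genuine obstacle here: the only delicate point is the division by $a$ in \textbf{(A1.5)}, and this is handled precisely by the algebraic identity $a_{s}^{2}=\gamma_{s}+\kappa_{s}^{2}+\sigma_{s}^{2}+\varrho_{s}$ of \textbf{(A1.2)}, which guarantees that each stochastic Lipschitz coefficient is controlled by $a_{s}^{2}$ and thus makes the frozen coefficients compatible with the weighted space $\mathscr{M}^{2,a}_{\beta}$ in which the unknown $y$ lives.
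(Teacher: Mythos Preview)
Your proof is correct and follows essentially the same approach as the paper: freeze the coefficients by setting $\widehat{f}(s)=f(s,y_{s},z_{s},u_{s})$ and $\widehat{g}(s)=g(s,y_{s},z_{s},u_{s})$, verify \textbf{(A1.5)} using the stochastic Lipschitz condition \textbf{(A1.1)} together with the bound $\gamma_{s}+\kappa_{s}^{2}+\sigma_{s}^{2}+\varrho_{s}=a_{s}^{2}$ from \textbf{(A1.2)}, and then invoke Proposition~\ref{pro1}. The only cosmetic difference is that the paper bounds $|\widehat{f}(s)|^{2}$ directly before dividing by $a_{s}^{2}$, whereas you first divide and then square, but the resulting estimates and the conclusion are the same.
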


\begin{proof}
Given $(y,z,u)\in \mathscr{B}^{2}_{\beta }(\mathbf{R}^{k})$, we define
$\widehat{f}(t) = f(t, y_{t}, z_{t}, u_{t})$ and
$\widehat{g}(t) = g(t, y_{t}, z_{t}, u_{t})$. Let us show that
$\widehat{f}$ and $\widehat{g}$ satisfy \textbf{(A1.5)}. From the assumptions
\textbf{(A1.1)} and \textbf{(A1.2)}, we have
\begin{equation*}
|\widehat{f}(s)|^{2} \leq 4\left (a^{4}_{s}|y_{s}|^{2} +a^{2}_{s}|z_{s}|^{2}+a^{2}_{s}
\|u_{s}\|_{\lambda }^{2}+|f(s,0)|^{2}\right )
\end{equation*}
and
\begin{equation*}
|\widehat{g}(s)|^{2}\le 2\left (a^{2}_{s}|y_{s}|^{2}+\alpha (|z_{s}|^{2}+
\|u_{s}\|_{\lambda }^{2})+|g(s, 0)|^{2}\right ).
\end{equation*}
Thus gathering these inequalities, we deduce that
\begin{eqnarray*}
&& {\mathbf{E}}\bigg (\int _{0}^{T}e^{\beta A_{s}}\left |
\frac{\widehat{f}(s)}{a_{s}}\right |^{2}ds +\int _{0}^{T}e^{\beta A_{s}}|
\widehat{g}(s)|^{2}ds\bigg )
\\
&\leq & {\mathbf{E}}\left (6\int _{0}^{T}e^{\beta A_{s}}a^{2}_{s}|y_{s}|^{2}ds+(4+2
\alpha )\int _{0}^{T}e^{\beta A_{s}}(|z_{s}|^{2} + \|u_{s}\|_{\lambda }^{2})ds
\right )
\\
&&+{\mathbf{E}}\left (4\int _{0}^{T}e^{\beta A_{s}}\left |
\frac{f(s,0)}{a_{s}}\right |^{2}ds + 2\int _{0}^{T}e^{\beta A_{s}}|g(s,0)|^{2}ds
\right ).
\end{eqnarray*}
This implies that $\widehat{f}$ and $\widehat{g}$ satisfy \textbf{(A1.5)}
since $(y,z,u)\in \mathscr{B}^{2}_{\beta }(\mathbf{R}^{k})$ and in view
of the assumption \textbf{(A1.3)}. Hence the result follows from Proposition \ref{pro1}.
\end{proof}

We are now in position to study the solvability of our RBDSDEJ \eqref{backw} associated with parameters
$(f(.,\Theta ),g(.,\Theta ),\xi )$.
%
\begin{theorem}%
\label{existence}
Under the assumptions \emph{\textbf{(A1.1)}}--\emph{\textbf{(A1.4)}}, there exists $\beta_0>0$ such that for all $\beta\geq\beta_0$ the RBDSDEJ \eqref{backw} admits a unique solution
$(Y,Z,U,K,C)\in \mathscr{B}^{2}_{\beta }(\mathbf{R}^{k})\times
\mathscr{S}^{2}(\mathbf{R}^{k})\times \mathscr{S}^{2}(\mathbf{R}^{k})$.
\end{theorem}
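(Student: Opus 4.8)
The plan is to obtain the solution of \eqref{backw} as a fixed point of the map produced by Proposition \ref{pro2}. For $(y,z,u)\in\mathscr{B}^2_\beta(\mathbf{R}^k)$, Proposition \ref{pro2} gives a unique $(Y,Z,U,K,C)$ solving the RBDSDEJ driven by $\bigl(f(\cdot,y,z,u),\,g(\cdot,y,z,u),\,\xi\bigr)$; set $\Phi(y,z,u):=(Y,Z,U)$, a well-defined self-map of $\mathscr{B}^2_\beta(\mathbf{R}^k)$. A point $(y,z,u)$ is fixed by $\Phi$ exactly when $f(s,y_s,z_s,u_s)=f(s,\Theta_s)$ and $g(s,y_s,z_s,u_s)=g(s,\Theta_s)$, i.e. when the associated $(Y,Z,U,K,C)$ solves \eqref{backw}; the Skorokhod and minimality conditions of \eqref{backw} are then automatically in force, being exactly those already verified for the frozen problem, and the integrability $(K,C)\in\mathscr{S}^2(\mathbf{R}^k)\times\mathscr{S}^2(\mathbf{R}^k)$ is inherited from Proposition \ref{pro2}. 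So it suffices to show that, for $\beta$ large enough (in particular $\beta>1$, so that Proposition \ref{pro2} applies), $\Phi$ is a contraction for a norm equivalent to $\|\cdot\|_{\mathscr{B}^2_\beta(\mathbf{R}^k)}$, or equivalently that the Picard iterates $(Y^{n+1},Z^{n+1},U^{n+1})=\Phi(Y^n,Z^n,U^n)$ are Cauchy; uniqueness for \eqref{backw} will also follow, since any solution is a fixed point of $\Phi$ and then determines $(K,C)$ uniquely.

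The key ingredient is an a priori estimate refining Lemma \ref{ll}. Given two inputs $(y^i,z^i,u^i)$ with outputs $(Y^i,Z^i,U^i,K^i,C^i)$, put $\overline{\Re}=\Re^1-\Re^2$; here $\overline\xi=0$, $\overline f(s)=f(s,y^1_s,z^1_s,u^1_s)-f(s,y^2_s,z^2_s,u^2_s)$ and $\overline g(s)=g(s,y^1_s,z^1_s,u^1_s)-g(s,y^2_s,z^2_s,u^2_s)$. I would run the Gal'chouk--Lenglart computation of Lemma \ref{lem1} on $\overline Y$ exactly as in the proof of Lemma \ref{ll} --- the $\langle\overline Y_{s-},d\overline K_s\rangle$ and $\langle\overline Y_s,d\overline C_s\rangle$ terms are $\le0$ by the Skorokhod and minimality conditions, the $\widetilde\mu$-term is handled through the no-common-jumps property of Remark \ref{rem02}, and the stochastic integrals vanish in expectation by Proposition \ref{prop} --- but now I would bound the forcing term through \textbf{(A1.1)}--\textbf{(A1.2)}: from $2\langle\overline Y_s,\overline f(s)\rangle\le Ma^2_s|\overline Y_s|^2+M^{-1}a_s^{-2}|\overline f(s)|^2$ with a fixed $M>3/(1-\alpha)$, together with $\gamma_s^2/a_s^2\le\gamma_s\le a_s^2$, $\kappa_s^2\le a_s^2$, $\sigma_s^2\le a_s^2$, $\varrho_s\le a_s^2$, one obtains (after taking expectations and discarding the nonnegative left-hand terms)
\[
(\beta-M)\,\|\overline Y\|^2_{\mathscr{M}^{2,a}_\beta}+\|\overline Z\|^2_{\mathscr{M}^2_\beta}+\|\overline U\|^2_{\mathscr{L}^2_\beta}\ \le\ c_1\,\|\overline y\|^2_{\mathscr{M}^{2,a}_\beta}+\alpha'\bigl(\|\overline z\|^2_{\mathscr{M}^2_\beta}+\|\overline u\|^2_{\mathscr{L}^2_\beta}\bigr),
\]
with $c_1=1+3/M>1$ and $\alpha'=\alpha+3/M<1$; a Burkholder--Davis--Gundy argument as for \eqref{e7} then bounds $\|\overline Y\|^2_{\mathscr{S}^2_\beta}$ by a fixed multiple of $\|\overline y\|^2_{\mathscr{M}^{2,a}_\beta}+\|\overline z\|^2_{\mathscr{M}^2_\beta}+\|\overline u\|^2_{\mathscr{L}^2_\beta}+\|\overline Z\|^2_{\mathscr{M}^2_\beta}+\|\overline U\|^2_{\mathscr{L}^2_\beta}$.

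From this the theorem follows. The obstacle is that $\Phi$ is \emph{not} a contraction for the plain $\mathscr{B}^2_\beta$-norm: the $(Z,U)$-output contracts the $(z,u)$-input with factor $\alpha'<1$ but picks up the $y$-input with factor $c_1>1$, while only the $Y$-output is additionally damped by $1/(\beta-M)$. This is resolved by re-norming $\mathscr{B}^2_\beta(\mathbf{R}^k)$ as $\eta\|Y\|^2_{\mathscr{S}^2_\beta}+\lambda\|Y\|^2_{\mathscr{M}^{2,a}_\beta}+\|Z\|^2_{\mathscr{M}^2_\beta}+\|U\|^2_{\mathscr{L}^2_\beta}$ and choosing the parameters in the order: first $M>3/(1-\alpha)$ so that $\alpha'<1$; then $\eta$ small; then $\lambda$ large; finally $\beta\ge\beta_0$ large enough --- with these choices the $\|\overline y\|^2_{\mathscr{M}^{2,a}_\beta}$-contribution to the output norm is at most $\bigl(\lambda c_1/(\beta-M)+c_1+\eta(\cdots)\bigr)\|\overline y\|^2\le\rho\lambda\|\overline y\|^2$ and the $(\overline z,\overline u)$-contribution at most $\bigl(\alpha'(1+\lambda/(\beta-M))+\eta(\cdots)\bigr)(\|\overline z\|^2+\|\overline u\|^2)\le\rho(\|\overline z\|^2+\|\overline u\|^2)$ with $\rho<1$, so $\Phi$ is a contraction for the equivalent norm and the Banach fixed point theorem yields the unique fixed point; equivalently, one checks directly that $S_n:=c_1\|\Delta Y^n\|^2_{\mathscr{M}^{2,a}_\beta}+\alpha'(\|\Delta Z^n\|^2_{\mathscr{M}^2_\beta}+\|\Delta U^n\|^2_{\mathscr{L}^2_\beta})$ obeys $S_{n+1}\le\bigl(c_1/(\beta-M)+\alpha'\bigr)S_n$, geometric for $\beta$ large, so the Picard iterates are Cauchy in $\mathscr{B}^2_\beta(\mathbf{R}^k)$. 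Either way, the fixed point $(Y,Z,U)$ together with the $(K,C)$ from Proposition \ref{pro2} is the unique solution of \eqref{backw}; uniqueness also follows from the same refined estimate applied to two solutions of \eqref{backw}, where now $\overline f,\overline g$ involve the outputs, so every $|\overline Y_s|^2$, $|\overline Z_s|^2$, $\|\overline U_s\|_\lambda^2$-term is absorbed by the $\beta$-damped left side and $\overline Y=\overline Z=\overline U=0$. The main difficulty is precisely this constant bookkeeping --- isolating which blocks of the norm are damped by $\beta$ and which are controlled only by $\alpha<1$, and ordering the choice of $M,\eta,\lambda,\beta$ so the contraction constant stays below $1$.
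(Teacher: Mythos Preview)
Your proposal is correct and follows essentially the same route as the paper: set up the map $\Phi$ from Proposition~\ref{pro2}, apply the Gal'chouk--Lenglart identity (Lemma~\ref{lem1}) to the difference of two outputs, drop the reflection terms via the Skorokhod and minimality conditions, control the jump sums through Remark~\ref{rem02}, bound the drivers through \textbf{(A1.1)}--\textbf{(A1.2)}, and obtain a contraction in a weighted $\mathcal{A}^2_\beta$-norm for $\beta$ large --- your $S_n$-recursion with factor $c_1/(\beta-M)+\alpha'$ is exactly the paper's weighted iteration (there the weight is $\overline{c}=2/(\varepsilon+\alpha)$ and the contraction factor is $\varepsilon+\alpha$). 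The only cosmetic difference is that the paper bounds $2\langle\overline Y,\overline f\rangle$ term-by-term with asymmetric Young inequalities rather than via $|\overline f|^2/a^2$, and it handles the $\mathscr{S}^2_\beta$-convergence \emph{after} the $\mathcal{A}^2_\beta$-Cauchy step (as in your BDG remark) rather than folding it into the contraction norm with a small weight $\eta$; either way works and the passage to the limit and uniqueness are as you describe.
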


\begin{proof}
\textbf{(i) Existence.} Our strategy in the proof of existence is to use the
Picard approximate sequence. To this end, we consider the sequence
$(\Theta ^{n})_{n\geq 0}:= (Y^{n},Z^{n},\break U^{n})_{n\geq 0}\in
\mathscr{B}^{2}_{\beta }(\mathbf{R}^{k})$ defined recursively by
$Y^{0} = Z^{0} = U^{0} = 0$ and for any $n \geq 1$,
$\tau \in \mathcal{T}_{[0,T]}$,
%
\begin{equation}
\label{eq9}
\left \{
\begin{array}{l}
Y^{n+1}_{\tau } = \xi _{T}+\displaystyle \int _{\tau }^{T} f\left (s,
\Theta ^{n}_{s}\right )ds+\int _{\tau }^{T} g\left (s,\Theta ^{n}_{s}
\right )dB_{s} -\int _{\tau }^{T}Z^{n+1}_{s} dW_{s}
\\
\hspace{1cm}
-\displaystyle \int _{\tau }^{T}\int _{E} U^{n+1}_{s}(e)
\widetilde{\mu }(ds,de)+K^{n+1}_{T}-K^{n+1}_{\tau }+C^{n+1}_{T-}-C^{n+1}_{
\tau -},
\\
Y^{n+1}_{\tau }\geq \xi _{\tau }\text{ a.s., }
\\
\displaystyle \int _{0}^{T}\mathbh{1}_{\{Y^{n+1}_{t}>\xi _{t}\}}dK^{c,n+1}_{t}=0
\text{ a.s.}\quad \text{and}\quad  (Y^{n+1}_{\tau -}-\xi _{\tau -})\Delta K^{d,n+1}_{\tau }=0 \text{ a.s.,}
\\
(Y^{n+1}_{\tau }-\xi _{\tau })\Delta C^{n+1}_{\tau }=0 \text{ a.s.}
\end{array}
\right .
\end{equation}
Since for
$n \geq 0,\,\, (Y^{n},Z^{n},U^{n})\in \mathscr{B}^{2}_{\beta }(
\mathbf{R}^{k})$, by virtue of Proposition \ref{pro2}, \break  RBDSDEJ \eqref{eq9} has a unique solution
$(Y^{n+1},Z^{n+1},U^{n+1},K^{n+1},C^{n+1})\in \mathscr{B}^{2}_{\beta }(
\mathbf{R}^{k})\times\break  \mathscr{S}^{2}(\mathbf{R}^{k})\times
\mathscr{S}^{2}(\mathbf{R}^{k})$.

In the sequel, we shall show that $(Y^{n}, Z^{n}, U^{n})_{n\geq 0}$ is
a Cauchy sequence in the Banach space
$\mathscr{B}^{2}_{\beta }(\mathbf{R}^{k})$. We define
$\overline{\Re }^{n+1} = \Re ^{n+1}- \Re ^{n}$ for
$\Re \in \{Y, Z, U,K,C\}$, and
\begin{equation*}
\forall h \in \left \{  f,g\right \}  , \quad \overline{h}^{n}_{\Theta }(t)
= h(t,\Theta ^{n}_{t}) - h(t,\Theta ^{n-1}_{t}), \quad t\le T.
\end{equation*}
We derive that for any $n\geq 1$ the process
$(\overline{Y}^{n+1}, \overline{Z}^{n+1}, \overline{U}^{n+1},
\overline{K}^{n+1}, \overline{C}^{n+1} )$ satisfies the following equation
\begin{eqnarray*}
\overline{Y}^{n+1}_{t} &=& \int _{t}^{T}\overline{f}_{\Theta }^{n}(s)ds
+ \int _{t}^{T}\overline{g}_{\Theta }^{n}(s)dB_{s} - \int _{t}^{T}
\overline{Z}^{n+1}_{s}dW_{s}
\\
&&-\int _{t}^{T}\int _{E}\overline{U}^{n+1}_{s}(e)\widetilde{\mu }(ds,de)+
\overline{K}^{n+1}_{T}-\overline{K}^{n+1}_{t}+\overline{C}^{n+1}_{T-}-
\overline{C}^{n+1}_{t-}.
\end{eqnarray*}
Applying the Lemma \ref{lem1}, we have
%
\begin{eqnarray}
\label{eq10}
&&e^{\beta A_{t}}|\overline{Y}^{n+1}_{t}|^{2}+\beta \int _{t}^{T} e^{
\beta A_{s}} a^{2}_{s}|\overline{Y}^{n+1}_{s}|^{2}ds+\int _{t}^{T}e^{
\beta A_{s}}|\overline{Z}^{n+1}_{s}|^{2}ds
\nonumber
\\
&=&2\int _{t}^{T}e^{\beta A_{s}}\langle \overline{Y}^{n+1}_{s-},
\overline{f}^{n}_{\Theta }(s)\rangle ds+2\int _{t}^{T}e^{\beta A_{s}}
\langle \overline{Y}^{n+1}_{s-},d\overline{K}^{n+1}_{s}\rangle
\nonumber
\\
&&-2\int _{t}^{T}e^{\beta A_{s}}\langle \overline{Y}^{n+1}_{s-},
\overline{Z}^{n+1}_{s}dW_{s}\rangle -2\int _{t}^{T}\int _{E} e^{
\beta A_{s}}\langle \overline{Y}^{n+1}_{s-}, \overline{U}^{n+1}_{s}(e)
\widetilde{\mu }(ds,de)\rangle
\nonumber
\\
&&%
\hspace{-1cm}%
+2\int _{t}^{T}e^{\beta A_{s}}\langle \overline{Y}^{n+1}_{s-},
\overline{g}^{n}_{\Theta }(s)dB_{s}\rangle +\int _{t}^{T}e^{\beta A_{s}}|
\overline{g}^{n}_{\Theta }(s)|^{2}ds+2\int _{t}^{T}e^{\beta A_{s}}
\langle \overline{Y}^{n+1}_{s},d\overline{C}^{n+1}_{s}\rangle
\nonumber
\\
&&-\sum _{t<s\leq T}e^{\beta A_{s}}(\Delta \overline{Y}^{n+1}_{s})^{2}-
\sum _{t\leq s<T}e^{\beta A_{s}}(\Delta _{+} \overline{Y}^{n+1}_{s})^{2}.
\end{eqnarray}
From Remark \ref{rem02}, the processes $\overline{K}^{n+1}$ and
$\mu $ do not have jumps in common, but $\overline{K}^{n+1}$ jumps at
predictable stopping times\index{predictable stopping times} and $\mu $ jumps only at totally inaccessible
stopping times,\index{totally inaccessible stopping times} then
\begin{equation*}
\int _{t}^{T}e^{\beta A_{s}}\|\overline{U}^{n+1}_{s}\|^{2}_{\lambda }ds-
\sum _{t<s\leq T}e^{\beta A_{s}}(\Delta \overline{Y}^{n+1}_{s})^{2}
\leq -\int _{t}^{T}\int _{E}e^{\beta A_{s}}|\overline{U}^{n+1}_{s}(e)|^{2}
\widetilde{\mu }(ds,de).
\end{equation*}
On the other hand, by using the Skorokhod\index{Skorokhod condition} and minimality conditions\index{minimality condition} on
$\overline{K}^{n+1}$ and $\overline{C}^{n+1}$ we can show that
$\langle \overline{Y}^{n+1}_{s-},d\overline{K}^{n+1}_{s}\rangle
\leq 0$ and
$\langle \overline{Y}^{n+1}_{s},d\overline{C}^{n+1}_{s}\rangle
\leq 0$. Moreover, from the assumptions \textbf{(A1.1)}--\textbf{(A1.2)}, we deduce
that for any $\varepsilon >0$,
\begin{eqnarray*}
2 \langle \overline{Y}_{s}^{n+1}, \overline{f}_{\Theta }^{n}(s)
\rangle &\leq & 2|\overline{Y}_{s}^{n+1}|\left (\gamma _{s}|
\overline{Y}_{s}^{n}| + \kappa _{s}|\overline{Z}_{s}^{n}| + \sigma _{s}
\|\overline{U}_{s}^{n}\|_{\lambda }\right )
\\
&&%
\hspace{-1.5cm}%
\;\;\leq \;\; \left (\gamma _{s} + \frac{1}{\varepsilon }[\kappa ^{2}_{s}
+ \sigma ^{2}_{s}]\right )|\overline{Y}_{s}^{n+1}|^{2} + \gamma _{s}|
\overline{Y}_{s}^{n}|^{2} + \varepsilon \left (|\overline{Z}_{s}^{n}|^{2}
+ \|\overline{U}_{s}^{n}\|_{\lambda }^{2}\right )
\\
&&%
\hspace{-1.5cm}%
\;\;\leq \;\; \left (1 + \frac{1}{\varepsilon }\right )a^{2}_{s}|
\overline{Y}_{s}^{n+1}|^{2} + a^{2}_{s}|\overline{Y}_{s}^{n}|^{2} +
\varepsilon \left (|\overline{Z}_{s}^{n}|^{2} + \|\overline{U}_{s}^{n}
\|_{\lambda }^{2}\right )
\end{eqnarray*}
and
\begin{equation*}
|\overline{g}_{\Theta }^{n}(s)|^{2}\leq a^{2}_{s}|\overline{Y}_{s}^{n}|^{2}
+ \alpha \left (|\overline{Z}_{s}^{n}|^{2} + \|\overline{U}_{s}^{n}
\|_{\lambda }^{2}\right ).
\end{equation*}
Plugging these inequalities in \eqref{eq10}, and taking the expectation
in both side, we deduce that, for any $\beta >0$ and $\varepsilon >0$,
\begin{eqnarray*}
&&\left (\beta - 1 - \frac{1}{\varepsilon }\right ){\mathbf{E}}\int _{t}^{T}e^{
\beta A_{s}}a^{2}_{s}|\overline{Y}_{s}^{n+1}|^{2}ds + {\mathbf{E}}\int _{t}^{T}e^{
\beta A_{s}}|\overline{Z}_{s}^{n+1}|^{2}ds
\\
&&+ {\mathbf{E}}\int _{t}^{T}e^{\beta A_{s}}\|\overline{U}^{n+1}_{s}\|^{2}_{\lambda }ds
\\
&\leq & 2{\mathbf{E}}\int _{t}^{T}e^{\beta A_{s}}a^{2}_{s}|\overline{Y}_{s}^{n}|^{2}ds
\\
&& + (\varepsilon + \alpha )\left ({\mathbf{E}}\int _{t}^{T}e^{\beta A_{s}}|
\overline{Z}_{s}^{n}|^{2}ds +{\mathbf{E}}\int _{t}^{T}e^{\beta A_{s}}\|
\overline{U}^{n}_{s}\|^{2}_{\lambda }ds\right ).
\end{eqnarray*}
Fix $\varepsilon >0$ and define
$\overline{c} = 2/(\varepsilon + \alpha )$ and
$\beta _{0} = 1 + \overline{c}+ 1/\varepsilon $. Choosing
$\beta \geq \beta _{0}$, we obtain
\begin{eqnarray*}
&&{\mathbf{E}}\bigg [\overline{c}\int _{t}^{T}e^{\beta A_{s}}a^{2}_{s}|
\overline{Y}_{s}^{n+1}|^{2}ds+ \int _{t}^{T}e^{\beta A_{s}}|
\overline{Z}_{s}^{n+1}|^{2}ds + \int _{t}^{T}e^{\beta A_{s}}\|
\overline{U}^{n+1}_{s}\|^{2}_{\lambda }ds\bigg ]
\\
&\leq & (\varepsilon + \alpha ){\mathbf{E}}\bigg [\overline{c}\int _{t}^{T}e^{
\beta A_{s}}a^{2}_{s}|\overline{Y}_{s}^{n}|^{2}ds + \int _{t}^{T}e^{
\beta A_{s}}|\overline{Z}_{s}^{n}|^{2}ds+ \int _{t}^{T}e^{\beta A_{s}}
\|\overline{U}^{n}_{s}\|^{2}_{\lambda }ds\bigg ]
\end{eqnarray*}
and by iterations we deduce that
\begin{eqnarray*}
&&\overline{c} \left \|  \overline{Y}^{n+1}\right \|  ^{2}_{\mathscr{M}_{\beta }^{2,a}(\mathbf{R}^{k})} + \left \|  \overline{Z}^{n+1}\right
\|  ^{2}_{\mathscr{M}^{2}_{\beta }(\mathbf{R}^{k})} + \left \|
\overline{U}^{n+1}\right \|  ^{2}_{\mathscr{L}^{2}_{\beta }(\mathbf{R}^{k})}
\\
&\le & (\varepsilon + \alpha )^{n} \left (\overline{c} \left \|
\overline{Y}^{1}\right \|  ^{2}_{\mathscr{M}^{2,a}_{\beta }(\mathbf{R}^{k})}
+ \left \|  \overline{Z}^{1}\right \|  ^{2}_{\mathscr{M}^{2}_{\beta }(
\mathbf{R}^{k})} + \left \|  \overline{U}^{1}\right \|  ^{2}_{
\mathscr{L}^{2}_{\beta }(\mathbf{R}^{k})}\right ).
\end{eqnarray*}
Hence, choosing $\varepsilon > 0$ such that
$\varepsilon + \alpha < 1$, we deduce that
$(Y^{n}, Z^{n}, U^{n})_{n \geq 1}$ is a Cauchy sequence in the Banach space
$\mathcal{A}^{2}_{\beta }(\mathbf{R}^{k})$. It remains to show that
$(Y^{n})_{n \geq 1}$ is a Cauchy sequence in
${\mathscr{S}}^{2}_{\beta }(\mathbf{R}^{k})$. To this end, we define for
any integers $n, m \geq 1$ $\Re ^{n,m} = \Re ^{n}-\Re ^{m}$ for
$\Re \in \{Y, Z, U, K,C\}$, and
\begin{equation*}
\forall h \in \left \{  f,\,g\right \}  ,\,\quad h^{n,m}_{\Theta }(t) = h(t,
\Theta ^{n}_{t}) - h(t,\Theta ^{m}_{t}), \quad t \leq T.
\end{equation*}
Then it is readily seen that
%
\begin{eqnarray}
\label{eqq}
Y_{t}^{n+1, m+1} &=& \int _{t}^{T}f_{\Theta }^{n, m}(s)ds + \int _{t}^{T}g_{\Theta }^{n, m}(s)dB_{s}-\int _{t}^{T}Z_{s}^{n+1, m+1}dW_{s}
\nonumber
\\
&&-\int _{t}^{T}\int _{E}U_{s}^{n+1, m+1}(e)\widetilde{\mu }(ds,de)+
K^{n+1,m+1}_{T}-K^{n+1,m+1}_{t}
\nonumber
\\
&&+C^{n+1,m+1}_{T-}-C^{n+1,m+1}_{t-}.
\end{eqnarray}
Applying Lemma \ref{lem1} to \eqref{eqq}, and taking the essential supremum
over $\tau \in \mathcal{T}_{[0,T]}$ and then the expectation on both sides
we get
\begin{eqnarray*}
&&{\mathbf{E}}\left (\operatorname*{ess
\,sup}_{\nu \in \mathcal{T}_{[0,T]}}e^{\beta A_{\tau }}|Y^{n+1,m+1}_{\tau }|^{2}\right )+\beta {\mathbf{E}}\int _{t}^{T}e^{\beta A_{s}}a^{2}_{s}|Y_{s}^{n+1,m+1}|^{2}ds
\\
&&+{\mathbf{E}}\int _{t}^{T}e^{\beta A_{s}}|Z_{s}^{n+1,m+1}|^{2}ds+{\mathbf{E}}
\int _{t}^{T}e^{\beta A_{s}}\|U_{s}^{n+1,m+1}\|^{2}_{\lambda }ds
\\
&\leq & 2{\mathbf{E}}\int _{t}^{T}e^{\beta A_{s}}\langle Y_{s-}^{n+1,m+1},f_{\Theta }^{n,m}(s)\rangle ds + {\mathbf{E}}\int _{t}^{T}e^{\beta A_{s}}| g_{\Theta }^{n,m}(s)|^{2}ds
\\
&&+2{\mathbf{E}}\operatorname*{ess
\,sup}_{\nu \in \mathcal{T}_{[0,T]}}\left |\int _{0}^{\tau }e^{\beta A_{s}}
\langle Y_{s-}^{n+1,m+1},Z_{s}^{n+1,m+1}dW_{s}\rangle \right |
\\
&&+2{\mathbf{E}}\operatorname*{ess
\,sup}_{\nu \in \mathcal{T}_{[0,T]}}\left |\int _{0}^{\tau }e^{\beta A_{s}}
\langle Y_{s-}^{n+1,m+1},g_{\Theta }^{n,m}(s)dB_{s}\rangle \right |
\\
&&+2{\mathbf{E}}\operatorname*{ess
\,sup}_{\nu \in \mathcal{T}_{[0,T]}}\left |\int _{0}^{\tau }\int _{E} e^{
\beta A_{s}}\langle Y_{s-}^{n+1,m+1}, U_{s}^{n+1,m+1}(e)
\widetilde{\mu }(ds ,de)\rangle \right |.
\end{eqnarray*}
But, for any $\varepsilon >0$,
\begin{eqnarray*}
2 \langle Y_{s}^{n+1,m+1},f_{\Theta }^{n,m}(s)\rangle \leq
\frac{1}{\varepsilon }a^{2}_{s}|Y_{s}^{n+1,m+1}|^{2}+\varepsilon
\left |\frac{f_{\Theta }^{n,m}(s)}{a_{s}}\right |^{2}.
\end{eqnarray*}
Moreover, by the Burkh\"{o}lder--Davis--Gundy inequality, there exists a universal
constant $c$ such that
\begin{eqnarray*}
&&%
\hspace{-1cm}%
2{\mathbf{E}}\operatorname*{ess
\,sup}_{\tau \in \mathcal{T}_{[0,T]}}\left |\int _{0}^{\tau }e^{\beta A_{s}}
\langle Y_{s-}^{n+1,m+1},Z_{s}^{n+1,m+1}dW_{s}\rangle \right |
\\
&&%
\hspace{2cm}%
\;\;\leq \;\;\frac{1}{4}\|Y^{n+1,m+1}\|_{\mathscr{S}^{2}_{\beta }(
\mathbf{R}^{k})}^{2}+4c^{2}\|Z^{n+1,m+1}\|_{\mathscr{M}^{2}_{\beta }(
\mathbf{R}^{k\times d})}^{2},
\end{eqnarray*}
\begin{eqnarray*}
&&%
\hspace{-1cm}%
2{\mathbf{E}}\operatorname*{ess
\,sup}_{\tau \in \mathcal{T}_{[0,T]}}\left |\int _{0}^{\tau }\int _{E} e^{
\beta A_{s}}\langle Y_{s-}^{n+1,m+1}, U_{s}^{n+1,m+1}(e)
\widetilde{\mu }(ds ,de)\rangle \right |
\\
&&%
\hspace{2cm}%
\;\;\leq \;\; \frac{1}{4}\|Y^{n+1,m+1}\|_{\mathscr{S}^{2}_{\beta }(
\mathbf{R}^{k})}^{2}+4c^{2}\|U^{n+1,m+1}\|_{\mathscr{L}^{2}_{\beta }(
\mathbf{R}^{k})}^{2}
\end{eqnarray*}
and
\begin{eqnarray*}
&&%
\hspace{-1cm}%
2{\mathbf{E}}\operatorname*{ess
\,sup}_{\tau \in \mathcal{T}_{[0,T]}}\left |\int _{0}^{\tau }e^{\beta A_{s}}
\langle Y_{s-}^{n+1,m+1},g_{\Theta }^{n,m}(s)dB_{s}\rangle \right |
\\
&&%
\hspace{2cm}%
\;\;\leq \;\;\frac{1}{4}\|Y^{n+1,m+1}\|_{\mathscr{S}^{2}_{\beta }(
\mathbf{R}^{k})}^{2}+4c^{2}\|g^{n,m}_{\Theta }\|_{\mathscr{M}^{2}_{\beta }(\mathbf{R}^{k\times \ell })}^{2}.
\end{eqnarray*}
Hence, there exists $\mathcal{C}>0$ such that
\begin{eqnarray*}
&&{\mathbf{E}}\left (\operatorname*{ess
\,sup}_{\nu \in \mathcal{T}_{[0,T]}}e^{\beta A_{\tau }}|Y^{n+1,m+1}_{\tau }|^{2}\right )
\\
&\leq & \mathcal{C}\left ({\mathbf{E}}\int _{0}^{T}e^{\beta A_{s}}\left |
\frac{f_{\Theta }^{n,m}(s)}{a_{s}}\right |^{2} ds + {\mathbf{E}}\int _{0}^{T}e^{
\beta A_{s}}| g_{\Theta }^{n,m}(s)|^{2}ds\right )
\\
&\leq &\mathcal{C}\left (4{\mathbf{E}}\int _{0}^{T}e^{\beta A_{s}}a^{2}_{s}|Y_{s}^{n,m}|^{2}ds
\right .
\\
&&\left . + (3 + \alpha )\left ({\mathbf{E}}\int _{0}^{T}e^{\beta A_{s}}|Z_{s}^{n,m}|^{2}ds
+{\mathbf{E}}\int _{0}^{T}e^{\beta A_{s}}\|U^{n,m}_{s}\|^{2}_{\lambda }ds
\right )\right ).
\end{eqnarray*}
Since $(Y^{n},Z^{n},U^{n})_{n \geq 1}$ is a Cauchy sequence in
$\mathcal{A}^{2}_{\beta }(\mathbf{R}^{k})$, we deduce that
$(Y^{n})_{n \geq 1}$ is a Cauchy sequence in
${\mathscr{S}}^{2}_{\beta }(\mathbf{R}^{k})$. Hence,
$(Y^{n}, Z^{n}, U^{n})_{n \geq 1}$ is a Cauchy sequence in the Banach space
$\mathscr{B}^{2}_{\beta }(\mathbf{R}^{k})$, so it converges in
$\mathscr{B}^{2}_{\beta }(\mathbf{R}^{k})$ to a limit
$\Theta =(Y,Z,U)$. Now let us show that $(Y, Z, U)$, with the additional
Mertens process $(K,C)$,\index{Mertens process} is a solution to RBDSDEJ \eqref{backw}.\vadjust{\goodbreak}%

Since $(Y^{n}, Z^{n}, U^{n})_{n \geq 1}$ converges in
$\mathscr{B}^{2}_{\beta }(\mathbf{R}^{k})$ to a limit $(Y, Z, U)$, we
have
%
\begin{align}
\label{eq11}
\displaystyle \lim _{n\rightarrow + \infty } \left \|  (Y^{n}-Y, Z^{n}-Z,
U^{n}-U)\right \|^2  _{\mathscr{B}^{2}_{\beta }(\mathbf{R}^{k})}=0.
\end{align}
Using the Cauchy--Schwarz inequality and \eqref{eq11}, we deduce from \textbf{(A1.1)}
and \textbf{(A1.2)}
\begin{align*}
{\mathbf{E}}\bigg (\bigg |\int _{t}^{T}(f(s,&\Theta _{s}^{n}) - f(s,\Theta _{s}))ds
\bigg |^{2}\bigg )
\\
&\leq \;\; {\mathbf{E}}\bigg (\frac{1}{\beta }\int _{t}^{T}e^{\beta A_{s}}
\frac{|f(s,Y_{s}^{n}, Z_{s}^{n}, U_{s}^{n}) - f(s,Y_{s}, Z_{s}, U_{s})|^{2}}{a^{2}_{s}}ds
\bigg )
\\
&\leq \;\; \frac{3}{\beta }{\mathbf{E}}\bigg ( \int _{t}^{T}e^{\beta A_{s}}a^{2}_{s}|Y_{s}^{n}-Y_{s}|^{2}ds
+ \int _{t}^{T}e^{\beta A_{s}}|Z_{s}^{n}-Z_{s}|^{2}ds
\\
&\;\;+\int _{t}^{T}e^{\beta A_{s}}\|U_{s}^{n}-U_{s}\|^{2}_{\lambda }ds
\bigg )\xrightarrow[n\to +\infty ]{}0.
\end{align*}
Similarly, by the Burkh\"{o}lder--Davis--Gundy inequality and \eqref{eq11}, we have
\begin{align*}
{\mathbf{E}}\bigg (\sup _{0\leq t \leq T}\bigg |\int _{t}^{T}&g(s,\Theta _{s}^{n})dB_{s}
- \int _{t}^{T}g(s,\Theta _{s})dB_{s}\bigg |^{2}\bigg )
\\
&\leq \;\; {\mathbf{E}}\left (\int _{t}^{T}|g(s,Y_{s}^{n}, Z_{s}^{n}, U_{s}^{n})
- g(s,Y_{s}, Z_{s}, U_{s})|^{2}ds\right )
\\
&\leq \;\; {\mathbf{E}}\bigg (\int _{t}^{T}e^{\beta A_{s}}a^{2}_{s}|Y_{s}^{n}-Y_{s}|^{2}ds
+ \alpha \int _{t}^{T}e^{\beta A_{s}}|Z_{s}^{n}-Z_{s}|^{2}ds
\\
&\;\; +\alpha \int _{t}^{T}e^{\beta A_{s}}\|U_{s}^{n}-U_{s}\|^{2}_{\lambda }ds\bigg )\xrightarrow[n\to +\infty ]{}0.
\end{align*}
Moreover, since $A_{s}\geq 0$ for all $s\le T$, we have
\begin{equation*}
{\mathbf{E}}\left (\sup _{0\leq t \leq T}\left |\int _{t}^{T} Z_{s}^{n}dW_{s}
- \int _{t}^{T} Z_{s}dW_{s}\right |^{2}\right )\leq {\mathbf{E}}\int _{t}^{T}e^{
\beta A_{s}}|Z_{s}^{n}-Z_{s}|^{2}ds\xrightarrow[n\to +\infty ]{}0
\end{equation*}
and
\begin{align*}
{\mathbf{E}}\bigg (\sup _{0\leq t \leq T}\bigg |\int _{t}^{T}\int _{E} U_{s}^{n}(e)
\widetilde{\mu }(de,ds) &- \int _{t}^{T}\int _{E} U_{s}(e)
\widetilde{\mu }(de,ds)\bigg |^{2}\bigg )
\\
&\leq \;\; {\mathbf{E}}\int _{t}^{T}e^{\beta A_{s}}\|U_{s}^{n}-U_{s}\|^{2}_{\lambda }ds \xrightarrow[n\to +\infty ]{}0.
\end{align*}
For each $\tau \in \mathcal{T}_{[0,T]}$, let
\begin{eqnarray*}
\widetilde{K}_{\tau }=K_{\tau }-C_{\tau -}&=&Y_{0}-Y_{\tau }-\displaystyle
\int _{0}^{\tau }f\left (s,\Theta _{s}\right )ds-\int _{0}^{\tau }g\left (s,
\Theta _{s}\right )dB_{s}
\\
&&+\int _{0}^{\tau }Z_{s} dW_{s}+\displaystyle \int _{0}^{\tau }\int _{E} U_{s}(e)
\widetilde{\mu }(ds,de).
\end{eqnarray*}
Then, we can easily show that
$\|\widetilde{K}^{n}-\widetilde{K}\|^{2}_{\mathscr{S}^{2}}
\longrightarrow 0,\,\,\text{as}\,\,n\longrightarrow +\infty $. So, letting
${n \longrightarrow + \infty }$ in \eqref{eq9}, we deduce that
$(Y,Z,U,K,C)$ is a solution to RBDSDEJ \eqref{backw}.

\textbf{(ii) Uniqueness.} Let $(Y^{1},Z^{1},U^{1},K^{1},C^{1})$ and
$(Y^{2},Z^{2},U^{2},K^{2},C^{2})$ be two solutions to RBDSDEJ \eqref{backw}. We define $\overline{\Re }= \Re ^{1}-\Re ^{2}$ for
$\Re \in \{Y,Z,U,K,C\}$ and
\begin{equation*}
\forall h \in \left \{  f,g\right \}  , \quad \overline{h}_{\Theta }(t) = h(t,
\Theta ^{1}_{t}) - h(t, \Theta ^{2}_{t}), \quad t \leq T.
\end{equation*}
Thus the process
$(\overline{Y}, \overline{Z}, \overline{U}, \overline{K},
\overline{C})$ satisfies the following equation
%
\begin{eqnarray}
\label{eq12}
\overline{Y}_{t}&=& \int _{t}^{T} \overline{f}_{\Theta }(s)ds + \int _{t}^{T}
\overline{g}_{\Theta }(s)dB_{s} - \int _{t}^{T}\overline{Z}_{s}dW_{s} +
\int _{t}^{T}\int _{E}\overline{U}_{s}(e)\widetilde{\mu }(ds,de)
\nonumber
\\
&&+\overline{K}_{T}-\overline{K}_{t}+\overline{C}_{T-}-\overline{C}_{t-}.
\end{eqnarray}
Applying Lemma \ref{lem1} to \eqref{eq12} and taking into consideration
Remark \ref{rem02}, we have
\begin{align*}
{\mathbf{E}}\left [e^{\beta A_{t}}|\overline{Y}_{t}|^{2}\right ] &+
\beta {\mathbf{E}}\int _{t}^{T}e^{\beta A_{s}}a^{2}_{s}|\overline{Y}_{s}|^{2}ds
+ {\mathbf{E}}\int _{t}^{T}e^{\beta A_{s}}(|\overline{Z}_{s}|^{2}+\|
\overline{U}_{s}\|^{2}_{\lambda }) ds
\nonumber
\\
& \leq \;\; 2 {\mathbf{E}}\int _{t}^{T}e^{\beta A_{s}}\langle
\overline{Y}_{s},\overline{f}_{\Theta }(s)\rangle ds + {\mathbf{E}}\int _{t}^{T}e^{
\beta A_{s}}| \overline{g}_{\Theta }(s)|^{2}ds.
\end{align*}
By the same computations as before (by using the assumptions \textbf{(A1.1)}--\textbf{(A1.2)}),
we have, for any $\varepsilon > 0$,
\begin{equation*}
2\langle \overline{Y}_{s},\overline{f}_{\Theta }(s)\rangle \leq \left (2+
\frac{2}{\varepsilon }\right )a^{2}_{s}|\overline{Y}_{s}|^{2} +
\varepsilon (|\overline{Z}_{s}|^{2} + |\overline{U}_{s}|^{2}),
\end{equation*}
and
\begin{equation*}
| \overline{g}_{\Theta }(s)|^{2} \le a^{2}_{s}|\overline{Y}_{s}|^{2} +
\alpha (|\overline{Z}_{s}|^{2} + |\overline{U}_{s}|^{2}).
\end{equation*}
Hence, choosing $\varepsilon > 0,\; \beta > 0 $ such that
$\varepsilon + \alpha < 1$ and $\beta > 3 + 2/\varepsilon $, we deduce
that
\begin{align*}
{\mathbf{E}}\left [e^{\beta A_{t}}|\overline{Y}_{t}|^{2}\right ] &+
\left (\beta -3-\frac{2}{\varepsilon }\right ){\mathbf{E}}\int _{t}^{T}e^{
\beta A_{s}}a^{2}_{s}|\overline{Y}_{s}|^{2}ds
\nonumber
\\
& + (1-\varepsilon -\alpha ){\mathbf{E}}\left [\int _{t}^{T}e^{\beta A_{s}}|
\overline{Z}_{s}|^{2}ds + \int _{t}^{T}e^{\beta A_{s}}\|
\overline{U}_{s}\|^{2}_{\lambda }ds\right ]\leq 0.
\end{align*}
It follows that
$(\overline{Y},\overline{Z},\overline{U})=(0,0,0)$, and thus
$(\overline{K},\overline{C})=(0,0)$.
\end{proof}

\subsection{Comparison theorem}

In all what follows, we are interested in one-dimensional RBDSDEJs (i.e.
$k =1$). We consider the RBDSDEJs associated with parameters
$(f^{i}(.,\Theta ),g(.,\Theta ),\xi ^{i})$ for $i =1,2$ where
$\Theta ^{i}$ stands for the process $(Y^{i}, Z^{i}, U^{i})$. Let us state
the following assumption
\begin{equation*}
{\mbox{\textbf{(A1.6):}}}
\begin{cases}
\xi ^{1}_{t}\le \xi ^{2}_{t} \text{ a.s.}\quad \forall t\leq T
\\
f^{1}(t,y,z,u) \le f^{2}(t,y,z,u)\text{ a.s.}\quad \forall (t,y,z,u)\in [0,
T]\times \mathbf{R}\times \mathbf{R}^{d} \times \mathscr{L}_{\lambda }.
\end{cases}\vadjust{\eject}
\end{equation*}
Then we have the following comparison result.
%
\begin{theorem}%
\label{comp-Lipsch}
Let $(Y^{i},Z^{i},U^{i},K^{i},C^{i})$ be a solution to RBDSDEJs associated
with parameters $(f^{i}(.,\Theta ),g(.,\Theta ),\xi ^{i})$ for
$i =1,2$. Under the assumptions \emph{\textbf{(A1.1)}}--\emph{\textbf{(A1.4)}} and \emph{\textbf{(A1.6)}} we
have
\begin{equation*}
\forall \, t \le T, \quad Y^{1}_{t} \le Y^{2}_{t}, \quad {\mathbf{P}}\text{-}a.s.
\end{equation*}
\end{theorem}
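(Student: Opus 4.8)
The plan is to prove $Y^1_t\le Y^2_t$ by showing $\mathbf{E}\big[e^{\beta A_t}(\overline{Y}_t^+)^2\big]=0$ for every $t\le T$, where $\overline{Y}:=Y^1-Y^2$. Since $k=1$ here, I would work with the convex function $F(x)=(x^+)^2$, which is $C^1$ with Lipschitz derivative $F'(x)=2x^+$ and $F''(x)=2\,\mathbh{1}_{\{x>0\}}$ a.e.; approximating $F$ by convex $C^2$ functions with uniformly bounded second derivatives, I apply the Gal'chouk--Lenglart formula (Theorem~\ref{gal}), weighted by $X_t=e^{\beta A_t}$ as in Lemma~\ref{lem1}, to $e^{\beta A_t}(\overline{Y}_t^+)^2$ and pass to the limit. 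Subtracting the two copies of \eqref{backw}, $\overline{Y}$ is the optional semimartingale with $\overline{Y}_T=\xi^1_T-\xi^2_T$, continuous-martingale part of quadratic variation $\int(|\overline{g}_\Theta(s)|^2+|\overline{Z}_s|^2)\,ds$ where $\overline{g}_\Theta(s)=g(s,\Theta^1_s)-g(s,\Theta^2_s)$, $\widetilde{\mu}$-integrand $\overline{U}=U^1-U^2$, driver $\overline{f}_\Theta(s)=f^1(s,\Theta^1_s)-f^2(s,\Theta^2_s)$, and finite-variation pieces $-\overline{K}$ (right-continuous predictable) and $-\overline{C}_-$ (left-continuous purely discontinuous predictable).

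First I would read off the sign of each term, exactly as in the proofs of Lemma~\ref{ll} and Theorem~\ref{existence}. The terminal term vanishes: $e^{\beta A_T}\big((\xi^1_T-\xi^2_T)^+\big)^2=0$ by \textbf{(A1.6)}. For the reflection terms: on $\{\overline{Y}_s>0\}$ we have $Y^1_s>Y^2_s\ge\xi^2_s\ge\xi^1_s$, so the Skorokhod condition makes $\int e^{\beta A_s}\overline{Y}^+_s\,dK^{1,c}_s=0$; at a predictable time $\tau$ with $\Delta K^{1,d}_\tau\neq 0$ (resp. $\Delta C^1_\tau\neq 0$) the minimality condition gives $Y^1_{\tau-}=\xi^1_{\tau-}$ (resp. $Y^1_\tau=\xi^1_\tau$), and since $Y^2\ge\xi^2\ge\xi^1$ passes to left limits along an announcing sequence (using that $\xi$ is limited from the left) one gets $\overline{Y}_{\tau-}\le 0$ (resp. $\overline{Y}_\tau\le 0$); combined with $dK^2,dC^2\ge 0$ this makes $\int_t^T e^{\beta A_s}\overline{Y}_{s-}^+\,d\overline{K}_s$ and $\int_t^T e^{\beta A_s}\overline{Y}_{s}^+\,d\overline{C}_s$ nonpositive. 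By convexity of $F$, all the jump-correction sums $\sum\big[(\overline{Y}_s^+)^2-(\overline{Y}_{s-}^+)^2-2\overline{Y}_{s-}^+\Delta\overline{Y}_s\big]$ and their right-jump analogues are nonnegative; using Remark~\ref{rem02} to separate the totally inaccessible $\widetilde{\mu}$-jumps from the predictable jumps of $\overline{K}$, I would retain from the former the lower bound $\big((\overline{Y}_{s-}-\overline{U}_s(e))^+-\overline{Y}_{s-}^+\big)^2$, which after compensation returns a $\|\overline{U}_s\|_\lambda^2$-type term to the left-hand side.

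Next I would estimate the driver. Write $\overline{f}_\Theta(s)=\big[f^1(s,\Theta^1_s)-f^1(s,\Theta^2_s)\big]+\big[f^1(s,\Theta^2_s)-f^2(s,\Theta^2_s)\big]$: on $\{\overline{Y}_s>0\}$ the second bracket is $\le 0$ by \textbf{(A1.6)}, while \textbf{(A1.1)} bounds the first by $\gamma_s|\overline{Y}_s|+\kappa_s|\overline{Z}_s|+\sigma_s\|\overline{U}_s\|_\lambda$. Using $2\overline{Y}_s^+|\overline{Y}_s|=2(\overline{Y}_s^+)^2$, Young's inequality with a free parameter $\varepsilon>0$, the bound $a_s^2=\gamma_s+\kappa_s^2+\sigma_s^2+\varrho_s$ from \textbf{(A1.2)}, the contraction estimate $|\overline{g}_\Theta(s)|^2\le\varrho_s|\overline{Y}_s|^2+\alpha(|\overline{Z}_s|^2+\|\overline{U}_s\|_\lambda^2)$ from \textbf{(A1.1)}, and taking expectations (the $dW$, $dB$ and $\widetilde{\mu}$ stochastic integrals being true martingales by the computation of Proposition~\ref{prop}), one is led, for $\varepsilon+\alpha<1$ and $\beta$ large enough, to an inequality of the form
\begin{align*}
\mathbf{E}\big[e^{\beta A_t}(\overline{Y}_t^+)^2\big] &+\Big(\beta-c_1-\frac{c_2}{\varepsilon}\Big)\,\mathbf{E}\!\int_t^T e^{\beta A_s}a_s^2(\overline{Y}_s^+)^2\,ds\\
&+(1-\varepsilon-\alpha)\,\mathbf{E}\!\int_t^T e^{\beta A_s}\mathbh{1}_{\{\overline{Y}_s>0\}}\big(|\overline{Z}_s|^2+\|\overline{U}_s\|_\lambda^2\big)\,ds\ \le\ 0,
\end{align*}
with $c_1,c_2>0$ universal. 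All three left-hand terms being nonnegative, $\overline{Y}_t^+=0$, i.e. $Y^1_t\le Y^2_t$ $\mathbf{P}$-a.s., for every $t\le T$.

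I expect the main obstacle to be the bookkeeping of the jump terms in this ``irregular barrier with general jumps'' setting. Two points require genuine care: (i) justifying the change-of-variables formula for the merely $C^1$ convex function $(x^+)^2$ applied to an optional semimartingale that simultaneously carries totally inaccessible jumps (from $\widetilde{\mu}$) and predictable jumps (from $\overline{K}$), and then splitting the jump sum according to Remark~\ref{rem02} so that the $\|\overline{U}_s\|_\lambda^2$ contribution generated by the $\sigma_s\|\cdot\|_\lambda$ part of \textbf{(A1.1)} is actually recovered on the left-hand side; and (ii) making the sign analysis of the Mertens/Skorokhod terms rigorous when the barrier is only right upper semi-continuous, i.e. propagating $Y^i\ge\xi^i$ to left limits at predictable times so that $\overline{Y}_{s-}\,d\overline{K}_s\le 0$ and $\overline{Y}_s\,d\overline{C}_s\le 0$. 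Once these are settled, the rest is the same Gronwall-type argument already used for the a priori estimate of Lemma~\ref{ll}.
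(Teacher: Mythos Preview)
Your proposal is correct and follows essentially the same route as the paper: apply the Gal'chouk--Lenglart formula to $e^{\beta A_t}(\overline{Y}_t^+)^2$, discard the reflection integrals via the Skorokhod/minimality conditions together with $\xi^1\le\xi^2$, split the driver using \textbf{(A1.6)} and the stochastic Lipschitz bound, apply \textbf{(A1.1)} to $|\overline{g}_\Theta|^2$, and absorb everything for $\beta$ large. You are in fact more explicit than the paper about two points it glosses over---the approximation needed to justify the change-of-variables for the merely $C^{1,1}$ function $(x^+)^2$, and the sign analysis of the $d\overline{K}$, $d\overline{C}$ terms---so there is no gap; the only cosmetic difference is that the paper decomposes the driver as $f^1(\cdot,\Theta^1)-f^2(\cdot,\Theta^1)$ plus $f^2(\cdot,\Theta^1)-f^2(\cdot,\Theta^2)$ (Lipschitz of $f^2$) whereas you use Lipschitz of $f^1$, which is equivalent under \textbf{(A1.1)}.
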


\begin{proof}
Define $\widehat{\Re }=\Re ^{1}-\Re ^{2}$ for
$\Re \in \{Y,Z,U,K,C,\xi \}$. Then the process
$(\widehat{Y},\widehat{Z},\widehat{U},\break \widehat{K},\widehat{C})$ satisfies
the following equation
\begin{align*}
\widehat{Y}_{t} =\;\widehat{\xi }_{T} &+ \int _{t}^{T}[f^{1}(s,
\Theta _{s}^{1})-f^{2}(s,\Theta _{s}^{2})]ds + \int _{t}^{T}[g(s,
\Theta _{s}^{1})-g(s,\Theta _{s}^{2})]dB_{s}
\\
&-\int _{t}^{T}\widehat{Z}_{s}dW_{s} - \int _{t}^{T} \int _{E}
\widehat{U}_{s}(e)\widetilde{\mu }(ds,de)+\widehat{K}_{T}-\widehat{K}_{t}+
\widehat{C}_{T-}-\widehat{C}_{t-}.
\end{align*}
Applying Lemma \ref{lem1}, taking into account Remark \ref{rem02} and
taking the expectation, we obtain, for all $t\leq T$,
%
\begin{eqnarray}
\label{eqx}
&&{\mathbf{E}}\left [e^{\beta A_{t}}|\widehat{Y}_{t}^{+}|^{2}\right ] +
\beta {\mathbf{E}}\int _{t}^{T}\mathbh{1}_{\left \{  \widehat{Y}_{s} > 0
\right \}  }e^{\beta A_{s}}a^{2}_{s}|\widehat{Y}_{s}|^{2}ds
\nonumber
\\
&& +{\mathbf{E}}\int _{t}^{T} \mathbh{1}_{\left \{  \widehat{Y}_{s} > 0
\right \}  } e^{\beta A_{s}} |\widehat{Z}_{s}|^{2}ds + {\mathbf{E}}\int _{t}^{T}
\mathbh{1}_{\left \{  \widehat{Y}_{s}>0\right \}  }e^{\beta A_{s}}\|
\widehat{U}_{s}\|^{2}_{\lambda }ds
\nonumber
\\
&\leq & {\mathbf{E}}\left [e^{\beta A_{T}}|\widehat{\xi }_{T}^{+}|^{2}
\right ] + 2{\mathbf{E}}\int _{t}^{T} \mathbh{1}_{\left \{  \widehat{Y}_{s}>0
\right \}  } e^{\beta A_{s}}\widehat{Y}_{s}^{+}[f^{1}(s,\Theta _{s}^{1})-f^{2}(s,
\Theta _{s}^{2})]ds
\nonumber
\\
&&+ {\mathbf{E}}\int _{t}^{T} \mathbh{1}_{\left \{  \widehat{Y}_{s}>0
\right \}  }e^{\beta A_{s}}|g(s,\Theta _{s}^{1})-g(s,\Theta _{s}^{2})|^{2}ds.
\end{eqnarray}
By assumption \textbf{(A1.6)}, we have
${\mathbf{E}}[e^{\beta A_{T}} |\widehat{\xi }_{T}^{+}|]=0$ and
$\widehat{Y}_{s}^{+}[f^{1}(s,\Theta _{s}^{1})-f^{2}(s,\Theta _{s}^{1})]
\leq 0$, and due to the assumptions \textbf{(A1.1)}--\textbf{(A1.2)}, we get,
for any $\varepsilon >0$,
\begin{eqnarray*}
&&2{\mathbf{E}}\int _{t}^{T} \mathbh{1}_{\left \{  \widehat{Y}_{s}>0\right
\}  } e^{\beta A_{s}}\widehat{Y}_{s}^{+}[f^{1}(s,\Theta _{s}^{1}) -f^{2}(s,
\Theta _{s}^{2})]ds
\\
&\leq & 2{\mathbf{E}}\int _{t}^{T} \mathbh{1}_{\left \{  \widehat{Y}_{s}>0
\right \}  } e^{\beta A_{s}}\widehat{Y}_{s}^{+}[f^{2}(s,\Theta _{s}^{1})-f^{2}(s,
\Theta _{s}^{2})]ds
\\
&\leq & \left (2 + \frac{2}{\varepsilon }\right ){\mathbf{E}}\int _{t}^{T}
\mathbh{1}_{\left \{  \widehat{Y}_{s}>0\right \}  } e^{\beta A_{s}} a^{2}_{s}|
\widehat{Y}_{s}^{+}|^{2}ds + \varepsilon {\mathbf{E}}\int _{t}^{T}
\mathbh{1}_{\left \{  \widehat{Y}_{s} > 0\right \}  } e^{\beta A_{s}} |
\widehat{Z}_{s}|^{2}ds
\\
&&+ \varepsilon {\mathbf{E}}\int _{t}^{T} \mathbh{1}_{\left \{  \widehat{Y}_{s}>0
\right \}  } e^{\beta A_{s}} \|\widehat{U}_{s}\|^{2}_{\lambda }ds
\end{eqnarray*}
and
\begin{eqnarray*}
&&{\mathbf{E}}\int _{t}^{T}\mathbh{1}_{\left \{  \widehat{Y}_{s}>0\right \}  }
e^{\beta A_{s}} |g(s,\Theta _{s}^{1})- g(s,\Theta _{s}^{2})|^{2}ds
\\
&\leq & {\mathbf{E}}\int _{t}^{T}\mathbh{1}_{\left \{  \widehat{Y}_{s}>0
\right \}  } e^{\beta A_{s}} a^{2}_{s} |\widehat{Y}_{s}|^{2}ds +
\alpha {\mathbf{E}}\int _{t}^{T}\mathbh{1}_{\left \{  \widehat{Y}_{s} > 0
\right \}  } e^{\beta A_{s}} (|\widehat{Z}_{s}|^{2}+\|\widehat{U}_{s}
\|_{\lambda }^{2})ds.
\end{eqnarray*}
Plugging these two last inequalities in \eqref{eqx}, we deduce that, for
any $\beta >0$ and $\varepsilon >0$,
\begin{eqnarray*}
&&{\mathbf{E}}\left [e^{\beta A_{t}}|\widehat{Y}_{t}^{+}|^{2}\right ] +
\beta {\mathbf{E}}\int _{t}^{T}\mathbh{1}_{\left \{  \widehat{Y}_{s} > 0
\right \}  }e^{\beta A_{s}}a^{2}_{s}|\widehat{Y}_{s}|^{2}ds
\nonumber
\\
&& +{\mathbf{E}}\int _{t}^{T} \mathbh{1}_{\left \{  \widehat{Y}_{s} > 0
\right \}  } e^{\beta A_{s}} |\widehat{Z}_{s}|^{2}ds + {\mathbf{E}}\int _{t}^{T}
\mathbh{1}_{\left \{  \widehat{Y}_{s}>0\right \}  }e^{\beta A_{s}}\|
\widehat{U}_{s}\|^{2}_{\lambda }ds
\nonumber
\\
&\leq & \left (3 + \frac{2}{\varepsilon }\right ){\mathbf{E}}\int _{t}^{T}
\mathbh{1}_{\left \{  \widehat{Y}_{s}>0\right \}  } e^{\beta A_{s}} a^{2}_{s}|
\widehat{Y}_{s}^{+}|^{2}ds
\nonumber
\\
&&+ (\varepsilon + \alpha )\left ({\mathbf{E}}\int _{t}^{T} \mathbh{1}_{
\left \{  \widehat{Y}_{s}>0\right \}  } e^{\beta A_{s}} |\widehat{Z}_{s}|^{2}ds
+ {\mathbf{E}}\int _{t}^{T}\mathbh{1}_{\left \{  \widehat{Y}_{s}>0\right \}  }
e^{\beta A_{s}} \|\widehat{U}_{s}\|^{2}_{\lambda }ds\right ).
\end{eqnarray*}
Choosing $\varepsilon =(1-\alpha )/2$ and taking
$\beta > 3 + 2/\varepsilon $, we derive that
\begin{equation*}
|\widehat{Y}_{t}^{+}|^{2} = 0 \quad \text{a.s.} \quad\forall t\leq T, \quad\text{i.e.,}\quad Y_{t}^{1}
\leq Y_{t}^{2}\quad \text{a.s.}\quad \forall t\leq T.\qedhere
\end{equation*}
\end{proof}

\section{Reflected BDSDEJs with stochastic growth condition}
\label{s3}

In this section we are interested in weakening the conditions on the coefficient
$f$. We are also interested in one-dimensional RBDSDEJs (i.e. $k =1$).
Let us state the new working assumptions.

\subsection{Assumptions}

We assume that the data $(f,g,\xi )$ satisfy the following assumptions
\textbf{(A2)}:
\begin{description}
\item[\bf(A2.1):] There exist four non-negative
${\mathscr{F}}^{W}_{t}$-measurable processes
$(\gamma _{t})_{t\leq T}$, $(\kappa _{t})_{t\leq T}$,
$(\sigma _{t})_{t\leq T}$ and $(\varrho _{t})_{t\leq T}$ such that the
condition \textbf{(A1.2)} holds, and there exists another
${\mathcal{F}}_{t}$-progressively measurable nonnegative process
$(\zeta _{t})_{t\le T}$ such that
$\displaystyle \frac{\zeta }{a}\in \mathscr{M}^{2}_{\beta }(\mathbf{R})$
and for all
$(t,y,z,u)\in [0, T]\times \mathbf{R}\times \mathbf{R}^{d}\times
\mathscr{L}_{\lambda }$,
\begin{equation*}
\vert f(t,y,z,u)\vert \le \zeta _{t}+ \gamma _{t} |y| + \kappa _{t} |z|
+ \sigma _{t}\|u\|_{\lambda }.
\end{equation*}
\item[\bf(A2.2):]
$f(\omega , t, \cdot , \cdot , \cdot ) : \mathbf{R}\times
\mathbf{R}^{d} \times \mathscr{L}_{\lambda }\to \mathbf{R}$ is continuous.
\item[\bf(A2.3):] The coefficient $g$ satisfies \textbf{(A1.1)}
for $\alpha \in\ ]0,1/2[\,$.
\item[\bf(A2.4):] The irregular barrier\index{irregular barrier}
$(\xi _{t})_{t\leq T}$ satisfies \textbf{(A1.4)}.
\end{description}

\subsection{Existence of a minimal solution\index{minimal solution}}

In this section, we will prove the existence of a minimal solution\index{minimal solution} to
RBDSDEJ \eqref{backw} under the conditions \textbf{(A2)}. First let us define
a minimal solution\index{minimal solution} 
as follows.
%
\begin{definition}
A solution $(Y,Z,U,K,C)$ to RBDSDEJ \eqref{backw} is called a minimal solution\index{minimal solution}
if for any other solution
$(Y^{\ast },Z^{\ast },U^{\ast },K^{\ast },C^{\ast })$ to \eqref{backw} we have, for
each $t \le T$,  $Y_{t} \le Y^{\ast }_{t}$.
\end{definition}

For fixed $(\omega ,t)$ in $\Omega \times [0,T]$, we define the sequence
$f_{n}(t,y,z,u)$ associated to the coefficient $f$ as follows: for all
$(y,y^{\prime })\in \mathbf{R}^{2}$,
$(z,z^{\prime })\in \mathbf{R}^{d}\times \mathbf{R}^{d}$ and
$(u,u^{\prime })\in \mathscr{L}_{\lambda }\times \mathscr{L}_{\lambda }$,
\begin{equation*}
f_{n}(t, y, z, u) = \inf _{y^{\prime }, z^{\prime }, u^{\prime }} [f(t, y^{\prime }, z^{\prime }, u^{\prime })+ n (\gamma _{t}|y-y^{\prime }|+ \kappa _{t}|z-z^{\prime }|+\sigma _{t}\|u-u^{\prime }\|_{\lambda })].
\end{equation*}
From Proposition 4.2 in \cite{sow-sagna}, the sequence $f_{n}$ is well
defined for each $n\ge 1$, and it satisfies:
\begin{itemize}
\item Linear growth condition: $\forall n\ge 1$,
%
\begin{equation}
\label{linear}
\forall (y, z, u)\in \mathbf{R}\times \mathbf{R}^{d}\times
\mathscr{L}_{\lambda }, \quad |f_{n}(t, y, z, u)| \le \zeta _{t} +
\gamma _{t}|y| + \kappa _{t} |z|+ \sigma _{t} \|u\|_{\lambda }.
\end{equation}
\item Monotonicity:
$\forall (y, z, u)\in \mathbf{R}\times \mathbf{R}^{d}\times
\mathscr{L}_{\lambda }$, $f_{n}(t, y, z, u)$ increases in $n$.
\item Convergence: If $ (y_{n}, z_{n}, u_{n}) \to (y, z, u)$ in
$\mathbf{R}\times \mathbf{R}^{d} \times \mathscr{L}_{\lambda }$ as
$n\to +\infty $, then
%
\begin{equation}
\label{convergence}
f_{n}(t, y_{n}, z_{n}, u_{n}) \; \xrightarrow[n\to +\infty ]{} f(t, y,
z, u).
\end{equation}
\item Lipschitz condition: $\forall n\ge 1$, and for all
$(y,y^{\prime })\in \mathbf{R}^{2}$,
$(z,z^{\prime })\in \mathbf{R}^{d}\times \mathbf{R}^{d}$ and
$(u,u^{\prime })\in \mathscr{L}_{\lambda }\times \mathscr{L}_{\lambda }$, we
have
\begin{equation*}
|f_{n}(t, y, z, u) - f_{n}(t, y^{\prime }, z^{\prime }, u^{\prime })| \le n
\gamma _{t}|y - y^{\prime }| + n\kappa _{t} |z - z^{\prime }| + n \sigma _{t}
\|u - u^{\prime }\|_{\lambda }.
\end{equation*}
\end{itemize}
We also define the function
\begin{equation*}
F(t,y,z,u)= \zeta _{t}+\gamma _{t} |y|+\kappa _{t}|z|+\sigma _{t}\|u
\|_{\lambda }.
\end{equation*}
Now, from Theorem \ref{existence}, there exist two processes
$\overline{\Theta }:=(\overline{Y}, \overline{Z}, \overline{U})$ and
$\Theta ^{n}:=(Y^{n}, Z^{n}, U^{n})$ which are the solutions to RBDSDEJs
associated with parameters{\break}
$(F(.,\overline{\Theta }),g(.,\overline{\Theta }),\xi )$ and
$(f_{n}(.,\Theta ^{n}),g(.,\Theta ^{n}),\xi )$, respectively.

From the definitions of $f_{n}$ and $F$ together with \eqref{linear}, we
observe that $\forall n\geq 1$, $f_{n}\leq f_{n+1}\leq F$. Then, due
to Theorem \ref{comp-Lipsch} we have
%
\begin{equation}
\label{estim-Yn}
\forall t \leq T, \quad Y_{t}^{1}\le Y_{t}^{n}\le Y_{t}^{n+1}\le
\overline{Y}_{t}.
\end{equation}

The proof of the main result of this section is based on the two next lemmas.
%
\begin{lemma}%
\label{borne}
Under the assumption \textbf{(A2)}, there exists a positive constant
$\Lambda $ depending on $\beta $ such that
\begin{equation*}
\left \Vert ~(\overline{Y}, \overline{Z}, \overline{U}) \right \Vert _{
\mathscr{B}^{2}_{\beta }(\mathbf{R})}^{2} \le \Lambda \left (\|\xi \|_{
\mathscr{S}^{2}_{2\beta }(\mathbf{R})}^{2} + \left \|
\frac{\zeta }{a}\right \|  ^{2}_{\mathscr{M}^{2}_{\beta }(\mathbf{R})} +
\|g(., 0)\|^{2}_{\mathscr{M}^{2}_{\beta }(\mathbf{R}^{\ell })}\right )
\end{equation*}
and for each $n\geq 1$
\begin{equation*}
\left \Vert (Y^{n}, Z^{n}, U^{n}) \right \Vert _{ \mathscr{B}^{2}_{\beta }(\mathbf{R})}^{2} \le \Lambda \left (\|\xi \|_{\mathscr{S}^{2}_{2
\beta }(\mathbf{R})}^{2} + \left \|  \frac{\zeta }{a}\right \|  ^{2}_{
\mathscr{M}^{2}_{\beta }(\mathbf{R})} + \|g(., 0)\|^{2}_{\mathscr{M}^{2}_{\beta }(\mathbf{R}^{\ell })}\right ).
\end{equation*}
\end{lemma}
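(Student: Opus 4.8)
The plan is to derive a single a priori estimate that is uniform in $n$ and that also covers $\overline\Theta$, by applying Lemma \ref{lem1} to the RBDSDEJ satisfied by each of these processes and then exploiting the linear growth bounds. Since $\overline\Theta$ solves the RBDSDEJ with parameters $(F(\cdot,\overline\Theta),g(\cdot,\overline\Theta),\xi)$, and by \eqref{linear} the coefficient $f_n(\cdot,\Theta^n)$ obeys the same pointwise bound $|f_n(t,y,z,u)|\le F(t,y,z,u)$, the two estimates will follow from one computation. So first I would fix $\beta$ large (to be chosen) and write, for $\Theta\in\{\overline\Theta,\Theta^n\}$ with the associated Mertens pair $(K,C)$, the It\^o--Gal'chouk--Lenglart identity from Lemma \ref{lem1} applied between $t$ and $T$, exactly as in the proof of Proposition \ref{pro1} and Theorem \ref{existence}. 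The jump term $\sum e^{\beta A_s}(\Delta Y_s)^2$ is split using Remark \ref{rem02} ($K$ and $\mu$ have no common jumps) into the compensated measure part plus $\sum e^{\beta A_s}(\Delta K_s)^2\ge 0$, which lets one replace $\int_E\|\,\cdot\,\|_\lambda^2$ by the $\widetilde\mu$-integral up to a favorable sign; similarly the Skorokhod/minimality conditions give $\langle Y_{s-},dK_s\rangle\le0$ and $\langle Y_s,dC_s\rangle\le0$.

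Next I would estimate the drift cross term. Using $|f_n(s,\Theta_s^n)|\le\zeta_s+\gamma_s|Y^n_s|+\kappa_s|Z^n_s|+\sigma_s\|U^n_s\|_\lambda$ together with $a_s^2=\gamma_s+\kappa_s^2+\sigma_s^2+\varrho_s$ (assumption \textbf{(A1.2)}), a Young-type split yields, for every $\varepsilon>0$,
\begin{equation*}
2\langle Y_s,f_n(s,\Theta^n_s)\rangle\le \Big(1+\tfrac{1}{\varepsilon}\Big)a_s^2|Y_s|^2+\tfrac{1}{\varepsilon}\Big|\tfrac{\zeta_s}{a_s}\Big|^2+\varepsilon\big(|Z_s|^2+\|U_s\|_\lambda^2\big),
\end{equation*}
and the exact same bound with $f_n,\Theta^n$ replaced by $F,\overline\Theta$. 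For the Brownian-$B$ term I use \textbf{(A2.3)} (hence $\alpha\in\ ]0,1/2[$): $|g(s,\Theta_s)|^2\le 2\varrho_s|Y_s|^2+2\alpha(|Z_s|^2+\|U_s\|_\lambda^2)+2|g(s,0)|^2\le 2a_s^2|Y_s|^2+2\alpha(|Z_s|^2+\|U_s\|_\lambda^2)+2|g(s,0)|^2$. Substituting all of this into the Lemma \ref{lem1} identity, taking expectations (the stochastic integrals against $dW$, $dB$, $\widetilde\mu$ are true martingales by Proposition \ref{prop}), and collecting the $\int e^{\beta A_s}a_s^2|Y_s|^2$, $\int e^{\beta A_s}|Z_s|^2$ and $\int e^{\beta A_s}\|U_s\|_\lambda^2$ terms, one obtains for suitable $\varepsilon$ with $2\varepsilon+2\alpha<1$ and $\beta$ larger than $3+1/\varepsilon$ (say) a bound
\begin{equation*}
\|Y\|_{\mathscr M^{2,a}_\beta}^2+\|Z\|_{\mathscr M^2_\beta}^2+\|U\|_{\mathscr L^2_\beta}^2\le c(\beta)\Big(\|\xi\|_{\mathscr S^2_{2\beta}}^2+\big\|\tfrac{\zeta}{a}\big\|_{\mathscr M^2_\beta}^2+\|g(\cdot,0)\|_{\mathscr M^2_\beta}^2\Big),
\end{equation*}
with $c(\beta)$ independent of $n$; here $e^{\beta A_T}|\xi_T|^2$ is dominated by $\operatorname{ess\,sup}_\tau e^{2\beta A_\tau}|\xi_\tau|^2$, and one must also recall from Corollary \ref{ap2}/Proposition \ref{pro1} that $\mathbf E|K_T|^2+\mathbf E|C_T|^2$ is itself controlled by the right-hand side (this is why the constant depends on $\beta$ but not on $n$).

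Finally, for the $\mathscr S^2_\beta$ part of the $\mathscr B^2_\beta$-norm I would go back to the Lemma \ref{lem1} identity, take the essential supremum over $\tau\in\mathcal T_{[0,T]}$ before taking expectations, and handle the three martingale terms by the Burkh\"older--Davis--Gundy inequality exactly as in the proof of Lemma \ref{ll}: each is bounded by $\tfrac14\|Y\|_{\mathscr S^2_\beta}^2+4c^2(\|Z\|_{\mathscr M^2_\beta}^2+\|U\|_{\mathscr L^2_\beta}^2+\|g(\cdot,0)\|_{\mathscr M^2_\beta}^2+\cdots)$. Absorbing the three $\tfrac14\|Y\|_{\mathscr S^2_\beta}^2$ into the left-hand side and then invoking the $\mathscr M^{2,a}_\beta\times\mathscr M^2_\beta\times\mathscr L^2_\beta$ bound just obtained yields $\|Y\|_{\mathscr S^2_\beta}^2\le \Lambda(\cdots)$ with $\Lambda$ uniform in $n$. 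Taking $\Lambda$ to be the larger of the two resulting constants gives both claimed inequalities. The main obstacle is bookkeeping rather than conceptual: one must be careful that the constant coming out of the drift/growth splitting and, crucially, out of the estimate on $\mathbf E|K_T|^2+\mathbf E|C_T|^2$ does not secretly depend on $n$ — this is guaranteed because \eqref{linear} bounds every $f_n$ by the fixed function $F$, so the same $\zeta,\gamma,\kappa,\sigma,\varrho$ (hence the same $a$) work for all $n$ simultaneously.
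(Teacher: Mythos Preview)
Your proposal has a genuine gap at the point where you invoke the Skorokhod and minimality conditions. You assert that they give $\langle Y_{s-},dK_s\rangle\le0$ and $\langle Y_s,dC_s\rangle\le0$, but that inequality is the one established in Lemma~\ref{ll} for the \emph{difference} $Y^1-Y^2$ of two solutions sharing the same barrier (where the sign comes from $Y^i\ge\xi$ for both $i$). For a single solution the Skorokhod/minimality conditions say only that $\overline Y_{s-}\,d\overline K_s=\xi_{s-}\,d\overline K_s$ and $\overline Y_s\,d\overline C_s=\xi_s\,d\overline C_s$ on the relevant supports; these terms have no reason to be nonpositive and cannot be discarded.

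The paper handles this by keeping $2\int e^{\beta A_s}\overline Y_{s-}\,d\overline K_s+2\int e^{\beta A_s}\overline Y_s\,d\overline C_s$ on the right, bounding it via Young by $\varepsilon\,\|\xi\|_{\mathscr S^2_{2\beta}}^2+\varepsilon^{-1}\mathbf E(\overline K_T+\overline C_T)^2$, and then estimating $\mathbf E(\overline K_T+\overline C_T)^2$ \emph{directly from the equation} \eqref{eqqn}: one writes $\overline K_T+\overline C_{T-}$ as $\overline Y_0-\xi_T-\int_0^T F\,ds-\int_0^T g\,dB_s+\int_0^T\overline Z\,dW_s+\int_0^T\!\!\int_E\overline U\,\widetilde\mu$, squares, and uses the isometries. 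This reintroduces $\|\overline Y\|_{\mathscr M^{2,a}_\beta}^2$, $\|\overline Z\|_{\mathscr M^2_\beta}^2$, $\|\overline U\|_{\mathscr L^2_\beta}^2$ on the right with a factor of order $\varepsilon^{-1}$, and these are absorbed into the left by choosing $\varepsilon$ small and $\beta$ large. Your suggested shortcut through Corollary~\ref{ap2}/Proposition~\ref{pro1} does not avoid this loop: since the ``driver'' here is $F(\cdot,\overline\Theta)$ (resp.\ $g(\cdot,\overline\Theta)$), the bound produced by Corollary~\ref{ap2} already contains the unknown $\overline\Theta$ and does not reduce to $\|\xi\|^2+\|\zeta/a\|^2+\|g(\cdot,0)\|^2$ without the same absorption step. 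Once this is repaired, the remainder of your outline (BDG for the $\mathscr S^2_\beta$ part, and the use of \eqref{linear} to make everything uniform in $n$) agrees with the paper.
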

\begin{proof}
We know that
%
\begin{eqnarray}
\label{eqqn}
\overline{Y}_{t} &=&\xi _{T} + \int _{t}^{T}F(s,\overline{\Theta }_{s})ds
+ \int _{t}^{T}g(s,\overline{\Theta }_{s})dB_{s}-\int _{t}^{T}
\overline{Z}_{s}dW_{s}
\nonumber
\\
&&- \int _{t}^{T} \int _{E}\overline{U}_{s}(e)\widetilde{\mu }(ds,de)+
\overline{K}_{T}-\overline{K}_{t}+\overline{C}_{T-}-\overline{C}_{t-},
\end{eqnarray}
where $(\overline{K},\overline{C})$ satisfies the Skorokhod\index{Skorokhod condition} and minimality
conditions.\index{minimality condition} Then, applying Lemma \ref{lem1} together with Remark \ref{rem02}, we deduce
%
\begin{eqnarray}
\label{Ito-Yt}
&&e^{\beta A_{t}}|\overline{Y}_{t}|^{2} + \beta \int _{t}^{T}e^{\beta A_{s}}a^{2}_{s}|
\overline{Y}_{s}|^{2}ds + \int _{t}^{T}e^{\beta A_{s}}|\overline{Z}_{s}|^{2}ds
+ \int _{t}^{T}e^{\beta A_{s}}\|\overline{U}_{s}\|^{2}_{\lambda }ds
\nonumber
\\
&\leq & e^{\beta A_{T}}|\xi |^{2} + 2\int _{t}^{T} e^{\beta A_{s}}
\overline{Y}_{s} F(s,\overline{\Theta }_{s})ds + 2 \int _{t}^{T} e^{
\beta A_{s}}\overline{Y}_{s-} g(s, \overline{\Theta }_{s})dB_{s}
\nonumber
\\
&& -2\int _{t}^{T} e^{\beta A_{s}} \overline{Y}_{s-} \overline{Z}_{s} dW_{s}
- 2\int _{t}^{T}\int _{E} e^{\beta A_{s}} \overline{Y}_{s-}
\overline{U}_{s}(e)\widetilde{\mu }(ds, de)
\nonumber
\\
&&+ \int _{t}^{T} e^{\beta A_{s}}|g(s,\overline{\Theta }_{s})|^{2}ds+ 2
\int _{t}^{T} e^{\beta A_{s}}\overline{Y}_{s-}d\overline{K}_{s}+ 2
\int _{t}^{T} e^{\beta A_{s}}\overline{Y}_{s}d\overline{C}_{s}.
\end{eqnarray}
But for any $\beta >0$ and $\varepsilon > 0 $,
\begin{equation*}
2\overline{Y}_{s} F(s,\overline{\Theta }_{s})\le \left (
\frac{\beta }{2}+ 2 + \frac{2}{\varepsilon }\right )a^{2}_{s} |
\overline{Y}_{s}|^{2} + \frac{2}{\beta }\left |
\frac{\zeta _{s}}{a_{s}}\right |^{2} + \varepsilon (|\overline{Z}_{s}|^{2}+
\|\overline{U}_{s}\|_{\lambda }^{2})
\end{equation*}
and
\begin{eqnarray*}
|g(s,\overline{\Theta }_{s})|^{2} &\leq & 2\left (|g(s,
\overline{\Theta }_{s})-g(s,0)|^{2} + |g(s,0)|^{2}\right )
\nonumber
\\
& \leq & 2\left (a^{2}_{s}|\overline{Y}_{s}|^{2} + \alpha (|
\overline{Z}_{s}|^{2} + \|\overline{U}_{s}\|_{\lambda }^{2})+ |g(s,0)|^{2}
\right ).
\end{eqnarray*}
Plugging these inequalities in \eqref{Ito-Yt} and taking expectation, we
obtain
%
\begin{eqnarray}
\label{eqn8}
&&\left (\frac{\beta }{2} - 4 - \frac{2}{\varepsilon }\right ){\mathbf{E}}
\int _{t}^{T} e^{\beta A_{s}}a^{2}_{s}|\overline{Y}_{s}|^{2}ds + (1 -
\varepsilon - 2\alpha ) {\mathbf{E}}\int _{t}^{T}e^{\beta A_{s}}|
\overline{Z}_{s}|^{2}ds
\nonumber
\\
&&+ (1 - \varepsilon - 2\alpha ){\mathbf{E}}\int _{t}^{T}e^{\beta A_{s}}\|
\overline{U}_{s}\|^{2}_{\lambda }ds
\nonumber
\\
&\leq & {\mathbf{E}}\left ( e^{\beta A_{T}}|\xi |^{2} + \frac{2}{\beta }
\int _{t}^{T}e^{\beta A_{s}}\left |\frac{\zeta _{s}}{a_{s}}\right |^{2}ds
+ 2\int _{t}^{T}e^{\beta A_{s}}|g(s, 0)|^{2}ds\right .
\nonumber
\\
&&\left .+ 2 \int _{t}^{T} e^{\beta A_{s}}\overline{Y}_{s-}d
\overline{K}_{s}+ 2 \int _{t}^{T} e^{\beta A_{s}}\overline{Y}_{s}d
\overline{C}_{s}\right ).
\end{eqnarray}
Moreover,
\begin{eqnarray*}
&&%
\hspace{-1cm}%
2 {\mathbf{E}}\int _{t}^{T} e^{\beta A_{s}}\overline{Y}_{s-}d\overline{K}_{s}+
2 {\mathbf{E}}\int _{t}^{T} e^{\beta A_{s}}\overline{Y}_{s}d\overline{C}_{s}
\\
&&%
\hspace{2cm}%
\;\;\leq \;\;2{\mathbf{E}}\operatorname*{ess
\,sup}_{\tau \in \mathcal{T}_{[0,T]}}e^{\beta A_{\tau }}|\xi _{\tau }|
\int _{0}^{T}d(\overline{K}_{t}+\overline{C}_{t})
\\
&&%
\hspace{2cm}%
\;\;\leq \;\;\varepsilon {\mathbf{E}}\operatorname*{ess
\,sup}_{\tau \in \mathcal{T}_{[0,T]}}e^{2\beta A_{\tau }}|\xi _{\tau }|^{2}+
\frac{1}{\varepsilon }{\mathbf{E}}(\overline{K}_{T}+\overline{C}_{T})^{2},
\end{eqnarray*}
and from \eqref{eqqn} we have
\begin{eqnarray*}
&&{\mathbf{E}}(\overline{K}_{T}+\overline{C}_{T})^{2}
\\
&\leq &6{\mathbf{E}}\left (\overline{Y}_{0}^{2}+\xi _{T}^{2} + \left |
\int _{0}^{T}F(s,\overline{\Theta }_{s})ds\right |^{2} + \left |\int _{0}^{T}g(s,
\overline{\Theta }_{s})dB_{s}\right |^{2}+\left |\int _{0}^{T}
\overline{Z}_{s}dW_{s}\right |^{2} \right .
\\
&&\left .
\hspace{1cm}
+\left |\int _{0}^{T} \int _{E}\overline{U}_{s}(e)\widetilde{\mu }(ds,de)
\right |^{2}\right )
\\
&\leq &6{\mathbf{E}}\left (\overline{Y}_{0}^{2}+\xi _{T}^{2} +
\frac{1}{\beta }\int _{0}^{T}e^{\beta A_{s}}\left |
\frac{F(s,\overline{\Theta }_{s})}{a_{s}}\right |^{2}ds\right .
\\
&&\left .
\hspace{1cm}
+c\left ( \int _{0}^{T}|g(s,\overline{\Theta }_{s})|^{2}ds+\int _{0}^{T}|
\overline{Z}_{s}|^{2}ds +\int _{0}^{T}\|\overline{U}_{s}\|_{\lambda }^{2}
ds\right )\right )
\\
&\leq &6{\mathbf{E}}\left (2\operatorname*{ess
\,sup}_{\tau \in \mathcal{T}_{[0,T]}}e^{2\beta A_{\tau }}|\xi _{\tau }|^{2}+
\frac{4}{\beta }\int _{0}^{T}e^{\beta A_{s}}\left |
\frac{\zeta _{s}}{a_{s}}\right |^{2}ds+2c\int _{0}^{T}e^{\beta A_{s}}|g(s,0)|^{2}ds
\right .
\\
&&\left .
\hspace{1cm}
+\left (\frac{4}{\beta }+2c\right )\int _{0}^{T} e^{\beta A_{s}}a^{2}_{s}|
\overline{Y}_{s}|^{2}ds \right .
\\
&&\left .
\hspace{1cm}
+ \left (\frac{4}{\beta }+2\alpha c+c\right )\left (\int _{0}^{T}e^{
\beta A_{s}}|\overline{Z}_{s}|^{2}ds+\int _{0}^{T}e^{\beta A_{s}}\|
\overline{U}_{s}\|^{2}_{\lambda }ds\right ) \right ).
\end{eqnarray*}
Then \eqref{eqn8} becomes
\begin{eqnarray*}
&&\phi _{1}{\mathbf{E}}\int _{t}^{T} e^{\beta A_{s}}a^{2}_{s}|\overline{Y}_{s}|^{2}ds
+ \phi _{2} {\mathbf{E}}\int _{t}^{T}e^{\beta A_{s}}|\overline{Z}_{s}|^{2}ds+
\phi _{2}{\mathbf{E}}\int _{t}^{T}e^{\beta A_{s}}\|\overline{U}_{s}\|^{2}_{\lambda }ds
\nonumber
\\
&\leq & \Lambda _{1}{\mathbf{E}}\left (\operatorname*{ess
\,sup}_{\tau \in \mathcal{T}_{[0,T]}}e^{2\beta A_{\tau }}|\xi _{\tau }|^{2}
+ \int _{0}^{T}e^{\beta A_{s}}\left |\frac{\zeta _{s}}{a_{s}}\right |^{2}ds
+ \int _{0}^{T}e^{\beta A_{s}}|g(s, 0)|^{2}ds\right ).
\end{eqnarray*}
where
$\phi _{1}=\frac{\beta }{2} - 4 - \frac{2}{\varepsilon }-
\frac{6}{\varepsilon }\bigl (\frac{4}{\beta }+2c \bigr)$,
$\phi _{2}=1 - \varepsilon - 2\alpha -\frac{6}{\varepsilon }\bigl (
\frac{4}{\beta }+2\alpha c+c \bigr)$ and $\Lambda _{1}$ is a nonnegative
constant depending on $\beta $, $c$ and $\varepsilon $. Now,
choose
$ \varepsilon \leq 1- 2\alpha $ with $0<\alpha <1/2$ and $\beta >0$ such
that $\varepsilon \beta (\beta -12-24c)>48$ (these choices are suitable
to obtain a nonnegative $\phi _{1}$ and $\phi _{2}$). Hence
%
\begin{equation}
\label{eqn9}
\left \|  \overline{Y}, \overline{Z}, \overline{U}\right \|  ^{2}_{
\mathcal{A}^{2}_{\beta }( \mathbf{R})} \le \Lambda _{1}\left (\|\xi \|_{
\mathscr{S}^{2}_{2\beta }(\mathbf{R})}^{2} + \left \|
\frac{\zeta }{a}\right \|  ^{2}_{\mathscr{M}^{2}_{\beta }(\mathbf{R})} +
\|g(., 0)\|^{2}_{\mathscr{M}^{2}_{\beta }(\mathbf{R}^{\ell })}\right ).\vadjust{\eject}
\end{equation}
To conclude, we need an estimate of
$\left \|  \overline{Y}\right \|  ^{2}_{{\mathscr{S}}^{2}_{\beta }(
\mathbf{R})}$. For this, using \eqref{Ito-Yt} once again and \eqref{eqn9}, we have
\begin{eqnarray*}
&&{\mathbf{E}}\operatorname*{ess
\,sup}_{\tau \in \mathcal{T}_{[0,T]}}e^{\beta A_{\tau }}|\overline{Y}_{
\tau }|^{2}
\nonumber
\\
&\leq & \Lambda _{1}\left (\|\xi \|_{\mathscr{S}^{2}_{2\beta }(
\mathbf{R})}^{2} + \left \|  \frac{\zeta }{a}\right \|  ^{2}_{
\mathscr{M}^{2}_{\beta }(\mathbf{R})} + \|g(., 0)\|^{2}_{\mathscr{M}^{2}_{\beta }(\mathbf{R}^{\ell })}\right )
\\
&&%
\hspace{-0.5cm}%
+ 2{\mathbf{E}}\operatorname*{ess
\,sup}_{\tau \in \mathcal{T}_{[0,T]}}\left |\int _{0}^{\tau }e^{\beta A_{s}}
\overline{Y}_{s-} g(s, \overline{\Theta }_{s})dB_{s}\right |+2{\mathbf{E}}
\operatorname*{ess
\,sup}_{\tau \in \mathcal{T}_{[0,T]}}\left |\int _{0}^{\tau }e^{\beta A_{s}}
\overline{Y}_{s-}\overline{Z}_{s} dW_{s} \right |
\\
&&%
\hspace{-0.5cm}
+ 2{\mathbf{E}}\operatorname*{ess
\,sup}_{\tau \in \mathcal{T}_{[0,T]}}\left |\int _{0}^{\tau }\int _{E} e^{
\beta A_{s}} \overline{Y}_{s-} \overline{U}_{s}(e)\widetilde{\mu }(ds ,de)
\right |.
\end{eqnarray*}
By the Burkh\"{o}lder--Davis--Gundy inequality, there exists $c > 0$ such that
\begin{equation*}
2{\mathbf{E}}\operatorname*{ess
\,sup}_{\tau \in \mathcal{T}_{[0,T]}}\left |\int _{0}^{\tau }e^{\beta A_{s}}
\overline{Y}_{s-}\overline{Z}_{s} dW_{s} \right |\leq \frac{1}{6}\left
\|  \overline{Y}\right \|  ^{2}_{{\mathscr{S}}^{2}_{\beta }( \mathbf{R})} + 6c^{2}
\left \|  \overline{Z}\right \|  ^{2}_{\mathscr{M}^{2}_{\beta }(\mathbf{R}^{d})},
\end{equation*}
\begin{eqnarray*}
2{\mathbf{E}}\operatorname*{ess
\,sup}_{\tau \in \mathcal{T}_{[0,T]}}\left |\int _{0}^{\tau }e^{\beta A_{s}}
\overline{Y}_{s-} g(s, \overline{\Theta }_{s})dB_{s}\right | &\leq &
\frac{1}{6}\left \|  \overline{Y}\right \|  ^{2}_{{\mathscr{S}}^{2}_{\beta }(
\mathbf{R})} + 6c^{2}\left \|  g(., \overline{\Theta })\right \|  ^{2}_{
\mathscr{M}^{2}_{\beta }(\mathbf{R}^{\ell })}
\\
&&%
\hspace{-3cm}%
\leq \;\;\frac{1}{6}\left \|  \overline{Y}\right \|  ^{2}_{{\mathscr{S}}^{2}_{\beta }( \mathbf{R})} + 12c^{2}\left (\left \|  \overline{Y}\right \|  ^{2}_{{
\mathscr{M}}^{2,a}_{\beta }( \mathbf{R})} + \alpha \left \|  \overline{Z}
\right \|  ^{2}_{\mathscr{M}^{2}_{\beta }(\mathbf{R}^{d})}\right .
\\
&&%
\hspace{-2.5cm}%
\left .+ \alpha \left \|  \overline{U}\right \|  ^{2}_{\mathscr{L}^{2}_{\beta }(\mathbf{R})}+\left \|  g(.,0)\right \|  ^{2}_{\mathscr{M}^{2}_{\beta }(\mathbf{R}^{\ell })}\right )
\end{eqnarray*}
and
\begin{equation*}
2{\mathbf{E}}\operatorname*{ess
\,sup}_{\tau \in \mathcal{T}_{[0,T]}}\left |\int _{0}^{\tau }\int _{E} e^{
\beta A_{s}} \overline{Y}_{s-} \overline{U}_{s}(e)\widetilde{\mu }(ds ,de)
\right |\leq \frac{1}{6}\left \|  \overline{Y}\right \|  ^{2}_{{
\mathscr{S}}^{2}_{\beta }( \mathbf{R})} + 6c^{2}\left \|  \overline{U}
\right \|  ^{2}_{\mathscr{L}^{2}_{\beta }(\mathbf{R})}.
\end{equation*}
Then, we derive that
%
\begin{equation}
\label{eqn13}
\left \|  \overline{Y}\right \|  ^{2}_{{\mathscr{S}}^{2}_{\beta }(
\mathbf{R})} \le \Lambda _{2}\left (\|\xi \|_{\mathscr{S}^{2}_{2
\beta }(\mathbf{R})}^{2} + \left \|  \frac{\zeta }{a}\right \|  ^{2}_{
\mathscr{M}^{2}_{\beta }(\mathbf{R})} + \|g(., 0)\|^{2}_{\mathscr{M}^{2}_{\beta }(\mathbf{R}^{\ell })}\right )
\end{equation}
where $\Lambda _{2}$ is a nonnegative constant depending on $\beta $,
$c$ and $\varepsilon $. The desired result is obtained by combining the
estimates \eqref{eqn9} and \eqref{eqn13} with
$\Lambda =\Lambda _{1}\vee \Lambda _{2}$. As a consequence, from \eqref{estim-Yn} we deduce that
\begin{equation*}
\left \|  Y^{n}\right \|  ^{2}_{{\mathscr{S}}^{2}_{\beta }(\mathbf{R})}
\le \Lambda _{2}\left (\|\xi \|_{\mathscr{S}^{2}_{2\beta }(
\mathbf{R})}^{2} + \left \|  \frac{\zeta }{a}\right \|  ^{2}_{
\mathscr{M}^{2}_{\beta }(\mathbf{R})} + \|g(., 0)\|^{2}_{\mathscr{M}^{2}_{\beta }(\mathbf{R}^{\ell })}\right ).
\end{equation*}
Using the same computations as before, we can prove that
\begin{equation*}
\left \|  Y^{n}, Z^{n}, U^{n}\right \|  ^{2}_{\mathcal{A}^{2}_{\beta }(
\mathbf{R})} \le \Lambda _{1}\left (\|\xi \|_{\mathscr{S}^{2}_{2
\beta }(\mathbf{R})}^{2} + \left \|  \frac{\zeta }{a}\right \|  ^{2}_{
\mathscr{M}^{2}_{\beta }(\mathbf{R})} + \|g(., 0)\|^{2}_{\mathscr{M}^{2}_{\beta }(\mathbf{R}^{\ell })}\right ).\qedhere
\end{equation*}
\end{proof}
%
\begin{lemma}%
\label{lem-converg}
Under the assumption \emph{\textbf{(A2)}} the sequence of processes
$(Y^{n},Z^{n},U^{n})_{n\geq 1}$ converges almost surely in
${\mathscr{B}}^{2}_{\beta }(\mathbf{R})$ for each $\beta>2$.
\end{lemma}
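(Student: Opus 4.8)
The plan is to prove that $(Y^{n},Z^{n},U^{n})_{n\ge 1}$ is a Cauchy sequence in the Banach space $\mathscr{B}^{2}_{\beta }(\mathbf{R})$ and to invoke completeness, the monotone character of the approximation furnishing in addition the almost sure (pointwise) convergence of $Y^{n}$. The delicate point is the convergence of $Z^{n}$ and $U^{n}$: one must never estimate the increments of $f_{n}$ through its Lipschitz constant, which blows up like $n$, but only through the $n$-independent linear growth bound \eqref{linear}, paired with the smallness of $Y^{n}-Y^{m}$ in $\mathscr{M}^{2,a}_{\beta }(\mathbf{R})$. As a first step, \eqref{estim-Yn} gives $Y^{1}_{t}\le Y^{n}_{t}\le Y^{n+1}_{t}\le \overline{Y}_{t}$ for all $t\le T$, so $Y^{n}_{t}\uparrow Y_{t}$ for some optional process $Y$ with $|Y^{n}_{t}|\le \Phi _{t}:=|Y^{1}_{t}|+|\overline{Y}_{t}|$; since $\Phi \in \mathscr{M}^{2,a}_{\beta }(\mathbf{R})\cap \mathscr{S}^{2}_{\beta }(\mathbf{R})$ by Lemma \ref{borne}, $Y\in \mathscr{M}^{2,a}_{\beta }(\mathbf{R})$ and, by dominated convergence,
\[
\|Y^{n}-Y^{m}\|_{\mathscr{M}^{2,a}_{\beta }(\mathbf{R})}^{2}={\mathbf{E}}\int _{0}^{T} e^{\beta A_{s}}a^{2}_{s}|Y^{n}_{s}-Y^{m}_{s}|^{2}\,ds\;\longrightarrow\;0\qquad (n,m\to +\infty ).
\]

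Next I would write the energy estimate for the differences. Fix $n,m\ge 1$, set $\overline{\Re }=\Re ^{n}-\Re ^{m}$ for $\Re \in \{Y,Z,U,K,C\}$; then, as in \eqref{eqq}, $(\overline{Y},\overline{Z},\overline{U},\overline{K},\overline{C})$ satisfies a linear equation with driver increments $f_{n}(\cdot ,\Theta ^{n})-f_{m}(\cdot ,\Theta ^{m})$ and $g(\cdot ,\Theta ^{n})-g(\cdot ,\Theta ^{m})$, null terminal value (since $Y^{n}_{T}=Y^{m}_{T}=\xi _{T}$), and reflection processes $\overline{K},\overline{C}$. Because the barrier $\xi $ is common to all $n$, the minimality conditions give, exactly as in the proof of Lemma \ref{ll}, $\langle \overline{Y}_{s-},d\overline{K}_{s}\rangle \le 0$ and $\langle \overline{Y}_{s},d\overline{C}_{s}\rangle \le 0$, while Remark \ref{rem02} disposes of the jump terms. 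Applying Lemma \ref{lem1}, taking expectations (the stochastic integrals being martingales by Proposition \ref{prop}), and using $|g(s,\Theta ^{n}_{s})-g(s,\Theta ^{m}_{s})|^{2}\le a^{2}_{s}|\overline{Y}_{s}|^{2}+\alpha (|\overline{Z}_{s}|^{2}+\|\overline{U}_{s}\|_{\lambda }^{2})$ with $\alpha <1/2$, one gets, for $\beta >2$,
\[
(\beta -1)\|\overline{Y}\|_{\mathscr{M}^{2,a}_{\beta }(\mathbf{R})}^{2}+(1-\alpha )\bigl(\|\overline{Z}\|_{\mathscr{M}^{2}_{\beta }(\mathbf{R}^{d})}^{2}+\|\overline{U}\|_{\mathscr{L}^{2}_{\beta }(\mathbf{R})}^{2}\bigr)\le 2\,{\mathbf{E}}\int _{0}^{T} e^{\beta A_{s}}|\overline{Y}_{s}|\,\bigl|f_{n}(s,\Theta ^{n}_{s})-f_{m}(s,\Theta ^{m}_{s})\bigr|\,ds .
\]

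Then comes the crucial estimate: by \eqref{linear} and $|Y^{k}_{s}|\le \Phi _{s}$,
\[
\bigl|f_{n}(s,\Theta ^{n}_{s})-f_{m}(s,\Theta ^{m}_{s})\bigr|\le 2\zeta _{s}+2\gamma _{s}\Phi _{s}+\kappa _{s}(|Z^{n}_{s}|+|Z^{m}_{s}|)+\sigma _{s}(\|U^{n}_{s}\|_{\lambda }+\|U^{m}_{s}\|_{\lambda }),
\]
and since $\gamma _{s}\le a^{2}_{s}$ and $\kappa _{s},\sigma _{s}\le a_{s}$ by \textbf{(A1.2)}, a Cauchy--Schwarz inequality in $(s,\omega )$ bounds each term by $\|\overline{Y}\|_{\mathscr{M}^{2,a}_{\beta }(\mathbf{R})}$ times a quantity bounded uniformly in $n,m$ by Lemma \ref{borne} (namely $\|\zeta /a\|_{\mathscr{M}^{2}_{\beta }}$, $\|\Phi \|_{\mathscr{M}^{2,a}_{\beta }}$, $\sup _{k}\|Z^{k}\|_{\mathscr{M}^{2}_{\beta }}$, $\sup _{k}\|U^{k}\|_{\mathscr{L}^{2}_{\beta }}$). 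Hence the right-hand side of the preceding inequality is $\le C\,\|\overline{Y}\|_{\mathscr{M}^{2,a}_{\beta }(\mathbf{R})}$ with $C$ independent of $n,m$, which tends to $0$ by the first step; therefore $\|Z^{n}-Z^{m}\|_{\mathscr{M}^{2}_{\beta }}^{2}+\|U^{n}-U^{m}\|_{\mathscr{L}^{2}_{\beta }}^{2}\to 0$. Applying Lemma \ref{lem1} once more to the same difference equation, but taking $\operatorname*{ess\,sup}_{\tau \in \mathcal{T}_{[0,T]}}$ before the expectation and using the Burkh\"{o}lder--Davis--Gundy inequality as in Proposition \ref{pro2} (the terminal term vanishing), the three stochastic-integral contributions produce $\frac{3}{4}\|Y^{n}-Y^{m}\|_{\mathscr{S}^{2}_{\beta }}^{2}$, absorbed on the left (finite by Lemma \ref{borne}), and one is left with $\|Y^{n}-Y^{m}\|_{\mathscr{S}^{2}_{\beta }}^{2}\le C'\bigl(\|Z^{n}-Z^{m}\|_{\mathscr{M}^{2}_{\beta }}^{2}+\|U^{n}-U^{m}\|_{\mathscr{L}^{2}_{\beta }}^{2}+\|Y^{n}-Y^{m}\|_{\mathscr{M}^{2,a}_{\beta }}+\|Y^{n}-Y^{m}\|_{\mathscr{M}^{2,a}_{\beta }}^{2}\bigr)\to 0$. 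So $(Y^{n},Z^{n},U^{n})_{n\ge 1}$ is Cauchy in $\mathscr{B}^{2}_{\beta }(\mathbf{R})$, hence converges there, with $Y^{n}\uparrow Y$ almost surely.

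The main obstacle is exactly this last step: the Picard-type contraction of Section \ref{s2} is not available since the Lipschitz constant of $f_{n}$ is $n\gamma _{t},n\kappa _{t},n\sigma _{t}$; the way around it is to keep $f_{n}(\cdot ,\Theta ^{n})-f_{m}(\cdot ,\Theta ^{m})$ under the $n$-free linear-growth estimate \eqref{linear} and to spend the factor $|\overline{Y}_{s}|$, whose $\mathscr{M}^{2,a}_{\beta }$-norm already vanishes by the monotone/dominated-convergence argument, the uniform $\mathscr{B}^{2}_{\beta }$-bounds of Lemma \ref{borne} being precisely what makes the Cauchy--Schwarz splitting close.
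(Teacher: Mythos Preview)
Your argument is correct and is organised differently from the paper's. Both proofs apply Lemma~\ref{lem1} with Remark~\ref{rem02} to the difference $Y^{n}-Y^{m}$ and use \textbf{(A2.3)} for the $g$-increment, but they part ways on the $f$-term. The paper splits $2Y^{n,m}_{s}\,\Delta f^{n,m}(s)\le a_{s}^{2}|Y^{n,m}_{s}|^{2}+|\Delta f^{n,m}(s)/a_{s}|^{2}$, bounds the second piece via \eqref{linear} and Lemma~\ref{borne}, and arrives at the \emph{uniform} inequality $(\beta-2)\|Y^{n,m}\|_{\mathscr{M}^{2,a}_{\beta}}^{2}+(1-\alpha)(\|Z^{n,m}\|_{\mathscr{M}^{2}_{\beta}}^{2}+\|U^{n,m}\|_{\mathscr{L}^{2}_{\beta}}^{2})\le 8\Lambda(\cdots)$, after which it asserts the Cauchy property and appeals to \eqref{estim-Yn} for the a.s.\ limit of $Y^{n}$. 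You instead \emph{first} use the monotone limit $Y^{n}\uparrow Y$ with the dominator $\Phi=|Y^{1}|+|\overline{Y}|\in\mathscr{M}^{2,a}_{\beta}$ to obtain $\|Y^{n}-Y^{m}\|_{\mathscr{M}^{2,a}_{\beta}}\to 0$ by dominated convergence, and only then run the energy identity, keeping the factor $|\overline{Y}_{s}|$ intact and pairing it by Cauchy--Schwarz with the $n$-free linear-growth bound; this produces a right-hand side $\le C\,\|\overline{Y}\|_{\mathscr{M}^{2,a}_{\beta}}\to 0$, from which the Cauchy property of $(Z^{n},U^{n})$ follows genuinely, the BDG step to $\mathscr{S}^{2}_{\beta}$ being routine. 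Your ordering makes fully explicit the step that actually closes the estimate, whereas in the paper's presentation the passage from the displayed uniform bound to the Cauchy conclusion is left implicit.
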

\begin{proof}
We know that
%
\begin{eqnarray}
\label{Y-n}
Y^{n}_{t} &=&\xi _{T} + \int _{t}^{T}f_{n}(s,\Theta ^{n}_{s})ds +
\int _{t}^{T}g(s,\Theta ^{n}_{s})dB_{s}-\int _{t}^{T}Z^{n}_{s}dW_{s}
\nonumber
\\
&&- \int _{t}^{T} \int _{E}U^{n}_{s}(e)\widetilde{\mu }(ds,de)+K^{n}_{T}-K^{n}_{t}+C^{n}_{T-}-C^{n}_{t-},
\end{eqnarray}
where $(K^{n},C^{n})$ satisfy the Skorokhod\index{Skorokhod condition} and minimality conditions.\index{minimality condition}
We define, for any integers $n, m \geq 1$,
$\Re ^{n,m} = \Re ^{n}-\Re ^{m}$ for $\Re \in \{Y, Z, U, K,C\}$,
\begin{equation*}
\Delta f^{n, m}(t) = f_{n}(t, \Theta ^{n}_{t}) - f_{m}(t,\Theta ^{m}_{t})
\;\;\text{and}\;\; \Delta g^{n, m}(t) = g(t, \Theta ^{n}_{t}) - g(t,
\Theta ^{m}_{t}),\quad t \leq T.
\end{equation*}
Then, applying Lemma \ref{lem1} together with Remark \ref{rem02}, we
get
\begin{eqnarray*}
&&%
\hspace{-1cm}%
\beta {\mathbf{E}}\int _{t}^{T}e^{\beta A_{s}}a^{2}_{s}|Y^{n, m}_{s}|^{2}ds
+ {\mathbf{E}}\int _{t}^{T}e^{\beta A_{s}}| Z^{n, m}_{s}|^{2}ds+ {\mathbf{E}}
\int _{t}^{T}e^{\beta A_{s}}\|U^{n, m}_{s}\|^{2}_{\lambda }ds
\\
&\leq &{\mathbf{E}}\int _{t}^{T} e^{\beta A_{s}}|\Delta g^{n, m}(s)|^{2}ds+2{
\mathbf{E}}\int _{t}^{T} e^{\beta A_{s}} Y^{n, m}_{s} \Delta f^{n, m}(s)ds.
\end{eqnarray*}
Using the assumption \textbf{(A2.3)} and the basic inequality
$2ab\leq \epsilon a^{2}+\frac{b^{2}}{\epsilon }$, we get
\begin{eqnarray*}%
&&(\beta - 1)\left \|  Y^{n, m}\right \|  ^{2}_{\mathscr{M}^{2,a}_{\beta }(
\mathbf{R})} + (1-\alpha )\left (\left \|  Z^{n, m}\right \|  ^{2}_{
\mathscr{M}^{2}_{\beta }(\mathbf{R}^{d})} + \left \|  U^{n, m}\right \|  ^{2}_{
\mathscr{L}^{2}_{\beta }(\mathbf{R})}\right )
\\
&\le &{\mathbf{E}}\int _{0}^{T} e^{\beta A_{s}} a_{s}^{2}|Y^{n, m}_{s}|^{2}
ds + {\mathbf{E}}\int _{0}^{T}e^{\beta A_{s}}\left |
\frac{\Delta f^{n, m}(s)}{a_{s}}\right |^{2}ds.
\end{eqnarray*}
Next, from the linear growth condition on $f_{n}$ and $f_{m}$, and by Lemma \ref{borne}, we find
\begin{eqnarray*}%
&&(\beta - 2)\left \|  Y^{n, m}\right \|  ^{2}_{\mathscr{M}^{2,a}_{\beta }(
\mathbf{R})} + (1-\alpha )\left (\left \|  Z^{n, m}\right \|  ^{2}_{
\mathscr{M}^{2}_{\beta }(\mathbf{R}^{d})} + \left \|  U^{n, m}\right \|  ^{2}_{
\mathscr{L}^{2}_{\beta }(\mathbf{R})}\right )
\\
&\le &8\Lambda \left (\|\xi \|_{\mathscr{S}^{2}_{2\beta }(\mathbf{R})}^{2}
+ \left \|  \frac{\zeta }{a}\right \|  ^{2}_{\mathscr{M}^{2}_{\beta }(
\mathbf{R})} + \|g(., 0)\|^{2}_{\mathscr{M}^{2}_{\beta }(\mathbf{R}^{
\ell })}\right ).
\end{eqnarray*}
Hence for $\beta > 2$, we deduce that $(Y^{n}, Z^{n},U^{n})$ is a Cauchy
sequence in $\mathcal{A}^{2}_{\beta }(\mathbf{R})$, so it converges in
$\mathcal{A}^{2}_{\beta }(\mathbf{R})$. On the other hand, from \eqref{estim-Yn} we deduce that there exists a process
$Y\in \mathscr{S}^{2}_{\beta }( \mathbf{R})$ such that
$Y^{n} \rightarrow Y$ a.s. as $n \rightarrow \infty $. The result follows.
\end{proof}

The main result in this section is what follows. 
%
\begin{theorem}
Under the assumptions \emph{\textbf{(A2)}}, the RBDSDEJ \eqref{backw} associated
with parameters $(f(.,\Theta ),g(.,\Theta ),\xi )$ has a minimal solution
$(Y,Z,U,K,C)\in \mathscr{B}^{2}_{\beta }(\mathbf{R})\times \mathscr{S}^{2}(
\mathbf{R})\times \mathscr{S}^{2}(\mathbf{R})$.\index{minimal solution}
\end{theorem}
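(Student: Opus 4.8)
The plan is to realize the minimal solution as the limit of the sequence $(Y^{n},Z^{n},U^{n},K^{n},C^{n})_{n\ge 1}$ introduced before Lemma~\ref{borne}, where $(Y^{n},Z^{n},U^{n})$ solves the RBDSDEJ with the \emph{Lipschitz} data $(f_{n}(\cdot,\Theta^{n}),g(\cdot,\Theta^{n}),\xi)$ furnished by Theorem~\ref{existence}, and then to check minimality by comparison with an arbitrary solution. By the monotonicity \eqref{estim-Yn} (a consequence of $f_{n}\le f_{n+1}\le F$ together with Theorem~\ref{comp-Lipsch}), the uniform estimate of Lemma~\ref{borne} and the convergence of Lemma~\ref{lem-converg}, for every fixed $\beta>2$ the sequence $(Y^{n},Z^{n},U^{n})$ converges in $\mathscr{B}^{2}_{\beta}(\mathbf{R})$ to a limit $(Y,Z,U)\in\mathscr{B}^{2}_{\beta}(\mathbf{R})$; the convergence of $Y^{n}$ is in fact monotone ($Y^{n}\uparrow Y$), and after passing to a subsequence we may assume $\Theta^{n}_{s}(\omega)\to\Theta_{s}(\omega)$ for $ds\otimes\mathbf{P}$-a.e.\ $(s,\omega)$.

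Next I would pass to the limit in \eqref{Y-n}. The terms containing $g$, $Z^{n}$ and $U^{n}$ are treated exactly as in the existence part of the proof of Theorem~\ref{existence}, via \textbf{(A2.3)}, the Burkh\"older--Davis--Gundy inequality and the $\mathscr{M}^{2}_{\beta}$- and $\mathscr{L}^{2}_{\beta}$-convergences. The only new point is the convergence of $\int_{t}^{T}f_{n}(s,\Theta^{n}_{s})\,ds$: I would use the pointwise convergence $f_{n}(s,\Theta^{n}_{s})\to f(s,\Theta_{s})$ (from \textbf{(A2.2)} and \eqref{convergence}) together with the domination $|f_{n}(s,\Theta^{n}_{s})|\le F(s,\Theta^{n}_{s})$ coming from \eqref{linear}, noting that $F(\cdot,\Theta^{n})/a\to F(\cdot,\Theta)/a$ in $\mathscr{M}^{2}_{\beta}$ (since $\Theta^{n}\to\Theta$ in $\mathscr{B}^{2}_{\beta}$ and $\zeta/a\in\mathscr{M}^{2}_{\beta}$); a dominated convergence argument with an $L^{2}$-convergent dominant then gives $f_{n}(\cdot,\Theta^{n})/a\to f(\cdot,\Theta)/a$ in $\mathscr{M}^{2}_{\beta}$, whence, by Cauchy--Schwarz and $\int_{0}^{T}a_{s}^{2}e^{-\beta A_{s}}\,ds\le\beta^{-1}$, $\mathbf{E}\sup_{t\le T}\bigl|\int_{t}^{T}f_{n}(s,\Theta^{n}_{s})\,ds-\int_{t}^{T}f(s,\Theta_{s})\,ds\bigr|^{2}\to0$.

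It then remains to identify the Mertens pair $(K,C)$. Setting $\widetilde{Y}_{\tau}=Y_{\tau}+\int_{0}^{\tau}f(s,\Theta_{s})\,ds$, this is the $\mathscr{S}^{2}_{\beta}$-limit of the strong optional supermartingales $\bigl(Y^{n}_{t}+\int_{0}^{t}f_{n}(s,\Theta^{n}_{s})\,ds\bigr)_{t\le T}$ (each a strong supermartingale by the Remark following the Definition), so $\widetilde{Y}$ is itself a strong optional supermartingale of class~(D) (uniform integrability coming from Lemma~\ref{borne}); Theorem~\ref{ap1} and Corollary~\ref{ap2} then yield a c\`adl\`ag uniformly integrable martingale $N$ and nondecreasing processes $K$ (predictable) and $C$ (adapted, purely discontinuous) with $\mathbf{E}|K_{T}|^{2}+\mathbf{E}|C_{T}|^{2}<\infty$. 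Comparing with $\widetilde{K}_{\tau}:=Y_{0}-Y_{\tau}-\int_{0}^{\tau}f(s,\Theta_{s})\,ds-\int_{0}^{\tau}g(s,\Theta_{s})\,dB_{s}+\int_{0}^{\tau}Z_{s}\,dW_{s}+\int_{0}^{\tau}\!\int_{E}U_{s}(e)\widetilde{\mu}(ds,de)$, which is the $\mathscr{S}^{2}$-limit of the $\widetilde{K}^{n}$'s, and invoking uniqueness of the decomposition (as in the proofs of Proposition~\ref{pro1} and Theorem~\ref{existence}), one recovers $(Z,U)$ in the martingale part and $K+C_{\cdot-}=\widetilde{K}$; the inequality $Y_{\tau}\ge\xi_{\tau}$ and the Skorokhod and minimality conditions for $(Y,K,C)$ follow by passing to the limit in the value-function representation of $Y^{n}$ given by Proposition~\ref{pro1} and arguing as there (using $\Delta_{+}Y_{\tau}=\mathbh{1}_{\{Y_{\tau}=\xi_{\tau}\}}\Delta_{+}Y_{\tau}$ and the optimal-stopping characterization of $K^{c},K^{d}$). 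Thus $(Y,Z,U,K,C)$ is a solution of \eqref{backw} in $\mathscr{B}^{2}_{\beta}(\mathbf{R})\times\mathscr{S}^{2}(\mathbf{R})\times\mathscr{S}^{2}(\mathbf{R})$.

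For minimality, let $(Y^{\ast},Z^{\ast},U^{\ast},K^{\ast},C^{\ast})$ be any solution of \eqref{backw}. For each fixed $n$ I would compare $Y^{n}$ with $Y^{\ast}$ by applying Lemma~\ref{lem1} to $e^{\beta A_{t}}|(Y^{n}_{t}-Y^{\ast}_{t})^{+}|^{2}$, exactly along the lines of the proof of Theorem~\ref{comp-Lipsch}: the common barrier makes the terminal term vanish, the $K$- and $C$-terms are nonpositive (thanks to $Y^{n},Y^{\ast}\ge\xi$ and the Skorokhod/minimality conditions), and for the driver one writes $f_{n}(s,\Theta^{n}_{s})-f(s,\Theta^{\ast}_{s})=[f_{n}(s,\Theta^{n}_{s})-f_{n}(s,\Theta^{\ast}_{s})]+[f_{n}(s,\Theta^{\ast}_{s})-f(s,\Theta^{\ast}_{s})]$, the second bracket being $\le0$ by $f_{n}\le f$, and absorbs the first bracket using the \emph{Lipschitz} property of $f_{n}$ (and \textbf{(A2.3)} for $g$) after choosing $\beta$ large enough. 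This yields $(Y^{n}_{t}-Y^{\ast}_{t})^{+}=0$, i.e.\ $Y^{n}_{t}\le Y^{\ast}_{t}$ a.s.\ for all $t\le T$; letting $n\to\infty$ and using $Y^{n}\uparrow Y$ gives $Y_{t}\le Y^{\ast}_{t}$, so $(Y,Z,U,K,C)$ is minimal. I expect the two delicate points to be, on the existence side, the passage to the limit in the $f_{n}$-term (where $f$ is only continuous with linear growth), and, on the minimality side, the fact that the Lipschitz constant of $f_{n}$ grows with $n$, so the comparison requires an $n$-dependent weight that must be reconciled with the fixed space in which $Y^{n}$ and $Y^{\ast}$ live — best handled by localizing along $\tau_{R}=\inf\{t:A_{t}>R\}\wedge T$ and then letting $R\to\infty$.
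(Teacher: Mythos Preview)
Your approach is essentially the paper's: build the solution as the limit of the Lipschitz approximations $(Y^{n},Z^{n},U^{n},K^{n},C^{n})$ via Lemmas~\ref{borne} and~\ref{lem-converg}, pass to the limit in \eqref{Y-n} term by term (dominated convergence for the $f_{n}$-integral, BDG for the stochastic integrals), define $\widetilde{K}$ as the residual and read off $(K,C)$, then obtain minimality from $Y^{n}\le Y^{\ast}$ and $Y^{n}\uparrow Y$.

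Two remarks on where you diverge from, and in fact improve on, the paper. First, the paper does not spell out why the limit pair $(K,C)$ inherits predictability, pure discontinuity and the Skorokhod/minimality conditions; it simply passes to the limit in \eqref{Y-n} and declares $(Y,Z,U,K,C)$ a solution. Your route through the Mertens decomposition of $\widetilde{Y}_{t}=Y_{t}+\int_{0}^{t}f(s,\Theta_{s})\,ds$ is one way to make this honest; another is to pass to the limit directly in the Skorokhod identities using the $\mathscr{S}^{2}$-convergence of $Y^{n}$ and $\widetilde{K}^{n}$ --- either way some argument is needed. Second, for minimality the paper invokes Theorem~\ref{comp-Lipsch} without comment, but that theorem as proved uses the Lipschitz property of the \emph{larger} driver, whereas here $f^{2}=f$ is not Lipschitz. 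Your splitting $f_{n}(\cdot,\Theta^{n})-f(\cdot,\Theta^{\ast})=[f_{n}(\cdot,\Theta^{n})-f_{n}(\cdot,\Theta^{\ast})]+[f_{n}(\cdot,\Theta^{\ast})-f(\cdot,\Theta^{\ast})]$ with the second bracket $\le 0$ is the right fix, and your observation that the Lipschitz constant of $f_{n}$ forces an $n$-dependent $\beta$ is well taken; the localization along $\tau_{R}=\inf\{t:A_{t}\ge R\}\wedge T$ (so that all weighted norms are equivalent on $[0,\tau_{R}]$) and then $R\to\infty$ is exactly what is needed to make the comparison rigorous for each fixed $n$, since the conclusion $Y^{n}_{t}\le Y^{\ast}_{t}$ is pointwise and independent of the auxiliary $\beta$.
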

\begin{proof}
From \eqref{estim-Yn}, it is readily seen that $(Y^{n})_{n\geq 1}$ converges
to $Y$ a.s. in $\mathscr{S}^{2}_{\beta }(\mathbf{R})$. Otherwise, due
to Lemma \ref{lem-converg}\vadjust{\goodbreak} there exist two subsequences still noted as
the whole sequences $(Z^{n})_{n\geq 1}$ and $(U^{n})_{n \geq 1}$ such that
$\Theta ^{n}=(Y^{n}, Z^{n},U^{n})$ converges to
$\Theta =(Y,Z,U) \in \mathcal{A}^{2}_{\beta }(\mathbf{R})$ as
$n \rightarrow +\infty $. By \eqref{convergence}, we have
\begin{equation*}
f_{n}(t,\Theta _{t}^{n})\xrightarrow[n \to +\infty ]{} f(t,\Theta _{t}),
\quad t \le T.
\end{equation*}
Furthermore, using the linear growth condition of $f_{n}$, it follows that
\begin{eqnarray*}
&&{\mathbf{E}}\int _{0}^{T}e^{\beta A_{s}}\left |
\frac{f_{n}(s, \Theta _{s}^{n})}{a_{s}}\right |^{2}ds
\\
&\le &4{\mathbf{E}}\bigg (\int _{0}^{T}e^{\beta A_{s}}\left |
\frac{\zeta _{s}}{a_{s}}\right |^{2}ds + \sup _{n}\int _{0}^{T}e^{
\beta A_{s}}a^{2}_{s}|Y^{n}_{s}|^{2}ds + \sup _{n}\int _{0}^{T}e^{
\beta A_{s}}|Z_{s}^{n}|^{2}ds
\\
&&+ \sup _{n}\int _{0}^{T}e^{\beta A_{s}}\|U_{s}^{n}\|^{2}_{\lambda }ds
\bigg ),
\end{eqnarray*}
and by Lemma \ref{borne} we deduce that
\begin{eqnarray*}
&&{\mathbf{E}}\int _{0}^{T}e^{\beta A_{s}}\left |
\frac{f_{n}(s,\Theta _{s}^{n})}{a_{s}}\right |^{2}ds
\\
&\le & 4\left (\Lambda \|\xi \|_{\mathscr{S}^{2}_{2\beta }(
\mathbf{R})}^{2} + (1+\Lambda ) \left \|  \frac{\zeta }{a}\right \|  ^{2}_{
\mathscr{M}^{2}_{\beta }(\mathbf{R})} + \Lambda \|g(., 0)\|^{2}_{
\mathscr{M}^{2}_{\beta }(\mathbf{R}^{\ell })}\right ).
\end{eqnarray*}
Since
\begin{equation*}
{\mathbf{E}}\left |\int _{0}^{T}f_{n}(s,\Theta _{s}^{n})ds\right |^{2}
\le \frac{1}{\beta }{\mathbf{E}}\int _{0}^{T}e^{\beta A_{s}}\left |
\frac{f_{n}(s,\Theta _{s}^{n})}{a_{s}}\right |^{2}ds,
\end{equation*}
by Lebesgue's dominated convergence theorem, we deduce that, for almost
all $t\leq T$,
\begin{equation*}
{\mathbf{E}}\left |\int _{0}^{T}(f_{n}(s,\Theta _{s}^{n})-f(s,\Theta _{s}))ds
\right |^{2}\xrightarrow[n \to +\infty ]{} 0.
\end{equation*}
We have also, for almost all $t\leq T$,
\begin{equation*}
{\mathbf{E}}\left |\int _{0}^{T}(g(s,\Theta _{s}^{n})-g(s,\Theta _{s}))dB_{s}
\right |^{2}\xrightarrow[n \to +\infty ]{} 0.
\end{equation*}
Moreover, we have
\begin{equation*}
{\mathbf{E}}\left (\sup _{0\leq t \leq T}\left |\int _{t}^{T} Z_{s}^{n}dW_{s}
- \int _{t}^{T} Z_{s}dW_{s}\right |^{2}\right )\leq {\mathbf{E}}\int _{t}^{T}e^{
\beta A_{s}}|Z_{s}^{n}-Z_{s}|^{2}ds\xrightarrow[n \to +\infty ]{} 0
\end{equation*}
and
\begin{align*}
{\mathbf{E}}\bigg (\sup _{0\leq t \leq T}\bigg |\int _{t}^{T}\int _{E} U_{s}^{n}(e)
\widetilde{\mu }(de,ds) &- \int _{t}^{T}\int _{E} U_{s}(e)
\widetilde{\mu }(de,ds)\bigg |^{2}\bigg )
\\
&\leq \;\; {\mathbf{E}}\int _{t}^{T}e^{\beta A_{s}}\|U_{s}^{n}-U_{s}\|^{2}_{\lambda }ds\xrightarrow[n \to +\infty ]{} 0.
\end{align*}
Next, for each $\tau \in \mathcal{T}_{[0,T]}$, let
\begin{eqnarray*}
\widetilde{K}_{\tau }=K_{\tau }-C_{\tau -}&=&Y_{0}-Y_{\tau }-\displaystyle
\int _{0}^{\tau }f\left (s,\Theta _{s}\right )ds-\int _{0}^{\tau }g\left (s,
\Theta _{s}\right )dB_{s}
\\
&&+\int _{0}^{\tau }Z_{s} dW_{s}+\displaystyle \int _{0}^{\tau }\int _{E} U_{s}(e)
\widetilde{\mu }(ds,de).
\end{eqnarray*}
Then, we can easy show that
$\|\widetilde{K}^{n}-\widetilde{K}\|^{2}_{\mathscr{S}^{2}}
\longrightarrow 0,\,\,\text{as}\,\,n\longrightarrow +\infty $. So, letting
${n \longrightarrow + \infty }$ in \eqref{Y-n}, we deduce that
$(Y,Z,U,K,C)$ is a solution to RBDSDEJ \eqref{backw}.

Now, let
$(Y^{\ast },Z^{\ast },U^{\ast },K^{\ast },C^{\ast })\in \mathscr{B}^{2}_{\beta }(
\mathbf{R})\times \mathscr{S}^{2}(\mathbf{R})\times \mathscr{S}^{2}(
\mathbf{R})$ be another solution to RBDSDEJ \eqref{backw}. By virtue of
Theorem \ref{comp-Lipsch}, we deduce that
\begin{equation*}
\forall n \geq 1,\quad Y^{n} \le Y^{\ast }.
\end{equation*}
Therefore, by passing to the limit $Y\leq Y^{\ast }$ 
one proves that $Y$ is the
minimal solution to RBDSDEJ \eqref{backw}.
\end{proof}



\begin{acknowledgement}[title=Acknowledgments]
The authors are greatly grateful to the editor and the referee for the
careful reading and many constructive suggestions, which significantly
contributed to improving the quality of the paper.
\end{acknowledgement}

\begin{funding}
The corresponding author (Mohamed Marzougue) declares that this research was supported by National Center
for Scientific and Technical Research (\gsponsor[id=GS1,sponsor-id=501100006319]{CNRST}), Morocco.
\end{funding}



\begin{thebibliography}{}

\bibitem{Akdim:2019}
%
\begin{barticle}
\bauthor{\bsnm{Akdim}, \binits{K.}},
\bauthor{\bsnm{Haddadi}, \binits{M.}},
\bauthor{\bsnm{Ouknine}, \binits{Y.}}:
\batitle{Strong snell envelopes and rbsdes with regulated trajectories when the barrier is a semimartingale}.
\bjtitle{Stochastics}.
\bid{doi={10.1080/17442508.2019.1635599}, mr={4085755}}
\end{barticle}
%
%
\OrigBibText
%
\begin{botherref}
\oauthor{\bsnm{Akdim}, \binits{K.}},
\oauthor{\bsnm{Haddadi}, \binits{M.}},
\oauthor{\bsnm{Ouknine}, \binits{Y.}}: Strong snell envelopes and rbsdes
with regulated trajectories when the barrier is a semimartingale. Stochastics.
doi:\doiurl{10.1080/17442508.2019.1635599}
\end{botherref}
%
\endOrigBibText
\bptok{structpyb}%
\endbibitem

\bibitem{BO:2017}
%
\begin{barticle}
\bauthor{\bsnm{Baadi}, \binits{B.}},
\bauthor{\bsnm{Ouknine}, \binits{Y.}}:
\batitle{Reflected BSDEs when the obstacle is not right-continuous in a general filtration}.
\bjtitle{ALEA Lat. Am. J. Probab. Math. Stat.} \bvolume{14}, \bfpage{201}--\blpage{218} (\byear{2017}).
\bid{doi={10.30757/alea.v14-12}, mr={3655127}}
\end{barticle}
%
%
\OrigBibText
%
\begin{barticle}
\bauthor{\bsnm{Baadi}, \binits{B.}},
\bauthor{\bsnm{Ouknine}, \binits{Y.}}:
\batitle{Reflected bsdes when the obstacle is not right-continuous in a general
filtration}. \bjtitle{ALEA, Lat. Am. J. Probab. Math. Stat.}
\bvolume{14}, \bfpage{201}--\blpage{218} (\byear{2017})
\end{barticle}
%
\endOrigBibText
\bptok{structpyb}%
\endbibitem

\bibitem{BO:2018}
%
\begin{barticle}
\bauthor{\bsnm{Baadi}, \binits{B.}},
\bauthor{\bsnm{Ouknine}, \binits{Y.}}:
\batitle{Reflected BSDEs with optional barrier in a general filtration}.
\bjtitle{Afr. Math.} \bvolume{29}, \bfpage{1049}--\blpage{1064} (\byear{2018}).
\bid{doi={10.1007/s13370-018-0600-6}, mr={3869547}}
\end{barticle}
%
%
\OrigBibText
%
\begin{barticle}
\bauthor{\bsnm{Baadi}, \binits{B.}},
\bauthor{\bsnm{Ouknine}, \binits{Y.}}:
\batitle{Reflected bsdes with optional barrier in a general filtration}.
\bjtitle{Afrika Matematika} \bvolume{29}, \bfpage{1049}--\blpage{1064}
(\byear{2018})
\end{barticle}
%
\endOrigBibText
\bptok{structpyb}%
\endbibitem

\bibitem{Bah-Elo-N'zi-1}
%
\begin{barticle}
\bauthor{\bsnm{Bahlali}, \binits{K.}},
\bauthor{\bsnm{Elouaflin}, \binits{A.}},
\bauthor{\bsnm{N'zi}, \binits{M.}}:
\batitle{Backward stochastic differential equations with stochastic monotone coefficients}.
\bjtitle{J. Appl. Math. Stoch. Anal.} \bvolume{4}, \bfpage{317}--\blpage{335} (\byear{2004}).
\bid{doi={10.1155/S1048953304310038}, mr={2108822}}
\end{barticle}
%
%
\OrigBibText
%
\begin{barticle}
\bauthor{\bsnm{Bahlali}, \binits{K.}},
\bauthor{\bsnm{Elouaflin}, \binits{A.}},
\bauthor{\bsnm{N'zi}, \binits{M.}}:
\batitle{Backward stochastic differential equations with stochastic monotone
coefficients}. \bjtitle{J. Appl. Math. Stoch.} \bvolume{4},
\bfpage{317}--\blpage{335} (\byear{2004})
\end{barticle}
%
\endOrigBibText
\bptok{structpyb}%
\endbibitem

\bibitem{Bah-Elo-N'zi-2}
%
\begin{barticle}
\bauthor{\bsnm{Bahlali}, \binits{K.}},
\bauthor{\bsnm{Elouaflin}, \binits{A.}},
\bauthor{\bsnm{N'zi}, \binits{M.}}:
\batitle{RBSDEs with stochastic monotone and polynomial growth condition}.
\bjtitle{Afr. Diaspora J. Math.} \bvolume{6}, \bfpage{55}--\blpage{73} (\byear{2008}).
\bid{mr={2397264}}
\end{barticle}
%
%
\OrigBibText
%
\begin{barticle}
\bauthor{\bsnm{Bahlali}, \binits{K.}},
\bauthor{\bsnm{Elouaflin}, \binits{A.}},
\bauthor{\bsnm{N'zi}, \binits{M.}}:
\batitle{Rbsdes with stochastic monotone and polynomial growth condition}.
\bjtitle{Afr. Diaspora J. Math.} \bvolume{6}, \bfpage{55}--\blpage{73}
(\byear{2008})
\end{barticle}
%
\endOrigBibText
\bptok{structpyb}%
\endbibitem

\bibitem{bahlali}
%
\begin{barticle}
\bauthor{\bsnm{Bahlali}, \binits{K.}},
\bauthor{\bsnm{Hassani}, \binits{M.}},
\bauthor{\bsnm{Mansouri}, \binits{B.}}:
\batitle{One barrier reflected backward doubly stochastic differential equations with continuous generator}.
\bjtitle{C. R. Acad. Sci. Paris, Ser. I} \bvolume{347}, \bfpage{1201}--\blpage{1206} (\byear{2009}).
\bid{doi={10.1016/j.crma.2009.08.001}, mr={2567003}}
\end{barticle}
%
%
\OrigBibText
%
\begin{barticle}
\bauthor{\bsnm{Bahlali}, \binits{K.}},
\bauthor{\bsnm{Hassani}, \binits{M.}},
\bauthor{\bsnm{Mansouri}, \binits{B.}}:
\batitle{One barrier reflected backward doubly stochastic differential
equations with continuous generator}.
\bjtitle{C. R. Acad. Sci. Paris, Ser. I} \bvolume{347}, \bfpage{1201}--\blpage{1206}
(\byear{2009})
\end{barticle}
%
\endOrigBibText
\bptok{structpyb}%
\endbibitem

\bibitem{berrhazi:2019}
%
\begin{barticle}
\bauthor{\bsnm{Berrhazi}, \binits{B.-E.}},
\bauthor{\bsnm{El~Fatini}, \binits{M.}},
\bauthor{\bsnm{Hilbert}, \binits{A.}},
\bauthor{\bsnm{Mrhardy}, \binits{N.}},
\bauthor{\bsnm{Pettersson}, \binits{R.}}:
\batitle{Reflected backward doubly stochastic differential equations with discontinuous barrier}.
\bjtitle{Stochastics}
(\byear{2019}).
\bid{doi={10.1080/17442508.2019.1691207}}
\end{barticle}
%
%
\OrigBibText
%
\begin{barticle}
\bauthor{\bsnm{Berrhazi}, \binits{B.-E.}},
\bauthor{\bsnm{El~Fatini}, \binits{M.}},
\bauthor{\bsnm{Hilbert}, \binits{A.}},
\bauthor{\bsnm{Mrhardy}, \binits{N.}},
\bauthor{\bsnm{Pettersson}, \binits{R.}}:
\batitle{Reflected backward doubly stochastic differential equations with
discontinuous barrier}. \bjtitle{Stochastics} (\byear{2019}). doi:\doiurl{10.1080/17442508.2019.1691207}
\end{barticle}
%
\endOrigBibText
\bptok{structpyb}%
\endbibitem

\bibitem{DM}
%
\begin{bbook}
\bauthor{\bsnm{Dellacherie}, \binits{C.}},
\bauthor{\bsnm{Meyer}, \binits{P.A.}}:
\bbtitle{Probabilit\'{e}s et Potentiel, Th\'{e}orie des Martingales},
\bcomment{Chap V--VIII},
\bpublisher{Hermann}, \blocation{Paris}
(\byear{1980}).
\bid{mr={0566768}}
\end{bbook}
%
%
\OrigBibText
%
\begin{botherref}
\oauthor{\bsnm{Dellacherie}, \binits{C.}},
\oauthor{\bsnm{Meyer}, \binits{P.A.}}: Probabilit\'{e}s et Potentiel, Th\'{e}orie
des Martingales, Chap V-VIII, Hermann, Paris (1980)
\end{botherref}
%
\endOrigBibText
\bptok{structpyb}%
\endbibitem

\bibitem{EE:2020}
%
\begin{barticle}
\bauthor{\bsnm{El~Jamali}, \binits{M.}},
\bauthor{\bsnm{El~Otmani}, \binits{M.}}:
\batitle{BSDE with RCLL reflecting barrier driven by a L\'{e}vy process}.
\bjtitle{Random Oper. Stoch. Equ.} \bvolume{2}, \bfpage{1}--\blpage{15} (\byear{2020}).
\bid{doi={10.1515/\\rose-2020-2029}, mr={4069985}}
\end{barticle}
%
%
\OrigBibText
%
\begin{barticle}
\bauthor{\bsnm{El~Jamali}, \binits{M.}},
\bauthor{\bsnm{El~Otmani}, \binits{M.}}:
\batitle{Bsde with rcll reflecting barrier driven by a l\'{e}vy process}.
\bjtitle{Random Operators and Stochastic Equations} \bvolume{2},
\bfpage{1}--\blpage{15} (\byear{2020})
\end{barticle}
%
\endOrigBibText
\bptok{structpyb}%
\endbibitem

\bibitem{EH:2003}
%
\begin{barticle}
\bauthor{\bsnm{El~Karoui}, \binits{N.}},
\bauthor{\bsnm{Hamad\`{e}ne}, \binits{S.}}:
\batitle{BSDEs and risk-sensitive control, zero-sumand nonzero-sum game
problems of stochastic functional diferential equations}.
\bjtitle{Stoch. Process. Appl.} \bvolume{107}(\bissue{1}), \bfpage{145}--\blpage{169} (\byear{2003}).
\bid{doi={10.1016/S0304-4149(03)00059-0}, mr={1995925}}
\end{barticle}
%
%
\OrigBibText
%
\begin{barticle}
\bauthor{\bsnm{El~Karoui}, \binits{N.}},
\bauthor{\bsnm{Hamad\`{e}ne}, \binits{S.}}:
\batitle{Bsdes and risk-sensitive control, zero-sumand nonzero-sum game
problems of stochastic functional diferential equations}.
\bjtitle{Stochastic Processes and their Applications}
\bvolume{107(1)}, \bfpage{145}--\blpage{169} (\byear{2003})
\end{barticle}
%
\endOrigBibText
\bptok{structpyb}%
\endbibitem

\bibitem{EQ:1997}
%
\begin{bchapter}
\bauthor{\bsnm{El~Karoui}, \binits{N.}},
\bauthor{\bsnm{Quenez}, \binits{M.C.}}:
\bctitle{Non-linear pricing theory and backward stochastic differential Equations}.
\bbtitle{Financial mathematics} \bseriesno{1656}, \bfpage{191}--\blpage{246} (\byear{1997}).
\bid{doi={10.1007/BFb0092001}, mr={1478202}}
\end{bchapter}
%
%
\OrigBibText
%
\begin{barticle}
\bauthor{\bsnm{El~Karoui}, \binits{N.}},
\bauthor{\bsnm{Quenez}, \binits{M.C.}}:
\batitle{Non-linear pricing theory and backward stochastic differential
Equations}. \bjtitle{Financial mathematics} \bvolume{1656},
\bfpage{191}--\blpage{246} (\byear{1997})
\end{barticle}
%
\endOrigBibText
\bptok{structpyb}%
\endbibitem

\bibitem{EPeQ:1997}
%
\begin{barticle}
\bauthor{\bsnm{El~Karoui}, \binits{N.}},
\bauthor{\bsnm{Peng}, \binits{S.}},
\bauthor{\bsnm{Quenez}, \binits{M.C.}}:
\batitle{Backward stochastic differential Equations in finance}.
\bjtitle{Math. Finance} \bvolume{7}, \bfpage{1}--\blpage{71} (\byear{1997}).
\bid{doi={10.1111/1467-\\9965.00022}, mr={1434407}}
\end{barticle}
%
%
\OrigBibText
%
\begin{barticle}
\bauthor{\bsnm{El~Karoui}, \binits{N.}},
\bauthor{\bsnm{Peng}, \binits{S.}},
\bauthor{\bsnm{Quenez}, \binits{M.C.}}:
\batitle{Backward stochastic differential Equations in finance}.
\bjtitle{Mathematical Finance} \bvolume{7}, \bfpage{1}--\blpage{71} (\byear{1997})
\end{barticle}
%
\endOrigBibText
\bptok{structpyb}%
\endbibitem

\bibitem{EKPPQ:1997}
%
\begin{barticle}
\bauthor{\bsnm{El~Karoui}, \binits{N.}},
\bauthor{\bsnm{Kapoudjian}, \binits{C.}},
\bauthor{\bsnm{Pardoux}, \binits{E.}},
\bauthor{\bsnm{Peng}, \binits{S.}},
\bauthor{\bsnm{Quenez}, \binits{M.C.}}:
\batitle{Reflected solutions of backward SDE's and related obstacle problems for PDE'S}.
\bjtitle{Ann. Probab.} \bvolume{25}, \bfpage{702}--\blpage{737} (\byear{1997}).
\bid{doi={10.1214/aop/1024404416}, mr={1434123}}
\end{barticle}
%
%
\OrigBibText
%
\begin{barticle}
\bauthor{\bsnm{El~Karoui}, \binits{N.}},
\bauthor{\bsnm{Kapoudjian}, \binits{C.}},
\bauthor{\bsnm{Pardoux}, \binits{E.}},
\bauthor{\bsnm{Peng}, \binits{S.}},
\bauthor{\bsnm{Quenez}, \binits{M.C.}}:
\batitle{Reflected solutions of backward sde's and related obstacle problems
for PDE'S}. \bjtitle{The Annals of Probability} \bvolume{25},
\bfpage{702}--\blpage{737} (\byear{1997})
\end{barticle}
%
\endOrigBibText
\bptok{structpyb}%
\endbibitem

\bibitem{GIOOQ:2017}
%
\begin{barticle}
\bauthor{\bsnm{Grigorova}, \binits{M.}},
\bauthor{\bsnm{Imkeller}, \binits{P.}},
\bauthor{\bsnm{Offen}, \binits{E.}},
\bauthor{\bsnm{Ouknine}, \binits{Y.}},
\bauthor{\bsnm{Quenez}, \binits{M.C.}}:
\batitle{Reflected BSDEs when the obstacle is not right-continuous and optimal stopping}.
\bjtitle{Ann. Appl. Probab.} \bvolume{27}(\bissue{5}), \bfpage{3153}--\blpage{3188} (\byear{2017}).
\bid{doi={10.1214/17-AAP1278}, mr={3719955}}
\end{barticle}
%
%
\OrigBibText
%
\begin{barticle}
\bauthor{\bsnm{Grigorova}, \binits{M.}},
\bauthor{\bsnm{Imkeller}, \binits{P.}},
\bauthor{\bsnm{Offen}, \binits{E.}},
\bauthor{\bsnm{Ouknine}, \binits{Y.}},
\bauthor{\bsnm{Quenez}, \binits{M.C.}}:
\batitle{Reflected bsdes when the obstacle is not right-continuous and optimal
stopping}. \bjtitle{Annals of Applied Probability} \bvolume{27}(\bissue{5}),
\bfpage{3153}--\blpage{3188} (\byear{2017})
\end{barticle}
%
\endOrigBibText
\bptok{structpyb}%
\endbibitem

\bibitem{GIOQ:2019}
%
\begin{barticle}
\bauthor{\bsnm{Grigorova}, \binits{M.}},
\bauthor{\bsnm{Imkeller}, \binits{E.}},
\bauthor{\bsnm{Ouknine}, \binits{Y.}},
\bauthor{\bsnm{Quenez}, \binits{M.C.}}:
\batitle{Optimal stopping with $f$-expectations: The irregular case}.
\bjtitle{Stoch. Process. Appl.} (\byear{2019}).
\bid{doi={10.1016/j.spa.2019.05.001}, mr={4058273}}
\end{barticle}
%
%
\OrigBibText
%
\begin{barticle}
\bauthor{\bsnm{Grigorova}, \binits{M.}},
\bauthor{\bsnm{Imkeller}, \binits{E.}},
\bauthor{\bsnm{Ouknine}, \binits{Y.}},
\bauthor{\bsnm{Quenez}, \binits{M.C.}}:
\batitle{Optimal stopping with $f$-expectations: The irregular case}.
\bjtitle{Stochastic Processes and their Applications} (\byear{2019}). doi:\doiurl{10.1016/j.spa.2019.05.001}
\end{barticle}
%
\endOrigBibText
\bptok{structpyb}%
\endbibitem

\bibitem{hu}
%
\begin{barticle}
\bauthor{\bsnm{Hu}, \binits{L.}}:
\batitle{Reflected backward doubly stochastic differential equations driven by a L\'{e}vy process with
stochastic Lipschitz condition}.
\bjtitle{Appl. Math. Comput.} \bvolume{219}, \bfpage{1153}--\blpage{1157} (\byear{2012}).
\bid{doi={10.1016/j.amc.2012.07.023}, mr={2981309}}
\end{barticle}
%
%
\OrigBibText
%
\begin{barticle}
\bauthor{\bsnm{Hu}, \binits{L.}}:
\batitle{Reflected backward doubly stochastic differential equations driven by
a l\'{e}vy process with stochastic lipschitz condition}.
\bjtitle{Applied Mathematics and Computation} \bvolume{219},
\bfpage{1153}--\blpage{1157} (\byear{2012})
\end{barticle}
%
\endOrigBibText
\bptok{structpyb}%
\endbibitem

\bibitem{KRS:2019}
%
\begin{barticle}
\bauthor{\bsnm{Klimsiak}, \binits{T.}},
\bauthor{\bsnm{Rzymowski}, \binits{M.}},
\bauthor{\bsnm{Slominski}, \binits{L.}}:
\batitle{Reflected BSDEs with regulated trajectories}.
\bjtitle{Stoch. Process. Appl.} \bvolume{129}, \bfpage{1153}--\blpage{1184} (\byear{2019}).
\bid{doi={10.1016/\\j.spa.2018.04.011}, mr={3926552}}
\end{barticle}
%
%
\OrigBibText
%
\begin{barticle}
\bauthor{\bsnm{Klimsiak}, \binits{T.}},
\bauthor{\bsnm{Rzymowski}, \binits{M.}},
\bauthor{\bsnm{Slominski}, \binits{L.}}:
\batitle{Reflected bsdes with regulated trajectories}.
\bjtitle{Stochastic Processes and their Applications} \bvolume{129},
\bfpage{1153}--\blpage{1184} (\byear{2019})
\end{barticle}
%
\endOrigBibText
\bptok{structpyb}%
\endbibitem

\bibitem{KQ:2012}
%
\begin{barticle}
\bauthor{\bsnm{Kobylanski}, \binits{M.}},
\bauthor{\bsnm{Quenez}, \binits{M.C.}}:
\batitle{Optimal stopping time problem in a general framework}.
\bjtitle{Electron. J. Probab.} \bvolume{17}, \bfpage{1}--\blpage{28} (\byear{2012}).
\bid{doi={10.1214/EJP.v17-2262}, mr={2968679}}
\end{barticle}
%
%
\OrigBibText
%
\begin{barticle}
\bauthor{\bsnm{Kobylanski}, \binits{M.}},
\bauthor{\bsnm{Quenez}, \binits{M.C.}}:
\batitle{Optimal stopping time problem in a general framework}.
\bjtitle{EJP} \bvolume{17}, \bfpage{1}--\blpage{28} (\byear{2012})
\end{barticle}
%
\endOrigBibText
\bptok{structpyb}%
\endbibitem

\bibitem{Len:1980}
%
\begin{bchapter}
\bauthor{\bsnm{Lenglart}, \binits{E.}}:
\bctitle{Tribus de meyer et th\'{e}orie des processus}.
\bbtitle{S\'{e}minaire de probabilit\'{e}s de Strasbourg XIV 1978/79},
\bsertitle{Lecture notes in Mathemtics} \bseriesno{784},
\bfpage{500}--\blpage{546}
(\byear{1980}).
\bid{mr={0580151}}
\end{bchapter}
%
%
\OrigBibText
%
\begin{barticle}
\bauthor{\bsnm{Lenglart}, \binits{E.}}:
\batitle{Tribus de meyer et th\'{e}orie des processus}.
\bjtitle{S\'{e}minaire de probabilit\'{e}s de Strasbourg XIV 1978/79, Lecture notes in
Mathemtics} \bvolume{784}, \bfpage{500}--\blpage{546} (\byear{1980})
\end{barticle}
%
\endOrigBibText
\bptok{structpyb}%
\endbibitem

\bibitem{M:2020}
%
\begin{barticle}
\bauthor{\bsnm{Marzougue}, \binits{M.}}:
\batitle{A note on optional Snell envelopes and reflected backward SDEs}.
\bjtitle{Statistics and Probability Letters}
(\byear{2020}).
\bid{doi={\\10.1016/j.spl.2020.108833}}
\end{barticle}
%
%
\OrigBibText
%
\begin{barticle}
\bauthor{\bsnm{Marzougue}, \binits{M.}}:
\batitle{A note on optional Snell envelopes and reflected backward SDEs}.
\bjtitle{Statistics and Probability Letters}
(\byear{2020}).
\bid{doi={\\10.1016/j.spl.2020.108833}}
\end{barticle}
%
\endOrigBibText
\bptok{structpyb}%
\endbibitem

\bibitem{ME:2017}
%
\begin{barticle}
\bauthor{\bsnm{Marzougue}, \binits{M.}},
\bauthor{\bsnm{El~Otmani}, \binits{M.}}:
\batitle{Double barrier reflected BSDEs with stochastic Lipschitz coefficient}.
\bjtitle{Mod. Stoch. Theory Appl.} \bvolume{4}, \bfpage{353}--\blpage{379} (\byear{2017}).
\bid{doi={\\10.15559/17-vmsta90}, mr={3739014}}
\end{barticle}
%
%
\OrigBibText
%
\begin{barticle}
\bauthor{\bsnm{Marzougue}, \binits{M.}},
\bauthor{\bsnm{El~Otmani}, \binits{M.}}:
\batitle{Double barrier reflected BSDEs with stochastic Lipschitz
coefficient}. \bjtitle{Modern Stochastics: Theory and Applications}
\bvolume{4}, \bfpage{353}--\blpage{379} (\byear{2017}). doi:\doiurl{10.15559/17-VMSTA90}
\end{barticle}
%
\endOrigBibText
\bptok{structpyb}%
\endbibitem

\bibitem{ME:2018}
%
\begin{barticle}
\bauthor{\bsnm{Marzougue}, \binits{M.}},
\bauthor{\bsnm{El~Otmani}, \binits{M.}}:
\batitle{Non-continuous double barrier reflected BSDEs with jumps under a stochastic Lipschitz coefficient}.
\bjtitle{Commun. Stoch. Anal.} \bvolume{12}, \bfpage{359}--\blpage{381} (\byear{2018}).
\bid{doi={10.31390/cosa.12.4.01}, mr={3957705}}
\end{barticle}
%
%
\OrigBibText
%
\begin{barticle}
\bauthor{\bsnm{Marzougue}, \binits{M.}},
\bauthor{\bsnm{El~Otmani}, \binits{M.}}:
\batitle{Non-continuous double barrier reflected BSDEs with jumps under a
stochastic Lipschitz coefficient}.
\bjtitle{Communications on Stochastic Analysis} \bvolume{12},
\bfpage{359}--\blpage{381} (\byear{2018})
\end{barticle}
%
\endOrigBibText
\bptok{structpyb}%
\endbibitem

\bibitem{ME:2019}
%
\begin{barticle}
\bauthor{\bsnm{Marzougue}, \binits{M.}},
\bauthor{\bsnm{El~Otmani}, \binits{M.}}:
\batitle{BSDEs with right upper-semicontinuous reflecting obstacle and stochastic Lipschitz coefficient}.
\bjtitle{Random Oper. Stoch. Equ.} \bvolume{27}, \bfpage{27}--\blpage{41} (\byear{2019}).
\bid{doi={10.1515/rose-2019-2005}, mr={3916509}}
\end{barticle}
%
%
\OrigBibText
%
\begin{barticle}
\bauthor{\bsnm{Marzougue}, \binits{M.}},
\bauthor{\bsnm{El~Otmani}, \binits{M.}}:
\batitle{BSDEs with right upper-semicontinuous reflecting obstacle and
stochastic Lipschitz coefficient}.
\bjtitle{Random Operators and Stochastic Equations} \bvolume{27},
\bfpage{27}--\blpage{41} (\byear{2019})
\end{barticle}
%
\endOrigBibText
\bptok{structpyb}%
\endbibitem

\bibitem{ME:2020}
%
\begin{barticle}
\bauthor{\bsnm{Marzougue}, \binits{M.}},
\bauthor{\bsnm{El~Otmani}, \binits{M.}}:
\batitle{Reflected BSDEs with jumps and two RCLL barriers under stochastic Lipschitz coefficient}.
\bjtitle{Commun. Stat., Theory Methods} (\byear{2020}).
\bid{doi={10.1080/03610926.2020.1738491}}
\end{barticle}
%
%
\OrigBibText
%
\begin{barticle}
\bauthor{\bsnm{Marzougue}, \binits{M.}},
\bauthor{\bsnm{El~Otmani}, \binits{M.}}:
\batitle{Reflected BSDEs with jumps and two rcll barriers under stochastic
Lipschitz coefficient}.
\bjtitle{Communications in Statistics -- Theory and Methods} (\byear{2020}).
doi:\doiurl{10.1080/03610926.2020.1738491}
\end{barticle}
%
\endOrigBibText
\bptok{structpyb}%
\endbibitem

\bibitem{owo}
%
\begin{barticle}
\bauthor{\bsnm{Owo}, \binits{J.M.}}:
\batitle{Backward doubly stochastic differential equations with stochastic Lipschitz condition}.
\bjtitle{Stat. Probab. Lett.} \bvolume{96}, \bfpage{75}--\blpage{84} (\byear{2015}).
\bid{doi={10.1016/\\j.spl.2014.09.012}, mr={3281750}}
\end{barticle}
%
%
\OrigBibText
%
\begin{barticle}
\bauthor{\bsnm{Owo}, \binits{J.M.}}:
\batitle{Backward doubly stochastic differential equations with stochastic
lipschitz condition}. \bjtitle{Statistics and Probability Letters}
\bvolume{96}, \bfpage{75}--\blpage{84} (\byear{2015})
\end{barticle}
%
\endOrigBibText
\bptok{structpyb}%
\endbibitem

\bibitem{owo18}
%
\begin{barticle}
\bauthor{\bsnm{Owo}, \binits{J.M.}}:
\batitle{Backward doubly SDEs with continuous and stochastic linear growth coefficients}.
\bjtitle{Random Oper. Stoch. Equ.} \bvolume{26}, \bfpage{175}--\blpage{184} (\byear{2018}).
\bid{doi={\\10.1515/rose-2018-0014}, mr={3849703}}
\end{barticle}
%
%
\OrigBibText
%
\begin{barticle}
\bauthor{\bsnm{Owo}, \binits{J.M.}}:
\batitle{Backward doubly sdes with continuous and stochastic linear growth
coefficients}. \bjtitle{Random Operators and Stochastic Equations}
\bvolume{26}, \bfpage{175}--\blpage{184} (\byear{2018})
\end{barticle}
%
\endOrigBibText
\bptok{structpyb}%
\endbibitem

\bibitem{Par-Peng90}
%
\begin{barticle}
\bauthor{\bsnm{Pardoux}, \binits{E.}},
\bauthor{\bsnm{Peng}, \binits{S.}}:
\batitle{Adapted solution of a backward stochastic differential equation}.
\bjtitle{Syst. Control Lett.} \bvolume{14}, \bfpage{55}--\blpage{61} (\byear{1990}).
\bid{doi={10.1016\\/0167-6911(90)90082-6}, mr={1037747}}
\end{barticle}
%
%
\OrigBibText
%
\begin{barticle}
\bauthor{\bsnm{Pardoux}, \binits{E.}},
\bauthor{\bsnm{Peng}, \binits{S.}}:
\batitle{Adapted solution of a backward stochastic differential equation}.
\bjtitle{Systems and Control Letters} \bvolume{14}, \bfpage{55}--\blpage{61}
(\byear{1990})
\end{barticle}
%
\endOrigBibText
\bptok{structpyb}%
\endbibitem

\bibitem{Par-Peng92}
%
\begin{bchapter}
\bauthor{\bsnm{Pardoux}, \binits{E.}},
\bauthor{\bsnm{Peng}, \binits{S.}}:
\bctitle{Backward stochastic differential Equations and quasilinear parabolic partial differential Equations}.
\bbtitle{Stochastic partial differential Equations and their applications}
\bseriesno{176},
\bfpage{200}--\blpage{217} (\byear{1992})
\end{bchapter}
%
%
\OrigBibText
%
\begin{barticle}
\bauthor{\bsnm{Pardoux}, \binits{E.}},
\bauthor{\bsnm{Peng}, \binits{S.}}:
\batitle{Backward stochastic differential Equations and quasilinear parabolic
partial differential Equations}.
\bjtitle{Stochastic partial differential Equations and their applications}
\bvolume{176}, \bfpage{200}--\blpage{217} (\byear{1992})
\end{barticle}
%
\endOrigBibText
\bptok{structpyb}%
\endbibitem

\bibitem{Par-Peng94}
%
\begin{barticle}
\bauthor{\bsnm{Pardoux}, \binits{E.}},
\bauthor{\bsnm{Peng}, \binits{S.}}:
\batitle{Backward doubly stochastic differential equations and semilinear PDEs}.
\bjtitle{Probab. Theory Relat. Fields} \bvolume{98}, \bfpage{209}--\blpage{227} (\byear{1994}).
\bid{doi={\\10.1007/BF01192514}, mr={1258986}}
\end{barticle}
%
%
\OrigBibText
%
\begin{barticle}
\bauthor{\bsnm{Pardoux}, \binits{E.}},
\bauthor{\bsnm{Peng}, \binits{S.}}:
\batitle{Backward doubly stochastic differential equations and semilinear
PDEs}. \bjtitle{Probability Theory and Related Fields} \bvolume{98},
\bfpage{209}--\blpage{227} (\byear{1994})
\end{barticle}
%
\endOrigBibText
\bptok{structpyb}%
\endbibitem

\bibitem{sow-sagna}
%
\begin{barticle}
\bauthor{\bsnm{Sow}, \binits{A.B.}},
\bauthor{\bsnm{Sagna}, \binits{Y.}}:
\batitle{BDSDE with poisson jumps under stochastic Lipschitz and linear growth conditions}.
\bjtitle{Stoch. Dyn.} \bvolume{18}, \bfpage{24} (\byear{2018}).
\bid{doi={10.1142/\\S0219493718500399}, mr={3853267}}
\end{barticle}
%
%
\OrigBibText
%
\begin{barticle}
\bauthor{\bsnm{Sow}, \binits{A.B.}},
\bauthor{\bsnm{Sagna}, \binits{Y.}}:
\batitle{Bdsde with poisson jumps under stochastic lipschitz and linear growth
conditions}. \bjtitle{Stochastics and Dynamics} \bvolume{18},
\bfpage{24} (\byear{2018})
\end{barticle}
%
\endOrigBibText
\bptok{structpyb}%
\endbibitem

\end{thebibliography}
\end{document}